\documentclass[11pt,twoside]{amsart}
\usepackage{amsfonts}
\usepackage[latin9]{inputenc}
\usepackage{amsmath}
\usepackage{amssymb}
\usepackage{breqn}
\usepackage{url}
\usepackage{graphicx}
\usepackage[pdfstartview=FitH]{hyperref}
\usepackage{amsthm}
\usepackage{hyperref}
\usepackage{url}
\usepackage{cleveref}
\usepackage{verbatim}
\usepackage{bbm}
\usepackage{enumitem}

\crefformat{section}{\S#2#1#3} 
\crefformat{subsection}{\S#2#1#3}
\crefformat{subsubsection}{\S#2#1#3}

\newcommand{\Isom}{\operatorname{Isom}}
\newcommand{\cpnum}{\operatorname{cpnum}}

\makeatletter
\begin{document}
\newtheorem{theorem}{Theorem}[section]
\newtheorem{lemma}[theorem]{Lemma}
\newtheorem{claim}[theorem]{Claim}
\newtheorem{proposition}[theorem]{Proposition}
\newtheorem{corollary}[theorem]{Corollary}
\theoremstyle{definition}
\newtheorem{definition}[theorem]{Definition}
\newtheorem{observation}[theorem]{Observation}
\newtheorem{example}[theorem]{Example}
\newtheorem{remark}[theorem]{Remark}

\title{Property (T) for random groups in the density model with $d > \frac{1}{4}$}

\author{Izhar Oppenheim}
\address{Department of Mathematics, Ben-Gurion University of the Negev, Be'er Sheva,  Israel}
\email{izharo@bgu.ac.il}



\maketitle
\begin{abstract}
We prove that random groups in the Gromov density model (under a certain divisibility condition) have property (T) asymptotically almost surely for any density $d > \frac{1}{4}$.  This established $\frac{1}{4}$ as the threshold for property (T), since it was shown by Ashcroft that they do not have property (T) for  $d < \frac{1}{4}$.
\end{abstract} 

\section{Introduction}

This paper concerns the density at which a random group acquires Kazhdan's property~\((T)\). We prove that property~\((T)\) holds asymptotically almost surely at every density strictly greater than~\(1/4\), when the relator length tends to infinity through multiples of four. The proof is based on a criterion for property~\((T)\) that combines spectral estimates for a graph associated to a presentation with identities satisfied by harmonic \(1\)-cocycles.

We recall the density model for random groups:
\begin{definition}[Gromov density model]
Let $n \in \mathbb{N}, n \geq 2$ and $0 \leq d \leq 1$ be constants and $\ell \in \mathbb{N}$ be a parameter.  A random group in the Gromov density model $\mathcal{D} (n,\ell,d)$ is a group $\Gamma = \langle S  \vert R \rangle$ where $S = \lbrace s_1^{\pm},...,s_n^{\pm} \rbrace$  and $R$ is a set of relators of length $\ell$ (in $S$) randomly chosen from the set 
$$\lbrace R \text{ is a set of cyclically reduced relators of length } \ell :  \vert R \vert = \lfloor (2n-1)^{d \ell} \rfloor \rbrace$$ 
with uniform probability.  We denote a random group in this model by $\Gamma \in \mathcal{D} (n,\ell,d)$.

For a group property $P$, we say that $P$ holds asymptotically almost surely (a.a.s.) in $\mathcal{D} (n,\ell,d)$ if 
$$\lim_{\ell \rightarrow \infty} \mathbb{P} (\Gamma \in \mathcal{D} (n,\ell,d) \text{ has property } P) = 1.$$
\end{definition}

The appearance of property~\((T)\) in this model was established above density~\(1/3\) by \.{Z}uk \cite{Zuk}, with a complete proof given by Kotowski--Kotowski~\cite{KK}.  In the exact-length formulation, their argument applies when the relator length is divisible by three.
Ashcroft \cite{Ash23} subsequently proved the result without this restriction.  In the opposite direction, Ashcroft \cite{Ash2} proved that random groups at density~\(d<1/4\) a.a.s. admit an action with unbounded orbits on a finite-dimensional \(\mathrm{CAT}(0)\) cube complex and hence
do not have property~\((T)\). A conjecture of Przytycki predicts that property~\((T)\) holds a.a.s. for every \(d>1/4\); see~\cite{Mun22}.

Our main result establishes this conclusion for relator lengths divisible by four: 
\begin{theorem}
Fix $n \geq 2$ and $d \in (\frac{1}{4},1)$.  Then
$$\lim_{k \rightarrow \infty} \mathbb{P} (\Gamma \in \mathcal{D} (n,4k,d) \text{ has property } (T)) = 1.$$
\end{theorem}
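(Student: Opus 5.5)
We follow the strategy of \.{Z}uk and Kotowski--Kotowski, adapted from the triangular to the square setting. The idea is to pass from the random group $\Gamma \in \mathcal{D}(n,4k,d)$ to an auxiliary group whose relators have length four and whose spectral data can be controlled.

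\textbf{Reduction to length four.} Let $S_k$ be the set of reduced words of length $k$ in $S$, and let $\Gamma' = \langle S_k \mid R' \rangle$. Each relator $r \in R$ of length $4k$ is cut into four subwords $r = w_1 w_2 w_3 w_4$ with $w_i \in S_k$, and $R'$ consists of the resulting length-four relators $w_1 w_2 w_3 w_4$ in the new generators. Sending each generator $w \in S_k$ to the corresponding element of $\Gamma$ gives a homomorphism $\Gamma' \to \Gamma$. Its image is the subgroup generated by words of length $k$, which has index at most $2$ in $\Gamma$. Property $(T)$ passes to quotients and to finite-index overgroups, so it suffices to show that $\Gamma'$ has property $(T)$ a.a.s.

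Under the uniform model, $R'$ is approximately a random set of about $m^{4d}$ square relators on an alphabet of size $m \approx (2n-1)^k$. The local density is therefore $d > 1/4$: the number of relators $m^{4d}$ is much larger than $m$.

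\textbf{Spectral criterion.} For triangular presentations one uses the link graph $L(S)$: its vertices are the generators, and there is an edge for each consecutive pair of letters in a relator. \.{Z}uk's criterion says $\lambda_1(L) > 1/2$ implies $(T)$. For square presentations this criterion is insufficient, since the link of a square complex has no such criterion with a usable threshold. This is where the abstract's ``identities for harmonic $1$-cocycles'' enter.

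Take a unitary representation $\pi$ with a nontrivial cocycle $c$, and assume by Garland-type averaging that $c$ is harmonic. Each square relator $w_1w_2w_3w_4$ gives a cocycle identity $c(w_1)+\pi(w_1)c(w_2)+\dots = 0$. We then build a graph $G$ on $S_k$ with an edge $w_1 \sim w_3$, the ``opposite'' pair, weighted by the relator, in addition to the adjacent-pair graph $L$. Squaring the cocycle identity and summing over relators expresses $\sum\|c(s)\|^2$ through the quadratic forms of $L$ and $G$. If $\lambda_1(L)$ and $\lambda_1(G)$ are both close to $1$, the resulting inequality forces $c=0$. We would take this as the paper's criterion, stated earlier or later: a sufficiently strong spectral gap on the associated graphs implies $(T)$.

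\textbf{Random spectral gap and the main obstacle.} It remains to show that these graphs on $m$ vertices, with about $m^{4d}$ random edge contributions, are near-Ramanujan expanders a.a.s. Average degree is $m^{4d-1} \to \infty$ since $d > 1/4$, so Kotowski--Kotowski-style concentration, for example a Friedman or Feige--Ofek type bound for random graphs with edge dependencies, gives $\lambda_1 \to 1$.

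We expect the main obstacle to be handling the dependencies. The subwords of one relator are correlated: cyclic reduction couples the endpoints, and the edges from one relator are not independent. The threshold is also tight, because the degree $m^{4d-1}$ is only polylogarithmically useful near $d = 1/4$. We would first condition on the multiset of first letters and use the Kotowski--Kotowski technique of passing to a model with independent relators, followed by a union-bound or trace-method argument, to obtain $\lambda_1 \geq 1 - o(1)$.
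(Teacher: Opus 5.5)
Your proposal has a genuine gap, and it sits in the step that carries all the weight: the criterion in terms of $\lambda_1(L)$ and $\lambda_1(G)$. You assume this criterion rather than prove it, and it cannot be derived the way you sketch. For a square relator $w_1w_2w_3w_4$, write $a_{i+1}=-\pi(w_1\cdots w_i)c(w_{i+1})$. In the identity $0=\Vert a_1+a_2+a_3+a_4\Vert^2$, the adjacent cross terms are $\langle a_i,a_{i+1}\rangle=-\langle c(w_i^{-1}),c(w_{i+1})\rangle$, using $\pi(g^{-1})c(g)=-c(g^{-1})$. These do form a quadratic form on the link. The opposite cross terms, however, are $\langle a_1,a_3\rangle=-\langle c(w_1^{-1}),\pi(w_2)c(w_3)\rangle$ and $\langle a_2,a_4\rangle=-\langle c(w_2^{-1}),\pi(w_3)c(w_4)\rangle$. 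They still contain the representation, so they are not the quadratic form of any finite graph evaluated on $w\mapsto c(w)$.

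What survives without reference to $\pi$ is the one-sided inequality the paper uses. It is obtained by discarding $\Vert a_1+a_3\Vert^2+\Vert a_2+a_4\Vert^2\ge 0$, and it reads $\sum_i\langle c(w_i^{-1}),c(w_{i+1})\rangle\ge 0$, i.e. $\langle (M\otimes I_{\mathcal{H}})\phi_c,\phi_c\rangle\ge 0$. No spectral gap can contradict an inequality of the form ``$\ge 0$''. These graphs have positive nontrivial eigenvalues (the random ones are of order $D^{-1/2}$). So one must show that $\phi_c$ has a definite component in a strictly negative eigenspace. This is the same reason square complexes admit no Garland--\.{Z}uk type criterion: the standard presentation complex of $F_a\times F_b$ has link $K_{2a,2b}$, the best possible spectral gap, yet the group lacks (T).

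The reduction to $\Gamma'=\langle S_k\mid R'\rangle$ is where the decisive information is lost. In $\Gamma'$ the elements of $S_k$ are free letters. So $\Gamma'$ is essentially a random group in Odrzyg\'o\'zd\'z's square model over $S_k$ at density $d$, with a mild first-letter constraint. If I recall Odrzyg\'o\'zd\'z's results correctly, property (T) fails in that model below density $\frac13$. You should therefore not expect $\Gamma'$ to have (T) for $d\in(\frac14,\frac13)$, even though $\Gamma$ does.

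The paper never passes to $\Gamma'$. It works with a cocycle of $\Gamma$ that is harmonic with respect to the original alphabet $S$, and it uses the identity $\sum_{w\in W_{s,\ell/4}}c(w)=\frac{(2n-1)^{\ell/4}-1}{2n-2}\,c(s)$, which has no counterpart in $\Gamma'$. This identity shows that the projection of $\phi_c$ onto $\mathcal{W}$ is approximately $L\,c(s)$ on each block; here $\mathcal{W}$ is the span of the first-letter indicators $\mathbbm{1}_{V_i^s}$, and $L$ is the paper's constant. No edge joins words with the same first letter, and $\sum_{s'\neq s}c(s')=-c(s)$. Hence the random walk acts on this component by about $-\frac{1}{2n-1}$.

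Since $\Vert c(w)\Vert\le\frac{\ell}{4}\max_s\Vert c(s)\Vert$, this negative component is only guaranteed to carry a fraction of order $\ell^{-2}$ of $\Vert\phi_c\Vert^2$. The complement must therefore satisfy $\Vert M(I-P_{\mathcal{W}})\Vert=o(\ell^{-2})$. This is why the paper needs degree concentration and spectral gaps of the blocks $(V_{s,s'},E_{s,s'})$ to precision $\ell^{-10}$, not merely $\lambda_1\to 1$. That precision is available because the relevant degree $D=(2n-1)^{(d-\frac14)\ell}/2n$ is exponential in $\ell$ for fixed $d>\frac14$. This contradicts your remark that the degree is only polylogarithmically useful.
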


Together with Ashcroft's result below density~\(1/4\), this identifies
\(1/4\) as the optimal density threshold for property~\((T)\) along the
sequence of relator lengths divisible by four. No assertion is made at the
critical density~\(d=1/4\).

The idea of the proof is to associate to the presentation a graph and deduce property (T) based on the behaviour of a random walk on the graph with values of a $1$-cocycle.  This is very much in the spirit of \.{Z}uk criterion,  however,  unlike \.{Z}uk criterion we cannot deduce property (T) based only on the spectral properties of the graph, since we need that the random walk evaluated on the $1$-cocycle will concentrate on eigenvalues that are strictly less than $0$.  What solves this issue is using the properties of harmonic $1$-cocycles to show that this is indeed the case.

\paragraph{\textbf{Organization.}}
\Cref{section: Random walks on multigraphs} establishes the spectral estimates for multigraphs used in the
proof.  \Cref{section: Property (T) criterion} develops the criterion for property~\((T)\), including the
harmonic-cocycle identities and the estimates for the first-letter
projection and its complement.  \Cref{section: Property (T) for random groups in the density model} verifies the hypotheses in the
binomial model and deduces the main theorem in the density model.

\paragraph{\textbf{LLM acknowledgement:}} In preparing this paper, I made meaningful use of ChatGPT 6 beyond routine grammar and language editing, including assistance in exploring arguments, checking calculations, and discussing mathematical ideas. All mathematical claims and proofs in the final manuscript were independently checked and remain the responsibility of the author.

\section{Random walks on multigraphs}
\label{section: Random walks on multigraphs}
\subsection{Definitions}
We recall that a multigraph $(V,E)$ is a generalization of a graph in which there can be multiple edges between two vertices.  Throughout we assume that $(V,E)$ is finite and does not have loops or isolated vertices.  For a multigraph $(V,E)$ and $v, u \in V$,  we denote $m (\lbrace u,v\rbrace)$ to be the number of edges between $u$ and $v$.   If $m (\lbrace u,v\rbrace) \geq 1$,  we say that $u$ and $v$ are \textit{neighbors} and denote $v \sim u$.  We further define the \textit{degree of $v \in V$} to be $m (v) = \sum_{u \in V} m (\lbrace u,v\rbrace)$.  For a non-empty set of vertices $U \subseteq V$,  we also define $m (U) = \sum_{u \in U} m (u)$.  Although $E$ is not a set but a multiset,  below we will abuse notation and write $\lbrace u,v \rbrace \in E$ when $m (\lbrace u,v\rbrace) \geq 1$.   In sums indexed by \(\{u,v\}\in E\), each distinct unordered adjacent pair is included once; its multiplicity is supplied by the factor \(m(\{u,v\})\).

With these notations we define $\ell^2 (V ) = \lbrace \phi : V \rightarrow \mathbb{R} \rbrace$ with the inner product 
$$\langle \phi, \psi \rangle = \sum_{v \in V} m (v) \phi (v) \psi (v).$$
The random walk operator $M : \ell^2 (V ) \rightarrow \ell^2 (V )$ is defined by 
$$M \phi (v) = \frac{1}{m (v)} \sum_{u \sim v} m (\lbrace u,v\rbrace) \phi (u) .$$
The following facts and notations are standard:
\begin{itemize}
\item The operator $M$ is self-adjoint with respect to the inner product of $\ell^2 (V )$.
\item The operator $M$ has $\Vert M \Vert = 1$.
\item For every $\phi \in \ell^2 (V )$ it holds that 
$$\langle M \phi , \phi \rangle = 2 \sum_{\lbrace v,u \rbrace \in E}  m (\lbrace u,v \rbrace ) \phi (v) \phi (u).$$
\item For the constant function $\mathbbm{1}$,  $M \mathbbm{1} = \mathbbm{1}$,  i.e., it is an eigenfunction of the eigenvalue $1$.
\item The eigenvalue $1$ has multiplicity $1$ if and only if $(V,E)$ is connected. 
\item We denote $\lambda (V,E)$ to be the second largest eigenvalue of $M$,  i.e.,  the largest eigenvalue of $\left.  M \right\vert_{(span \lbrace \mathbbm{1} \rbrace)^{\perp}}$.
\item If $(V,E)$ is bipartite with sides $V_0, V_1$,  then $-1$ is an eigenvalue of $M$ and $M (\mathbbm{1}_{V_0} - \mathbbm{1}_{V_1}) =  - (\mathbbm{1}_{V_0} - \mathbbm{1}_{V_1})$.  Moreover,  if $(V,E)$ is bipartite and connected,  then for every $\phi \in \ell^2 (V)$,  if $\phi \perp \mathbbm{1}_{V_0}$ and $\phi \perp \mathbbm{1}_{V_1} $, then $\Vert M \phi \Vert \leq \lambda (V,E) \Vert \phi \Vert$.   
\end{itemize}

Below,  we will be interested in spaces of the form $\ell^2 (V ; \mathcal{H}) = \lbrace \phi : V \rightarrow \mathcal{H} \rbrace$,  where $\mathcal{H}$ is a real Hilbert space and the inner product is defined as 
$$\langle \phi, \psi \rangle = \sum_{v \in V} m (v) \langle \phi (v) , \psi (v) \rangle_{\mathcal{H}}.$$
Every linear operator $T : \ell^2 (V) \rightarrow \ell^2 (V)$ can be extended to $T \otimes I_{\mathcal{H}} : \ell^2 (V ; \mathcal{H}) \rightarrow \ell^2 (V ; \mathcal{H})$.  Explicitly,  if $(T(v,u))_{v,u \in V}$ is the matrix of $T$,  then 
$$(T \otimes I_{\mathcal{H}}) \phi (v) = \sum_{u \in V}  T (v,u) \phi (u),  \forall \phi \in \ell^2 (V ; \mathcal{H}).$$

Two facts will be useful below:
\begin{itemize}
\item For every $\phi \in \ell^2 (V ; \mathcal{H})$, 
$$\langle (M \otimes I_{\mathcal{H}}) \phi , \phi \rangle = 2 \sum_{\lbrace v,u \rbrace \in E} m (\lbrace u,v \rbrace ) \langle \phi (v), \phi (u) \rangle .$$
\item For every linear operator $T : \ell^2 (V) \rightarrow \ell^2 (V)$, the following inequality holds for the operator norms: $\Vert T \otimes I_{\mathcal{H}} \Vert \leq \Vert T \Vert$.
\end{itemize}

\subsection{Spectral bound via decomposition}

Given $\varepsilon >0$,  we will say that $(V,E)$ is \textit{$\varepsilon$-almost regular} if there exists $\underline{d}$ such that
$$\max_{v \in V} \left\vert m (v) - \underline{d} \right\vert \leq \varepsilon d.$$

Let $(V,E)$ be a connect multigraph without loops.  We say that the subgraphs $(V_1,E_1),...,(V_k,E_k)$ form a \textit{disjoint partition of $(V,E)$} if for every $i$,  $V_i = V$ and $E$ is partitioned into $E = E_1 \sqcup ... \sqcup E_k$.  Again,  we note that treating $E$ as a set is an abuse of notation.  A more accurate way is writing that there are functions $m_i, i=1,...,k$ on unordered pairs of vertices $\lbrace u,v \rbrace$ such that for every such pair
$$m (\lbrace u,v \rbrace) = \sum_{i=1}^k m_i (\lbrace u,v \rbrace).$$

\begin{proposition}
\label{prop: M phi decomp}
Let $(V,E)$ be a connected multigraph without loops and isolated vertices and $(V_1,E_1),...,(V_k,E_k)$ a disjoint partition of $(V,E)$ into graphs with no isolated vertices.   Then for every $\phi \in \ell^2 (V)$,
$$\langle M \phi, \phi \rangle_{\ell^2 (V)} = \sum_{i=1}^k \langle M_i \phi,  \phi \rangle_{\ell^2 (V_i)} .$$
\end{proposition}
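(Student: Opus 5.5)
The plan is to reduce both sides to edge sums using the standard identity recalled above,
$$\langle M\phi,\phi\rangle = 2\sum_{\{u,v\}\in E} m(\{u,v\})\,\phi(u)\phi(v),$$
applied once to $(V,E)$ and once to each $(V_i,E_i)$. Here each $\ell^2(V_i)$ carries the inner product weighted by the degrees $m_i(v)=\sum_u m_i(\{u,v\})$, and $M_i$ is the random walk operator of $(V_i,E_i)$. The hypothesis that each $(V_i,E_i)$ has no isolated vertices guarantees $m_i(v)>0$ for all $v\in V_i=V$. This makes $M_i$ well defined and the weighted inner product on $\ell^2(V_i)$ nondegenerate, so the identity applies to each piece.

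First I will verify the edge-sum identity directly, since everything rests on it. From the definition of $M$,
$$\langle M\phi,\phi\rangle = \sum_v m(v)\,\phi(v)\cdot\frac{1}{m(v)}\sum_{u\sim v} m(\{u,v\})\,\phi(u) = \sum_v\sum_{u\sim v} m(\{u,v\})\,\phi(u)\phi(v).$$
The degree weight cancels the normalization. Each unordered pair is counted twice in this ordered double sum, which gives the factor $2$. Loops are excluded, so there are no diagonal terms to worry about. The same computation holds verbatim for each $M_i$ with $m_i$ in place of $m$.

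Next I will sum the identities over $i$ and use $m(\{u,v\})=\sum_i m_i(\{u,v\})$ for every pair. This gives
$$\sum_{i=1}^k \langle M_i\phi,\phi\rangle_{\ell^2(V_i)} = 2\sum_{\{u,v\}}\Big(\sum_i m_i(\{u,v\})\Big)\phi(u)\phi(v) = \langle M\phi,\phi\rangle_{\ell^2(V)}.$$
Pairs that are edges of $E$ but not of $E_i$ contribute zero in the $i$-th sum, so extending every sum over all unordered pairs is harmless.

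There is no real obstacle in this argument. The only point needing care is bookkeeping: the inner products on $\ell^2(V_i)$ use the degrees $m_i$, not $m$, so the identity is not simply linearity of $M$ in the edge weights. The identity holds because the degree normalization cancels exactly against the inner-product weights.
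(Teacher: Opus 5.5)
Your proposal is correct and follows the paper's proof exactly. Both sides are rewritten via the edge-sum identity $\langle M\phi,\phi\rangle = 2\sum_{\{u,v\}\in E} m(\{u,v\})\phi(u)\phi(v)$, and then the multiplicities are split as $m(\{u,v\})=\sum_i m_i(\{u,v\})$; your direct check of that identity and of why each $M_i$ is well defined only fills in details the paper leaves implicit.
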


\begin{proof}
\begin{align*}
& \langle M \phi, \phi \rangle_{\ell^2 (V)} = 
2 \sum_{\lbrace u,v \rbrace \in E} m (\lbrace u,v \rbrace )  \phi (v) \phi (u) = \\
& \sum_{i=1}^k  2 \sum_{\lbrace u,v \rbrace \in E_i} m_i (\lbrace u,v \rbrace )  \phi (v) \phi (u) = \sum_{i=1}^k \langle M_i \phi,  \phi \rangle_{\ell^2 (V_i)} 
\end{align*}
\end{proof}

\begin{proposition}
\label{prop: decomp prop}
Let $(V,E)$ be a connected multigraph without loops and $(V_1,E_1),...,(V_k,E_k)$ a disjoint partition of $(V,E)$.  Assume that there are $1 > \varepsilon >0$ and $\lambda \geq 0$ such that for every $1 \leq i \leq k$,  $(V_i,E_i)$ is a connected $\varepsilon$-almost regular multigraph with $\lambda ((V_i,E_i)) \leq \lambda$.  Then 
$$\lambda ((V,E)) \leq \lambda + \frac{16 \varepsilon^2}{(1-\varepsilon)^4} .$$
\end{proposition}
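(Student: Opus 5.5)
The plan is to bound the Rayleigh quotient $\langle M\phi,\phi\rangle/\Vert\phi\Vert^2$ over $\phi\perp\mathbbm{1}$ in $\ell^2(V)$. Fix such a $\phi$. By Proposition~\ref{prop: M phi decomp}, $\langle M\phi,\phi\rangle=\sum_i\langle M_i\phi,\phi\rangle_{\ell^2(V_i)}$. Since $m(v)=\sum_i m_i(v)$, we also have $\sum_i\Vert\phi\Vert_{\ell^2(V_i)}^2=\Vert\phi\Vert^2$.

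For each $i$, we decompose $\phi=c_i\mathbbm{1}+\psi_i$ with $\psi_i\perp\mathbbm{1}$ in $\ell^2(V_i)$. Here
$$c_i=\frac{\sum_v m_i(v)\phi(v)}{m_i(V)}.$$
Since $M_i\mathbbm{1}=\mathbbm{1}$ and $M_i$ is self-adjoint, the cross terms vanish. Using $\lambda((V_i,E_i))\le\lambda$, we get
$$\langle M_i\phi,\phi\rangle_i=c_i^2m_i(V)+\langle M_i\psi_i,\psi_i\rangle_i\le c_i^2m_i(V)+\lambda\Vert\psi_i\Vert_i^2\le\lambda\Vert\phi\Vert_i^2+c_i^2m_i(V).$$
The last step uses $\Vert\psi_i\Vert_i^2=\Vert\phi\Vert_i^2-c_i^2m_i(V)$ and $\lambda\ge0$. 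Summing over $i$ gives
$$\langle M\phi,\phi\rangle\le\lambda\Vert\phi\Vert^2+\sum_i c_i^2m_i(V).$$
The whole task is therefore to show that the error term is small:
$$\sum_i c_i^2 m_i(V)\le\frac{16\varepsilon^2}{(1-\varepsilon)^4}\Vert\phi\Vert^2.$$

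This estimate is the main step, and it uses almost regularity together with the orthogonality $\phi\perp\mathbbm{1}$. I read the hypothesis as saying there is $d_i$ with $m_i(v)=d_i+e_i(v)$ and $|e_i(v)|\le\varepsilon d_i$. Set $D=\sum_i d_i$ and $e=\sum_ie_i$, so that $m(v)=D+e(v)$ with $|e(v)|\le\varepsilon D$.

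First, from $0=\sum_v m(v)\phi(v)=D\sum_v\phi(v)+\sum_v e(v)\phi(v)$ we get $\left|\sum_v\phi(v)\right|\le\varepsilon\sum_v|\phi(v)|$. Next,
$$\left|\sum_v m_i(v)\phi(v)\right|\le d_i\left|\sum_v\phi(v)\right|+\varepsilon d_i\sum_v|\phi(v)|\le2\varepsilon d_i\sum_v|\phi(v)|.$$
By Cauchy--Schwarz and $m(v)\ge(1-\varepsilon)D$,
$$\sum_v|\phi(v)|\le\sqrt{|V|}\left(\sum_v\phi(v)^2\right)^{1/2}\le\sqrt{|V|}\,\frac{\Vert\phi\Vert}{\sqrt{(1-\varepsilon)D}}.$$
Combining these with $m_i(V)\ge(1-\varepsilon)d_i|V|$ yields
$$c_i^2m_i(V)=\frac{\left(\sum_v m_i(v)\phi(v)\right)^2}{m_i(V)}\le\frac{4\varepsilon^2d_i\Vert\phi\Vert^2}{(1-\varepsilon)^2D}.$$
Summing over $i$ and using $\sum_i d_i=D$ gives $\sum_ic_i^2m_i(V)\le\frac{4\varepsilon^2}{(1-\varepsilon)^2}\Vert\phi\Vert^2$. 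This is at most $\frac{16\varepsilon^2}{(1-\varepsilon)^4}\Vert\phi\Vert^2$, since $(1-\varepsilon)^2\le 1\le 4$.

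Taking the supremum over $\phi\perp\mathbbm{1}$ then gives the claimed bound on $\lambda((V,E))$. The only delicate point I anticipate is the reading of the almost-regularity hypothesis: it must be understood with a per-subgraph reference degree $d_i$, i.e.\ $|m_i(v)-d_i|\le\varepsilon d_i$. With that reading, the aggregate degree $m(v)$ is automatically $\varepsilon$-close to $D$, which is what links the global orthogonality condition to each local mean $c_i$. If instead one only had closeness of $m$ itself to some $D$ with a looser constant, the same argument goes through and the weaker constant $16/(1-\varepsilon)^4$ absorbs the losses.
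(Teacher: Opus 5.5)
Your proof is correct, and it shares the paper's skeleton. You split $\langle M\phi,\phi\rangle$ with Proposition~\ref{prop: M phi decomp}, project $\phi$ onto the constants in each $\ell^2(V_i)$, use $\lambda\ge 0$ on the orthogonal part, and show the constant parts are small because $\phi\perp\mathbbm{1}$ in $\ell^2(V)$.

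You differ from the paper only in that last estimate. The paper works subgraph by subgraph: it subtracts $\sum_v\frac{m(v)}{m(V)}\phi(v)=0$ from $\sum_v\frac{m_i(v)}{m_i(V)}\phi(v)$ and bounds the pointwise ratio $\bigl|1-\frac{m(v)m_i(V)}{m_i(v)m(V)}\bigr|\le\frac{4\varepsilon}{(1-\varepsilon)^2}$. It then applies Jensen with weights $m_i(v)/m_i(V)$, getting $\Vert\phi_i^0\Vert^2_{\ell^2(V_i)}\le\frac{16\varepsilon^2}{(1-\varepsilon)^4}\Vert\phi\Vert^2_{\ell^2(V_i)}$. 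You instead route both weighted sums through the unweighted sum $\sum_v\phi(v)$, losing only a factor $2$ from the triangle inequality. You then close with an unweighted Cauchy--Schwarz against the global norm, weighted by $d_i/D$. Because you never form the compounded ratio, you get the sharper constant $\frac{4\varepsilon^2}{(1-\varepsilon)^2}$.

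The paper's route has its own advantage. Its key step only needs $m_i(v)/m(v)$ to be nearly constant in $v$ (relative regularity), not absolute near-regularity. That is why the same computation is reused in Proposition~\ref{prop: two subgraphs}, where neither $m$ nor $m_1$ is assumed almost regular. Your unweighted reference sum, as written, genuinely uses absolute near-regularity of $m$ and of each $m_i$.

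Your reading of the hypothesis, with a reference degree $d_i$ for each $(V_i,E_i)$, is exactly what the paper's proof uses. Your closing hedge is wrong, though: per-subgraph near-regularity cannot be replaced by near-regularity of $m$ alone. Split $V=A\sqcup B$ with $|A|=|B|$. Let $E_1$ be a Ramanujan $D$-regular graph on $A$ plus a perfect matching between $A$ and $B$, and let $E_2$ be the same with $A$ and $B$ swapped. Then $m\equiv D+2$ and each $\lambda((V_i,E_i))=O(D^{-1/2})$. Yet $\mathbbm{1}_A-\mathbbm{1}_B\perp\mathbbm{1}$ has Rayleigh quotient $\frac{D-2}{D+2}$.
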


\begin{proof}
For every $1 \leq i \leq k$,  we denote $\ell_2 (V_i)$ to be the space of functions $\phi : V \rightarrow \mathbb{R}$ with the inner product 
$$\langle \phi,  \psi \rangle_{\ell_2 (V_i)} = \sum_{v \in V} m_i (v) \phi (v) \psi (v).$$
We also denote $M_i$ to be the random walk on $(V_i, E_i)$.  

We note that $\ell_2 (V_i)$ and $\ell^2 (V)$ are spaces of functions with the same domain and range and the difference between them is just the inner product.
 
We need to show that for every $\phi \in \ell^2 (V)$ such that $\langle \phi,  \mathbbm{1} \rangle_{\ell^2 (V)} = 0$,  it holds that 
$$\langle M \phi,  \phi \rangle \leq (\lambda + \frac{16 \varepsilon^2}{(1-\varepsilon)^4} ) \Vert \phi \Vert^2_{\ell^2 (V)}.$$

Fix $\phi \in \ell^2 (V)$ such that $\langle \phi,  \mathbbm{1} \rangle_{\ell^2 (V)} = 0$.  

We claim that in order to prove the stated result,  it is enough to show that for every $1 \leq i \leq k$,  it holds that 
\begin{equation}
\label{decomp-ineq}
\langle M_i \phi,  \phi \rangle_{\ell^2 (V_i)} \leq (\lambda + \frac{16 \varepsilon^2}{(1-\varepsilon)^4} ) \Vert \phi \Vert^2_{\ell^2 (V_i)}.
\end{equation}

Indeed,  assume we know that \eqref{decomp-ineq} holds,  then 
\begin{align*}
& \langle M \phi,  \phi \rangle  =^{\Cref{prop: M phi decomp}}  \sum_{i=1}^k \langle M_i \phi,  \phi \rangle_{\ell^2 (V_i)} \leq^{\eqref{decomp-ineq}} 
 \sum_{i=1}^k  (\lambda + \frac{16 \varepsilon^2}{(1-\varepsilon)^4} )  \Vert \phi \Vert^2_{\ell^2 (V_i)} = \\
& (\lambda + \frac{16 \varepsilon^2}{(1-\varepsilon)^4} )  \sum_{i=1}^k \sum_{v \in V} m_i (v) (\phi (v))^2 =  
 (\lambda + \frac{16 \varepsilon^2}{(1-\varepsilon)^4} )   \sum_{v \in V} \sum_{i=1}^k m_i (v) (\phi (v))^2 =  \\
&(\lambda + \frac{16 \varepsilon^2}{(1-\varepsilon)^4} )   \sum_{v \in V}  m (v) (\phi (v))^2 =  
 (\lambda + \frac{16 \varepsilon^2}{(1-\varepsilon)^4} ) \Vert \phi \Vert^2_{\ell^2 (V)}
\end{align*}
as needed.  Thus we are left to prove \eqref{decomp-ineq}.  

We note that from the fact that every $(V_i, E_i)$ is $\varepsilon$-almost regular,  it follows that $(V,E)$ is also $\varepsilon$-almost regular.  Indeed,  let $d_i$ be such that for every $1 \leq i \leq k$,  $\max_{v \in V} \vert m_i (v) - d_i \vert \leq \varepsilon d_i$.  Denote $d = d_1 +...+d_k$.  Then for every $v \in V$,
\begin{align*}
\vert m(v) - d \vert \leq \sum_{i=1}^k \vert m_i (v) - d_i \vert \leq \sum_{i=1}^k \varepsilon d_i = \varepsilon d.
\end{align*}
We use this fact to bound $\vert 1-\frac{m (v) m_i (V)}{m_i (v) m (V)} \vert$ for every $v \in V$ and for every $1 \leq i \leq k$.  Fix $v \in V$ and $1 \leq i \leq k$,  then 
\begin{align*}
\frac{m (v) m_i (V)}{m_i (v) m (V)} \leq 
\frac{(1+ \varepsilon)d  (1+ \varepsilon) d_i \vert V \vert}{(1- \varepsilon)d_i  (1- \varepsilon) d \vert V \vert} = 
\frac{(1+\varepsilon)^2}{(1-\varepsilon)^2}.
\end{align*}
Similarly,
\begin{align*}
\frac{m (v) m_i (V)}{m_i (v) m (V)} \geq \frac{(1- \varepsilon)^2}{(1+\varepsilon)^2}.
\end{align*}
Thus 
\begin{equation}
\label{decomp-ineq2}
\vert 1-\frac{m (v) m_i (V)}{m_i (v) m (V)} \vert \leq \frac{4 \varepsilon}{(1-\varepsilon)^2}.
\end{equation}

Fix $1 \leq i \leq k$ and decompose $\phi = \phi_i^0 + \phi_i^1$ by taking
$$\phi_i^0 = \frac{\langle \phi,  \mathbbm{1}_V \rangle_{\ell^2 (V_i)}}{\langle \mathbbm{1}_V ,  \mathbbm{1}_V \rangle_{\ell^2 (V_i)}} \mathbbm{1}_V$$
and $\phi_i^1 = \phi - \phi_i^0$.  In other words,  $\phi_i^0$ is the orthogonal projection of $\phi$ on the space of constant functions with respect to the inner product of $\ell^2 (V_i)$ and $\phi_i^1$ is the component of $\phi$ orthogonal to the constant functions.

We will show that almost-regularity implies that $\phi_i^0$ is small.  

\begin{align*}
& \left(\langle \phi,  \mathbbm{1}_V \rangle_{\ell^2 (V_i)} \right)^2 = 
\left( \sum_{v \in V} m_i (v) \phi (v) \right)^2 = 
(m_i (V))^2 \left( \sum_{v \in V} \frac{m_i (v)}{m_i (V)} \phi (v) \right)^2 =^{\langle \phi,  \mathbbm{1} \rangle_{\ell^2 (V)} = 0} \\
& (m_i (V))^2 \left( \sum_{v \in V} \frac{m_i (v)}{m_i (V)} \phi (v) - \frac{m(v)}{m(V)} \phi (v) \right)^2 = \\
& (m_i (V))^2 \left( \sum_{v \in V} \frac{m_i (v)}{m_i (V)}  \left(1 - \frac{m (v) m_i (V)}{m_i (v) m (V)} \right) \phi (v) \right)^2 \leq \\
& (m_i (V))^2 \sum_{v \in V}  \frac{m_i (v)}{m_i (V)}  \left(1 - \frac{m (v) m_i (V)}{m_i (v) m (V)} \right)^2 (\phi (v))^2  \leq^{\eqref{decomp-ineq2}} \\
&  \frac{16 \varepsilon^2}{(1-\varepsilon)^4} (m_i (V)) \sum_{v \in V} m_i (v)  (\phi (v))^2 = \frac{16 \varepsilon^2}{(1-\varepsilon)^4} (m_i (V)) \Vert \phi \Vert^2_{\ell^2 (V_i)}.
\end{align*}

Thus
\begin{align*}
\Vert \phi_i^0 \Vert_{\ell^2 (V_i)}^2 = 
\frac{\left(\langle \phi,  \mathbbm{1}_V \rangle_{\ell^2 (V_i)} \right)^2}{\langle \mathbbm{1}_V ,  \mathbbm{1}_V \rangle_{\ell^2 (V_i)}} \leq \frac{ \frac{16 \varepsilon^2}{(1-\varepsilon)^4} (m_i (V)) \Vert \phi \Vert^2_{\ell^2 (V_i)}}{m_i (V)} =  \frac{16 \varepsilon^2}{(1-\varepsilon)^4}  \Vert \phi \Vert^2_{\ell^2 (V_i)} 
\end{align*}

It follows that 
\begin{align*}
\langle M_i \phi,  \phi \rangle_{\ell^2 (V_i)} = \langle M_i (\phi_i^0 + \phi_i^1),  \phi_i^0 + \phi_i^1 \rangle_{\ell^2 (V_i)} = \\
\langle \phi_i^0 + M_i  \phi_i^1,  \phi_i^0 + \phi_i^1 \rangle_{\ell^2 (V_i)} = \Vert \phi_i^0 \Vert^2_{\ell^2 (V_i)} + \langle M_i  \phi_i^1,   \phi_i^1 \rangle_{\ell^2 (V_i)}  \leq \\
\Vert \phi_i^0 \Vert^2_{\ell^2 (V_i)} + \lambda \Vert \phi_i^1 \Vert^2_{\ell^2 (V_i)} \leq 
\frac{16 \varepsilon^2}{(1-\varepsilon)^4}  \Vert \phi \Vert^2_{\ell^2 (V_i)} + \lambda \Vert \phi \Vert^2_{\ell^2 (V_i)} 
\end{align*}
and \eqref{decomp-ineq} is proven.
\end{proof}

\begin{proposition}
\label{prop: two subgraphs}
Let $(V,E)$ be a connect multigraph without loops and $(V_1,E_1),  (V_2, E_2)$ a disjoint partition of $(V,E)$.  Assume that there are $1 > \varepsilon >0$ and  $\lambda \geq 0 $ such that $(V_1, E_1)$ is connected with $\lambda ((V_1,E_1)) \leq \lambda$ and that for every $v \in V$,  $m_2 (v) \leq \varepsilon m_1 (v)$.  Then 
$$\lambda ((V,E)) \leq \lambda  + \frac{\varepsilon}{1 + \varepsilon} + \frac{4 \varepsilon^2}{(1-\varepsilon)^2} .$$
\end{proposition}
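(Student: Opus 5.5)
The plan is to follow the strategy of the proof of \Cref{prop: decomp prop}, with the almost-regularity replaced by the domination $m_2(v)\le\varepsilon m_1(v)$. Fix $\phi\in\ell^2(V)$ with $\langle\phi,\mathbbm{1}\rangle_{\ell^2(V)}=0$. The goal is
$$\langle M\phi,\phi\rangle\le\Big(\lambda+\tfrac{\varepsilon}{1+\varepsilon}+\tfrac{4\varepsilon^2}{(1-\varepsilon)^2}\Big)\Vert\phi\Vert^2_{\ell^2(V)}.$$
As in \Cref{prop: M phi decomp}, I write
$$\langle M\phi,\phi\rangle=2\sum_{\{u,v\}\in E_1}m_1(\{u,v\})\phi(u)\phi(v)+2\sum_{\{u,v\}\in E_2}m_2(\{u,v\})\phi(u)\phi(v).$$
I use the edge-sum form directly, since $(V_2,E_2)$ may have isolated vertices; this is harmless. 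The first sum is $\langle M_1\phi,\phi\rangle_{\ell^2(V_1)}$.

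\emph{Second term.} Applying $2ab\le a^2+b^2$ edge by edge bounds the second sum by $\Vert\phi\Vert^2_{\ell^2(V_2)}=\sum_v m_2(v)\phi(v)^2$. Since $m=m_1+m_2$ and $m_2\le\varepsilon m_1$, we have $m_2(v)\le\frac{\varepsilon}{1+\varepsilon}m(v)$. Hence
$$\Vert\phi\Vert^2_{\ell^2(V_2)}\le\tfrac{\varepsilon}{1+\varepsilon}\Vert\phi\Vert^2_{\ell^2(V)}.$$

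\emph{First term.} I decompose $\phi=\phi^0+\phi^1$, where $\phi^0$ is the $\ell^2(V_1)$-orthogonal projection onto the constants. Then
$$\langle M_1\phi,\phi\rangle_{\ell^2(V_1)}=\Vert\phi^0\Vert^2_{\ell^2(V_1)}+\langle M_1\phi^1,\phi^1\rangle_{\ell^2(V_1)}\le\Vert\phi^0\Vert^2_{\ell^2(V_1)}+\lambda\Vert\phi\Vert^2_{\ell^2(V_1)},$$
using $\lambda\ge0$ and the connectedness of $(V_1,E_1)$. The key step is to show that $\phi^0$ is small. Because $\phi\perp\mathbbm{1}$ in $\ell^2(V)$,
$$\langle\phi,\mathbbm{1}\rangle_{\ell^2(V_1)}=-\sum_v m_2(v)\phi(v).$$
By Cauchy--Schwarz its square is at most $m_2(V)\Vert\phi\Vert^2_{\ell^2(V_2)}$. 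Dividing by $m_1(V)$ and using $m_2(V)\le\varepsilon m_1(V)$ gives
$$\Vert\phi^0\Vert^2_{\ell^2(V_1)}\le\varepsilon\Vert\phi\Vert^2_{\ell^2(V_2)}.$$

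\emph{Combining.} Using $\Vert\phi\Vert_{\ell^2(V_1)}\le\Vert\phi\Vert_{\ell^2(V)}$, the two estimates give
$$\langle M\phi,\phi\rangle\le\lambda\Vert\phi\Vert^2+(1+\varepsilon)\tfrac{\varepsilon}{1+\varepsilon}\Vert\phi\Vert^2=(\lambda+\varepsilon)\Vert\phi\Vert^2.$$
It remains to check that $\varepsilon\le\frac{\varepsilon}{1+\varepsilon}+\frac{4\varepsilon^2}{(1-\varepsilon)^2}$. This holds because $\varepsilon-\frac{\varepsilon}{1+\varepsilon}=\frac{\varepsilon^2}{1+\varepsilon}\le\frac{4\varepsilon^2}{(1-\varepsilon)^2}$ for $0<\varepsilon<1$.

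The only point needing care is the bound on the constant component $\phi^0$. That bound is exactly where the hypothesis $m_2\le\varepsilon m_1$ replaces the almost-regularity used in \Cref{prop: decomp prop}. The remaining steps are routine inequalities.
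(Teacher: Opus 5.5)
Your proof is correct, and its skeleton matches the paper's. You split $\langle M\phi,\phi\rangle$ into the $E_1$ and $E_2$ edge sums, bound the $E_2$ part by $\Vert\phi\Vert^2_{\ell^2(V_2)}\le\frac{\varepsilon}{1+\varepsilon}\Vert\phi\Vert^2$, and decompose $\phi$ in $\ell^2(V_1)$ into its constant component and the rest.

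The one real difference is how you control the constant component $\phi^0$. The paper reuses the template of \Cref{prop: decomp prop}. It shows that the pointwise ratio $\frac{m(v)m_1(V)}{m_1(v)m(V)}$ lies in $\bigl[\frac{1-\varepsilon}{1+\varepsilon},\frac{1+\varepsilon}{1-\varepsilon}\bigr]$ and applies convexity. This gives $\Vert\phi^0\Vert^2_{\ell^2(V_1)}\le\frac{4\varepsilon^2}{(1-\varepsilon)^2}\Vert\phi\Vert^2_{\ell^2(V_1)}$, which is where the third term of the stated bound comes from.

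You instead use the exact identity $m_1=m-m_2$ to write $\langle\phi,\mathbbm{1}\rangle_{\ell^2(V_1)}=-\sum_v m_2(v)\phi(v)$, then apply Cauchy--Schwarz with weights $m_2$. This gives $\Vert\phi^0\Vert^2_{\ell^2(V_1)}\le\varepsilon\Vert\phi\Vert^2_{\ell^2(V_2)}\le\frac{\varepsilon^2}{1+\varepsilon}\Vert\phi\Vert^2$. Your argument is shorter, avoids the $(1-\varepsilon)^{-2}$ blow-up, and yields the sharper bound $\lambda((V,E))\le\lambda+\varepsilon$. That implies the stated bound by your final elementary check.

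The paper's route buys only uniformity with \Cref{prop: decomp prop}. There the complementary pieces are not small relative to $m$, so a pointwise comparison of $m_i(v)/m(v)$ via almost-regularity is genuinely needed. Here the smallness of $m_2$ makes your direct identity available.
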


\begin{remark}
In the proposition above,  $(V_2, E_2)$ may have isolated vertices.  Therefore we cannot divide by $m_2 (v)$ and  $\Vert \phi \Vert_{\ell^2 (V_2)}^2 = \sum_{v \in V} m_2 (v) (\phi (v) )^2$ is actually not a norm, but a semi-norm.  However,  we keep the notation $\Vert \phi \Vert_{\ell^2 (V_2)}^2$ for the weighted sum (slightly abusing notation).
\end{remark}

\begin{proof}
Let $\phi \in \ell^2 (V)$ such that $\langle \phi,  \mathbbm{1}_V \rangle= 0$.  We will show that 
$$\langle M \phi,  \phi \rangle_{\ell^2 (V)} \leq (\lambda  + \frac{\varepsilon}{1 + \varepsilon} + \frac{4 \varepsilon^2}{(1-\varepsilon)^2} ) \Vert \phi \Vert^2_{\ell^2 (V)} .$$
We note that for every $v$,  
$$(1+\varepsilon)  m_2 (v) \leq  \varepsilon (m_1 (v) + m_2 (v)) = \varepsilon m (v)$$
and thus $m_2 (v) \leq \frac{\varepsilon}{1 + \varepsilon} m(v)$ and $m_1 (v) \geq \frac{1- \varepsilon}{1 + \varepsilon} m(v)$.  Therefore  
\begin{align*}
\Vert \phi \Vert^2_{\ell^2 (V_2)} =  \sum_{v \in V} m_2 (v) (\phi (v))^2 \leq 
\frac{\varepsilon}{1 + \varepsilon} \sum_{v \in V} m (v) (\phi (v))^2 = \frac{\varepsilon}{1 + \varepsilon} \Vert \phi \Vert^2_{\ell^2 (V)}. 
\end{align*}
Thus, 
\begin{align*}
& \langle M \phi, \phi \rangle_{\ell^2 (V)} = 2 \sum_{\lbrace v,u \rbrace \in E} m (\lbrace u,v \rbrace )  \phi (u) \phi (v) \leq \\
& 2 \sum_{\lbrace v,u \rbrace \in E_1} m_1 (\lbrace u,v \rbrace )  \phi (u) \phi (v)  + 2 \sum_{\lbrace v,u \rbrace \in E_2} m_2 (\lbrace u,v \rbrace ) \vert \phi (u) \phi (v)  \vert  \leq \\
& \langle M_1 \phi,  \phi \rangle_{\ell^2 (V_1)}  +  \sum_{\lbrace v,u \rbrace \in E_2} m_2 (\lbrace u,v \rbrace )  ( \vert \phi (u) \vert^2 +  \vert \phi (v)  \vert^2) = \\
&  \langle M_1 \phi,  \phi \rangle_{\ell^2 (V_1)} + \Vert \phi \Vert^2_{\ell^2 (V_2)}  \leq \langle M_1 \phi,  \phi \rangle_{\ell^2 (V_1)} +  \frac{\varepsilon}{1 + \varepsilon} \Vert \phi \Vert^2_{\ell^2 (V)}.
\end{align*}
and we are left to show that 
$$ \langle M_1 \phi,  \phi \rangle_{\ell^2 (V_1)}  \leq \left( \frac{4 \varepsilon^2}{(1-\varepsilon)^2}  + \lambda \right) \Vert \phi \Vert^2_{\ell^2 (V)}.$$

As in the previous proof,  decompose $\phi = \phi_1^0 + \phi_1^1$ by taking
$$\phi_1^0 = \frac{\langle \phi,  \mathbbm{1}_V \rangle_{\ell^2 (V_1)}}{\langle \mathbbm{1}_V ,  \mathbbm{1}_V \rangle_{\ell^2 (V_1)}} \mathbbm{1}_V$$
and $\phi_1^1 = \phi - \phi_1^0$.  In other words,  $\phi_1^0$ is the orthogonal projection of $\phi$ on the space of constant functions with respect to the inner product of $\ell^2 (V_1)$ and $\phi_1^1$ is the component of $\phi$ orthogonal to the constant functions.
 We note that for every $v$,  
$$\frac{m (v) m_1 (V)}{m_1 (v) m (V)}  \geq \frac{m (v) m_1 (V)}{m (v) m (V)} \geq \frac{1- \varepsilon}{1+ \varepsilon},$$
and 
$$\frac{m (v) m_1 (V)}{m_1 (v) m (V)}  \leq \frac{m (v) m_1 (V)}{m_1 (v) m_1 (V)} \leq \frac{1+ \varepsilon}{1- \varepsilon}.$$
Thus 
$$\vert \frac{m (v) m_1 (V)}{m_1 (v) m (V)} - 1 \vert \leq \frac{2 \varepsilon}{1- \varepsilon}.$$
Therefore 
 \begin{align*}
& \left(\langle \phi,  \mathbbm{1}_V \rangle_{\ell^2 (V_1)} \right)^2 = 
\left( \sum_{v \in V} m_1 (v) \phi (v) \right)^2 = 
(m_1 (V))^2 \left( \sum_{v \in V} \frac{m_1 (v)}{m_1 (V)} \phi (v) \right)^2 =^{\langle \phi,  \mathbbm{1} \rangle_{\ell^2 (V)} = 0} \\
& (m_1 (V))^2 \left( \sum_{v \in V} \frac{m_1 (v)}{m_1 (V)} \phi (v) - \frac{m(v)}{m(V)} \phi (v) \right)^2 = \\
& (m_1 (V))^2 \left( \sum_{v \in V} \frac{m_1 (v)}{m_1 (V)}  \left(1 - \frac{m (v) m_1 (V)}{m_1 (v) m (V)} \right) \phi (v) \right)^2 \leq \\
& (m_1 (V))^2 \sum_{v \in V}  \frac{m_1 (v)}{m_1 (V)}  \left(1 - \frac{m (v) m_1 (V)}{m_1 (v) m (V)} \right)^2 (\phi (v))^2  \leq \\
&  \frac{4 \varepsilon^2}{(1-\varepsilon)^2} (m_1 (V)) \sum_{v \in V} m_1 (v)  (\phi (v))^2 = \frac{4 \varepsilon^2}{(1-\varepsilon)^2} (m_1 (V)) \Vert \phi \Vert^2_{\ell^2 (V_1)}.
\end{align*}
Thus 
$$\Vert \phi_1^0 \Vert_{\ell^2 (V_1)}^2  \leq  \frac{4 \varepsilon^2}{(1-\varepsilon)^2}  \Vert \phi \Vert^2_{\ell^2 (V_1)} \leq  \frac{4 \varepsilon^2}{(1-\varepsilon)^2} \Vert \phi \Vert^2_{\ell^2 (V)}  .$$

To conclude,  we have that
\begin{align*}
\langle M_1 \phi,  \phi \rangle_{\ell^2 (V_1)}  \leq 
\Vert \phi_1^0 \Vert^2_{\ell^2 (V_1)}  +  \langle M_1 \phi_1^1,  \phi_1^1 \rangle_{\ell^2 (V_1)}  \leq \\
\frac{4 \varepsilon^2}{(1-\varepsilon)^2} \Vert \phi \Vert^2_{\ell^2 (V)} + \lambda  \Vert \phi \Vert^2_{\ell^2 (V)} 
\end{align*}
as needed.
\end{proof}


\section{Property (T) criterion}
\label{section: Property (T) criterion}
\subsection{Generalities regarding isometric actions and $1$-cocycles}

Let $\mathcal{H}$ be a real Hilbert space.  For a discrete group $\Gamma$,  an affine isometric action is a homomorphism $\rho : \Gamma \rightarrow \Isom (\mathcal{H} )$ where $\Isom (\mathcal{H})$ is the group of affine isometries of $\mathcal{H}$.  The action of every such $\rho$ is defined by 
$$\rho (g) . \xi = \pi (g) \xi + c (g),  \forall g \in \Gamma,  \forall \xi \in \mathcal{H}$$
where $\pi$ is an orthogonal representation of $\Gamma$ on $\mathcal{H}$ and $c : \Gamma \rightarrow \mathcal{H}$ is a $1$-cocycle of $\pi$, i.e.,  for every $g,h \in \Gamma$,  $c (gh) = c (g) + \pi (g) c (h)$. 

Assume that $\Gamma$ is finitely generated and $S$ is a symmetric generating set $\Gamma$.  A cocycle $c$ is called \textit{harmonic with respect to $S$} (or \textit{harmonic} if $S$ is clear from the context) if $\sum_{s \in S} c (s) = 0$.  We recall that Shalom  \cite{Shalom} proved that a finitely generated group $\Gamma$ has property (T) if and only if for every orthogonal representation $\pi$ of $\Gamma$ and every $1$-cocycle $c$ of $\pi$,  if $c$ is harmonic, then $c \equiv 0$.  

Below,  we assume that $\Gamma$ is a group generated by  $S = \lbrace s_1^{\pm},...,s_n^{\pm} \rbrace$.  The set $S$ denotes the formal symmetric alphabet of \(2n\) letters, and that reduced words are counted as formal words, even when they represent the same element of \(\Gamma\).

\begin{proposition}
\label{prop: bound norm c (w)}
Let $\Gamma$ be a discrete group generated by $S = \lbrace s_1^{\pm},...,s_n^{\pm} \rbrace$ and let $c$ be a $1$-cocycle for an orthogonal representation $\pi$.  For a word $w$ in the alphabet $S$ of length $k$ it holds that 
$$\Vert c (w) \Vert \leq k \max_{s \in S} \Vert c (s) \Vert.$$
\end{proposition}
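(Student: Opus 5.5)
The plan is to prove the bound by induction on the length $k$ of the word $w = t_1 t_2 \cdots t_k$ with $t_j \in S$, using only the cocycle identity and the fact that $\pi$ is orthogonal. Here $c(w)$ means $c$ evaluated at the element of $\Gamma$ represented by $w$. For $k=0$ the word is empty and represents the identity $e$. The cocycle identity with $g=h=e$ gives $c(e) = c(e) + \pi(e)c(e) = 2c(e)$, so $c(e)=0$ and the bound $\Vert c(e)\Vert \leq 0$ holds. For $k=1$ the word is a single letter $s \in S$, and the bound is immediate.

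For the inductive step, write $w = t_1 w'$, where $w' = t_2\cdots t_k$ has length $k-1$. The cocycle identity gives
$$c(w) = c(t_1) + \pi(t_1) c(w').$$
Since $\pi(t_1)$ is an orthogonal operator, it preserves norms. The triangle inequality then yields
$$\Vert c(w)\Vert \leq \Vert c(t_1)\Vert + \Vert c(w')\Vert \leq \max_{s\in S}\Vert c(s)\Vert + (k-1)\max_{s\in S}\Vert c(s)\Vert,$$
where the second inequality uses the induction hypothesis for $w'$. This is exactly the claimed bound $k \max_{s\in S}\Vert c(s)\Vert$.

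Equivalently, one can unroll the induction to get the telescoping formula $c(t_1\cdots t_k) = \sum_{j=1}^k \pi(t_1\cdots t_{j-1})\, c(t_j)$ and bound each term by $\max_{s\in S}\Vert c(s)\Vert$. There is no real obstacle here. The only points to keep track of are that $c$ depends on the group element rather than on the formal word, which is harmless because the formula holds for any word representing that element, and that orthogonality of $\pi$ is what makes each term norm-preserving.
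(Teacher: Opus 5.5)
Your proposal is correct and matches the paper's proof: both argue by induction on $k$, peel off the first letter via $c(t_1 w') = c(t_1) + \pi(t_1)c(w')$, and combine orthogonality of $\pi(t_1)$ with the triangle inequality. Your extra $k=0$ case and the unrolled telescoping formula are harmless additions that the paper does not need.
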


\begin{proof}
The proof is by induction on $k$.  The case $k=1$ is obvious.  Assume that the inequality holds for words of length $k$ and let $w$ be a word of length $k+1$.  Then there is $s_0 \in S$ and $w'$ of length $k$ such that $w = s_0 w'$.  Thus
\begin{align*}
& \Vert c (w) \Vert = \Vert c (s_0) + \pi (s_0) c (w') \Vert \leq  \Vert c (s_0) \Vert + \Vert c (w') \Vert \leq \\
& \Vert c (s_0) \Vert + k  \max_{s \in S} \Vert c (s) \Vert \leq (k+1)  \max_{s \in S} \Vert c (s) \Vert.
\end{align*}
\end{proof}

\begin{proposition}
\label{prop:c g-inv}
Let $c$ be a $1$-cocycle with respect to $\pi$,  then for every $g$,  $\pi (g^{-1}) c (g) = - c ( g^{-1})$.
\end{proposition}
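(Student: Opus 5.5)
The plan is to apply the cocycle identity to the product $g^{-1} g = e$ and then solve for $c(g^{-1})$.

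First, $c(e)=0$. Taking $g=h=e$ in $c(gh)=c(g)+\pi(g)c(h)$ gives $c(e) = c(e) + \pi(e) c(e) = 2c(e)$, because $\pi(e)=I$; hence $c(e)=0$.

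Next, take the cocycle identity with first argument $g^{-1}$ and second argument $g$:
$$0 = c(e) = c(g^{-1} g) = c(g^{-1}) + \pi(g^{-1}) c(g).$$
Rearranging gives $\pi(g^{-1}) c(g) = -c(g^{-1})$, which is the claim.

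The argument uses no orthogonality of $\pi$ beyond $\pi$ being a homomorphism, so that $\pi(e)=I$. The only point needing care is to apply the identity in the order $g^{-1}\cdot g$ rather than $g\cdot g^{-1}$. The latter order gives the equivalent form $c(g) = -\pi(g) c(g^{-1})$, and applying $\pi(g^{-1})$ to it recovers the same statement.
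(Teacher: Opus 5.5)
Your proof is correct and matches the paper's argument: evaluate the cocycle identity on $g^{-1}g=e$ and rearrange. The only addition is your explicit derivation of $c(e)=0$ from $c(e)=c(e)+\pi(e)c(e)$, which the paper uses without comment.
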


\begin{proof}
Let $g \in \Gamma$,  then
\begin{align*}
\underline{0} = c (e) = c (g^{-1} g) =c (g^{-1})+  \pi (g^{-1}) c (g)
\end{align*}
and the needed equality follows.
\end{proof}

\begin{proposition}
\label{prop: sum c (w)}
Let $\Gamma$ be a discrete group generated by $S = \lbrace s_1^{\pm},...,s_n^{\pm} \rbrace$.  Fix $k \in \mathbb{N}$ and $s \in S$ and denote $W_{s,k}$ to be all the reduced words of length $k$ in the alphabet $S$ that begin with $s$.  Then for every orthogonal representation $\pi$ of $\Gamma$ and every harmonic $1$-cocycle $c$ of $\pi$ it holds that 
$$\sum_{w \in W_{s,k}} c (w) =(1 +(2n-1) + ... + (2n-1)^{k-1}) c(s)   = \frac{(2n-1)^{k}-1}{2n-2} c (s) $$
\end{proposition}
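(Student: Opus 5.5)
We argue by induction on $k$, proving the recursion
$$\sum_{w \in W_{s,k+1}} c(w) = c(s) + (2n-1)\sum_{w \in W_{s,k}} c(w)$$
for every $k \geq 1$, which unrolls to $(1 + (2n-1) + \dots + (2n-1)^{k-1})c(s)$; the closed form is then the geometric sum. The base case $k=1$ is trivial, since $W_{s,1} = \lbrace s \rbrace$.

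For the inductive step we peel off the last letter rather than the first. Every reduced word of length $k+1$ beginning with $s$ can be written uniquely as $w t$, where $w \in W_{s,k}$ and $t \in S \setminus \lbrace \mathrm{last}(w)^{-1} \rbrace$. Conversely, every such concatenation is reduced and starts with $s$; for $k=1$ this is just the condition $t \neq s^{-1}$. The cocycle identity gives $c(wt) = c(w) + \pi(w) c(t)$. Summing over the $2n-1$ allowed letters $t$ and using harmonicity $\sum_{t \in S} c(t) = 0$, we get
$$\sum_{t \neq x^{-1}} c(wt) = (2n-1)c(w) + \pi(w)\Big(\sum_{t\in S} c(t) - c(x^{-1})\Big) = (2n-1)c(w) - \pi(w)c(x^{-1}),$$
where $x$ is the last letter of $w$.

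The remaining step is to evaluate $\pi(w)c(x^{-1})$. Write $w = w' x$, with $w'$ of length $k-1$ (possibly empty). Then $c(w) = c(w') + \pi(w')c(x)$. By \Cref{prop:c g-inv},
$$\pi(w)c(x^{-1}) = \pi(w')\pi(x)c(x^{-1}) = -\pi(w')c(x),$$
and $\pi(w')c(x) = c(w) - c(w')$. Hence $\sum_t c(wt) = (2n-1)c(w) + c(w) - c(w') = 2n\, c(w) - c(w')$.

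Summing this over $w \in W_{s,k}$ gives
$$\sum_{W_{s,k+1}} c = 2n \sum_{W_{s,k}} c - \sum_{w\in W_{s,k}} c(w'),$$
where $w'$ is $w$ with its last letter deleted. The map $w \mapsto w'$ from $W_{s,k}$ onto $W_{s,k-1}$ is $(2n-1)$-to-one when $k \geq 2$. When $k=1$, $w' = e$ and $c(e)=0$.

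This yields the second-order recursion $A_{k+1} = 2n A_k - (2n-1)A_{k-1}$, where $A_k = \sum_{W_{s,k}} c$, with initial values $A_0 = 0$ and $A_1 = c(s)$. Rewriting it as $A_{k+1} - A_k = (2n-1)(A_k - A_{k-1})$ gives $A_{k+1} - A_k = (2n-1)^k c(s)$, and summing these differences gives the claim.

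The main obstacle, and the only subtle point, is the bookkeeping: after the harmonicity cancellation, the leftover term $\pi(w)c(x^{-1})$ must be converted via \Cref{prop:c g-inv} and the cocycle identity into $c(w) - c(w')$. That conversion is what makes the sum telescope into the clean recursion.
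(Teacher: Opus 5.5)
Your proof is correct, but it takes a different route from the paper's.

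\textbf{The paper's route.} The paper peels off the \emph{first} letter. It writes each word of $W_{s,k+1}$ as $sw'$ with $w'\in W_{t,k}$, $t\neq s^{-1}$. The cocycle identity $c(sw')=c(s)+\pi(s)c(w')$ then pulls the \emph{same} operator $\pi(s)$ out of every term. Applying the induction hypothesis simultaneously to all first letters $t$ turns the sum into $(2n-1)^k c(s) + (1+\dots+(2n-1)^{k-1})\,\pi(s)\sum_{t\neq s^{-1}}c(t)$. Harmonicity is used only once, at the root, together with $-\pi(s)c(s^{-1})=c(s)$.

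\textbf{Your route.} You peel off the \emph{last} letter, so the operator $\pi(w)$ varies with $w$ and cannot be factored out. Instead you use harmonicity at every word. Your computation is really the mean-value identity $\sum_{t\in S}c(wt)=2n\,c(w)$: the map $g\mapsto c(g)$ is a harmonic function on the Cayley tree of the free group. Sphere sums of such a function inside one branch obey $A_{k+1}=2nA_k-(2n-1)A_{k-1}$.

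\textbf{Comparison.} Your version stays within the single branch $W_{s,\cdot}$ and exposes this tree-harmonic structure. The paper's version is shorter and closes the induction without a second-order recurrence.

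\textbf{Two small remarks.} First, the step you call the main obstacle is a one-liner. The cocycle identity gives $\pi(w)c(x^{-1})=c(wx^{-1})-c(w)=c(w')-c(w)$ directly, with no need for the inverse formula. Second, the first-order recursion you announce at the start is not what you actually derive. It does follow, though: your recurrence shows $A_{k+1}-(2n-1)A_k$ is independent of $k$, hence equal to $A_1-(2n-1)A_0=c(s)$.
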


\begin{proof}
We will prove the equality by induction on $k$.  For $k=1$,  it holds for every $s \in S$ that $W_{s,1} = \lbrace s \rbrace$ and thus $\sum_{w \in W_{s,1}} c (w) = c(s)$ as needed.  Assume that the equality holds for $k$.  Every $w \in W_{s,k+1}$ is of the form $w = s w'$ where $w ' \in \bigcup_{t \in S,  t \neq s^{-1}} W_{t,k}$.  Thus
\begin{align*}
& \sum_{w \in W_{s,k+1}} c (w) = 
 \sum_{t \in S,  t \neq s^{-1}} \sum_{w' \in W_{t,k}} c (s w') = \\
& \sum_{t \in S,  t \neq s^{-1}} \sum_{w' \in W_{t,k}} (c(s) + \pi (s) c (w')) = \\
& (2n-1)^k c (s) + \pi (s)  \sum_{t \in S,  t \neq s^{-1}} \sum_{w' \in W_{t,k}} c (w') =^{\text{induction assumption}} \\
& (2n-1)^k c (s) + \pi (s)  \sum_{t \in S,  t \neq s^{-1}} (1 +(2n-1) + ...  (2n-1)^{k-1}) c (t) =\\
& (2n-1)^k c (s) +(1 +(2n-1) + ...  +(2n-1)^{k-1}) \pi (s)  \left(  \sum_{t \in S,  t \neq s^{-1}}  c (t) \right) =^{c \text{ is harmonic}} \\
& (2n-1)^k c (s) +(1 +(2n-1) + ...  +(2n-1)^{k-1}) ( - \pi (s) c (s^{-1})) =^{\Cref{prop:c g-inv}} \\
& (2n-1)^k c (s) +(1 +(2n-1) + ... + (2n-1)^{k-1}) c (s) = \\
& (1 +(2n-1) + ...  (2n-1)^{k}) c (s)
\end{align*}
\end{proof}

\begin{proposition}
\label{prop: relation pos}
Let $\Gamma$ be a finitely generated group with a generating set $S$ and let $w$ be a relation of $\Gamma$.  Write $w = w_1 w_2 w_3 w_4$ where $w_1,...,w_4$ are words in the alphabet $S$.  Then for every orthogonal representation $\pi$ of $\Gamma$ and every $1$-cocycle $c$ of $\pi$ it holds that
$$\sum_{i=0}^3  \langle  c (w_{i}^{-1}), c (w_{i+1}) \rangle \geq 0,$$
where indices are taken modulo $4$.
\end{proposition}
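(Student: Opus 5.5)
The plan is to translate the statement into a fact about a closed quadrilateral in $\mathcal{H}$, using the orbit of $0$ under the affine action. Since $w$ is a relation of $\Gamma$, we have $c(w) = c(e) = 0$. I will introduce the prefixes $g_0 = e$, $g_1 = w_1$, $g_2 = w_1 w_2$, $g_3 = w_1w_2w_3$, and $g_4 = w_1w_2w_3w_4$. The last one equals $e$ in $\Gamma$, so $g_0 = g_4$, which is what makes the cyclic indexing modulo $4$ consistent.

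The cocycle identity gives $c(g_i) = c(g_{i-1}) + \pi(g_{i-1}) c(w_i)$. So I will set
$$v_i = \pi(g_{i-1}) c(w_i), \quad i = 1,\dots,4,$$
which are the edge vectors of the polygon with vertices $c(g_0), c(g_1), c(g_2), c(g_3)$. Summing the identity over $i$ telescopes to $v_1 + v_2 + v_3 + v_4 = c(g_4) - c(g_0) = 0$.

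Next I rewrite each summand in terms of the $v_i$. By \Cref{prop:c g-inv}, $c(w_i^{-1}) = -\pi(w_i^{-1}) c(w_i)$. Since $\pi$ is orthogonal,
$$\langle c(w_i^{-1}), c(w_{i+1}) \rangle = -\langle c(w_i), \pi(w_i) c(w_{i+1}) \rangle .$$
Applying the orthogonal operator $\pi(g_{i-1})$ to both entries, and using $g_{i-1} w_i = g_i$, this becomes $-\langle v_i, v_{i+1} \rangle$. The wrap-around term ($i=0$, i.e. the pair $w_4, w_1$) needs care: it becomes $-\langle v_4, v_1\rangle$ precisely because $g_4 = e = g_0$ in $\Gamma$. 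This index bookkeeping is the only step where I expect to have to be careful.

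Finally, the cyclic sum of inner products factors:
$$\langle v_1,v_2\rangle + \langle v_2,v_3\rangle + \langle v_3,v_4\rangle + \langle v_4,v_1\rangle = \langle v_1+v_3,\ v_2+v_4\rangle .$$
Since $v_2 + v_4 = -(v_1+v_3)$, this equals $-\Vert v_1+v_3\Vert^2$. Hence
$$\sum_{i=0}^3 \langle c(w_i^{-1}), c(w_{i+1}) \rangle = \Vert v_1 + v_3 \Vert^2 \geq 0,$$
which is the claim, with an explicit formula for the quantity.
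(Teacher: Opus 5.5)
Your proof is correct and is essentially the paper's argument. The paper uses the same closed quadrilateral of orbit points $\xi_i=\rho(w_1\cdots w_i).\underline{0}=c(g_i)$ and identifies each cyclic term with $-\langle \xi_i-\xi_{i+1},\xi_{i+1}-\xi_{i+2}\rangle$. It reaches the same explicit identity, namely that the sum equals $\Vert(\xi_0-\xi_1)+(\xi_2-\xi_3)\Vert^2=\Vert v_1+v_3\Vert^2$, but gets there by expanding $0=\Vert\sum_i(\xi_i-\xi_{i+1})\Vert^2$; your factorization $\langle v_1+v_3,v_2+v_4\rangle$ is a slightly more direct route to the same conclusion.
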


\begin{proof}
Fix $\pi$ and $c$ and define $\rho$ to be the affine isometric action $\rho (g) .\xi = \pi (g) \xi + c (g)$.  Let $\xi_0 = \underline{0}$ and $\xi_i = \rho (w_1 ... w_i).\underline{0}$ for $i =1,...,3$.  By the fact that $w_1 ... w_4$ is a relation,  it also follows that $\rho (w_1 ... w_4). \underline{0} = \rho (e). \underline{0} = \underline{0}$. 

In the following computation,  the indices are taken modulo $4$:
\begin{align*}
& 0 = \Vert \sum_{i=0}^{3} (\xi_i - \xi_{i+1}) \Vert^2 = \\
& \sum_{i=0}^{3} \Vert \xi_i - \xi_{i+1} \Vert^2 + 2 \sum_{i=0}^3 \langle \xi_i - \xi_{i+1},  \xi_{i+1} - \xi_{i+2} \rangle + 2 \langle \xi_0 - \xi_1 ,  \xi_2 - \xi_3 \rangle + 2 \langle \xi_1 - \xi_2 ,  \xi_3 - \xi_0 \rangle  = \\
&  \Vert (\xi_0 - \xi_1) + (\xi_2 - \xi_3) \Vert^2 +   \Vert (\xi_1 - \xi_2) + (\xi_3 - \xi_0) \Vert^2 +  2 \sum_{i=0}^3 \langle \xi_i - \xi_{i+1},  \xi_{i+1} - \xi_{i+2} \rangle.
\end{align*}
Thus
$$- 2 \sum_{i=0}^3 \langle \xi_i - \xi_{i+1},  \xi_{i+1} - \xi_{i+2} \rangle =  2 \Vert (\xi_0 - \xi_1) + (\xi_2 - \xi_3) \Vert^2 \geq 0.$$
We note that for every two words $w, w'$ in the alphabet $S$,  it holds that
\begin{align*}
\rho (w).\underline{0} - \rho (w w').\underline{0} = \pi (w). (\underline{0} - \rho (w').\underline{0}) = -\pi (w) c (w').
\end{align*}
Thus for every $i=1,2,3$,  
$$\xi_i - \xi_{i+1} =\rho (w_1 ... w_i) .\underline{0} - \rho (w_1 ... w_{i+1}) .\underline{0} = - \pi (w_1 ... w_i) c (w_{i+1})$$
and it follows for $i=1,2$,
\begin{align*}
& \langle \xi_i - \xi_{i+1},  \xi_{i+1} - \xi_{i+2} \rangle = \langle - \pi (w_1 ... w_i) c (w_{i+1}),  - \pi (w_1 ... w_{i+1}) c (w_{i+2}) \rangle =^{\Cref{prop:c g-inv}} \\
& \langle  \pi (w_{i+1}^{-1}) c (w_{i+1}),   c (w_{i+2}) \rangle = - \langle  c (w_{i+1}^{-1}),   c (w_{i+2}) \rangle.
\end{align*}
A similar computation (using the fact that $\xi_0 - \xi_1 = -c (w_1)$) shows that also for $i=0,3$ (where indices are taken modulo 4) it holds that
$$ \langle \xi_i - \xi_{i+1},  \xi_{i+1} - \xi_{i+2} \rangle =- \langle  c (w_{i+1}^{-1}),   c (w_{i+2}) \rangle.$$
Combining this with the above shows that 
$$-2 \sum_{i=0}^3  - \langle  c (w_{i}^{-1}), c (w_{i+1}) \rangle \geq 0$$
or equivalently that
$$\sum_{i=0}^3  \langle  c (w_{i}^{-1}), c (w_{i+1}) \rangle \geq 0$$
as needed.
\end{proof}

\subsection{Criterion}
\label{subsection: Criterion}
Let $\Gamma = \langle S \vert R \rangle$ be a group such that $S = \lbrace s_1^{\pm},...,s_n^{\pm} \rbrace$ and all the relators of $R$ are cyclically reduced in the alphabet $S$.  We also assume that there is $\ell \in \mathbb{N}$ that is divisible by $4$ such that all the relations of $R$ are of length $\ell$ (with respect to the alphabet $S$).  

We will define a graph associated to this presentation to play a role analogous to that of the link in \.{Z}uk's criterion for property (T).  

We denote $W_{\frac{\ell}{4}}$ to be all the reduced words of length $\frac{\ell}{4}$ in the alphabet $S$.  For $w_1,  w_2 \in W_{\frac{\ell}{4}}$ and $r \in R$,  we will say that $w_1$ \textit{cyclically precedes} $w_2$ in $r$ if one of the following occurs: 
\begin{enumerate}[label=(\arabic*)]
\item There is $w$ of length $\frac{\ell}{2}$ such that $w_1 w_2 w  = r$.
\item There are $w,  w' $ of length $\frac{\ell}{4}$ such that $w w_1 w_2 w' =r$.
\item There  is $w$ of length $\frac{\ell}{2}$ such that $w w_1 w_2  = r$.
\item There  is $w$ of length $\frac{\ell}{2}$ such that $w_2 w w_1  =r$.
\end{enumerate}
Given $w_1,  w_2 \in W_{\frac{\ell}{4}}$ and $r \in R$,  we define $\cpnum (w_1, w_2 ; r)$ as the number of occurrences in which $w_1$ cyclically precedes $w_2$ in $r$,  e.g.,  if $w_1 =w_2 = w$ and $r = wwww$,  then  $\cpnum (w_1, w_2 ; r) = 4$.  We also define $\cpnum (w_1, w_2 ; R)$ as 
$$\cpnum (w_1, w_2 ; R) = \sum_{r \in R} \cpnum (w_1, w_2 ; r) .$$

We define the following bipartite multigraph $(V,E) = (V (R), E (R))$ on the vertices $V =V_0 \cup V_1$ defined as $V_i = \lbrace (i, w) : w \in W_{\frac{\ell}{4}} \rbrace$,   $i = 0,1$.  For $(0, w_0) \in V_0$ and $(1, w_1) \in V_1$,  the number of edges connecting $(0, w_0)$ and $(1, w_1)$ is defined to be $\cpnum (w_0^{-1}, w_1 ; R)$.  

\begin{observation}
If $w_0,  w_1 \in  W_{\frac{\ell}{4}}$ have the same first letter in $S$, then there is no edge connecting $(0, w_0)$ and $(1, w_1)$.  Indeed,  if $w_0 = s w_0', w_1 = s w_1'$,  then $w_0^{-1} = (w_0')^{-1} s^{-1}$ and since all relations in $R$ are cyclically reduced $w_0^{-1} $ cannot cyclically precede $w_1$.
\end{observation}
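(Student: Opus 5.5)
The plan is to unwind the definition of the edge multiplicity and show that $\cpnum (w_0^{-1}, w_1 ; r) = 0$ for every $r \in R$. Summing over $r$ then gives $\cpnum (w_0^{-1}, w_1 ; R) = 0$, which by the definition of $(V,E)$ means there is no edge between $(0,w_0)$ and $(1,w_1)$.

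First, fix the local picture. Write $w_0 = s w_0'$ and $w_1 = s w_1'$ with $s \in S$. Since $w_0$ is reduced, its formal inverse $w_0^{-1} = (w_0')^{-1} s^{-1}$ is again a reduced word of length $\frac{\ell}{4}$, so it lies in $W_{\frac{\ell}{4}}$. It ends with the letter $s^{-1}$, while $w_1$ begins with $s$. The key point is that in every way $w_0^{-1}$ can cyclically precede $w_1$ in $r$, the last letter of $w_0^{-1}$ is immediately followed, cyclically in $r$, by the first letter of $w_1$. Hence $r$, read cyclically, would contain the cancelling pair $s^{-1}s$.

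Second, go through the four clauses of the definition of ``cyclically precedes''.
\begin{enumerate}[label=(\arabic*)]
\item In clause (1), $r = w_0^{-1} w_1 w$, so $r$ contains $w_0^{-1} w_1$ as a consecutive subword.
\item In clause (2), $r = w\, w_0^{-1} w_1 w'$, and again $w_0^{-1} w_1$ is a consecutive subword.
\item In clause (3), $r = w\, w_0^{-1} w_1$, with the same conclusion.
\item In clause (4), $r = w_1 w\, w_0^{-1}$. Here the adjacency wraps around: the last letter of $r$ is $s^{-1}$ and the first letter of $r$ is $s$.
\end{enumerate}
In clauses (1)--(3) the subword $s^{-1}s$ contradicts the fact that $r$ is reduced. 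In clause (4) the pair formed by the last and first letters contradicts the fact that $r$ is cyclically reduced. Cyclically reduced words are in particular reduced, so both situations are excluded. Therefore no occurrence is counted, and $\cpnum (w_0^{-1}, w_1 ; r)=0$.

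There is no real obstacle here. The only point needing care is clause (4), which is the reason cyclic reducedness, and not just reducedness, of the relators is needed. One should also note that the counting in $\cpnum$ is over formal occurrences in the word $r$, so it suffices to rule out each occurrence individually, as above.
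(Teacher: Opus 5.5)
Your proof is correct and is the same argument as the paper's. In every clause of ``cyclically precedes'', the final letter $s^{-1}$ of $w_0^{-1}$ is cyclically adjacent to the initial letter $s$ of $w_1$ inside $r$, which is impossible for a cyclically reduced relator; your case-by-case check, with clause (4) identified as the wrap-around case that needs cyclic reducedness rather than plain reducedness, just spells out what the paper states in one line.
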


For $s \in S$,  we define $W_{s,\frac{\ell}{4}}$ to be the reduced words in $W_{\frac{\ell}{4}}$ whose first letter is $s$.  We further denote $V_i^s = \lbrace (i,w) : w \in W_{s, \frac{\ell}{4}} \rbrace$ for $i=0,1$.  We also define $V_{s,s'} = V_0^s \cup V_1^{s'}$ and $(V_{s,s'},  E_{s,s'})$ to be the sub-multigraph of $(V,E)$ spanned by vertex set $V_{s,s'}$.  We denote $m_{s,s'}$ to be the degree function of this graph.

\begin{theorem}
\label{thm: main criterion}
Let $\Gamma = \langle S \vert R \rangle$ as above and let $(V,E) = (V (R),  E (R))$.  Assume that there are $1> \varepsilon_1,  \varepsilon_2,  \varepsilon_3, \varepsilon_4 > 0$ such that the following conditions hold:
\begin{enumerate}[label=(\arabic*)]
\item For every $i =0,1$ every $s \in S$ and every $v \in V_i^s$ it holds that 
$$\left\vert 1 - \frac{ m (V_i^s)}{\vert V_i^s  \vert m(v)} \right\vert \leq \frac{\varepsilon_1}{\ell}.$$
\item For every $s,s' \in S,  s \neq s'$,  
$$\forall v \in V_{s,s'}, \left\vert \frac{m_{s,s'} (v)}{m (v)} - \frac{1}{2n-1} \right\vert \leq \frac{\ell^{-2} \varepsilon_2}{2n-1}  .$$
\item For every $i=0,1$ and $s \in S$, 
$$\frac{m (V_i^{s})}{m (V)} \geq \frac{1-\varepsilon_3}{4n}.$$
\item For every $s,s' \in S,  s \neq s'$,  the graph $(V_{s,s'},  E_{s,s'})$ is connected and moreover $\lambda ((V_{s,s'},  E_{s,s'})) \leq (\ell^{-2} \varepsilon_4)$.
\end{enumerate}
If all $\varepsilon_1, ... ,\varepsilon_4$ are sufficiently small with respect to $n$, then $\Gamma$ has property (T).  Explicitly,  if  
\begin{align*}
& \frac{1}{2n -1} > \\
 & \frac{\varepsilon_1+ 4 (\ell^{-2} \varepsilon_2)}{\sqrt{1-\varepsilon_3}} \sqrt{n} +   \frac{\varepsilon_1^2}{1-\varepsilon_3} \frac{n}{4}   + \\
&  \frac{\sqrt{n}}{ \ell}  \sqrt{\varepsilon_4^2 + \frac{ \varepsilon_2^2}{1- (\ell^{-2} \varepsilon_2)} }  \frac{1}{\sqrt{1-\varepsilon_3}}  (1 +\frac{\sqrt{n}}{2}  \frac{\varepsilon_1}{\sqrt{1-\varepsilon_3}}  )  + \\
&  \frac{n}{4} \sqrt{ \varepsilon_4^2 + \frac{ \varepsilon_2^2}{1- (\ell^{-2} \varepsilon_2)} }  \frac{1}{1-\varepsilon_3},
\end{align*} 
then $\Gamma$ has property (T).
\end{theorem}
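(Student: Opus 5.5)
By Shalom's theorem, it suffices to fix an orthogonal representation $\pi$ and a harmonic $1$-cocycle $c$, and to show that $c\equiv 0$.

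\textbf{The test function.} Define $\phi\in\ell^2(V;\mathcal H)$ by $\phi(0,w)=c(w^{-1})$ and $\phi(1,w)=c(w)$. Each relator $r\in R$ splits as $r=w_1w_2w_3w_4$ with $w_i\in W_{\ell/4}$. \Cref{prop: relation pos} gives $\sum_i\langle c(w_i^{-1}),c(w_{i+1})\rangle\ge 0$. Each of the four terms is exactly one of the cyclic-precedence occurrences of $w_i$ before $w_{i+1}$ in $r$, so it contributes one edge between $(0,w_i)$ and $(1,w_{i+1})$. Summing over $r$ and using the vector-valued formula for $\langle (M\otimes I)\phi,\phi\rangle$ gives
\[
\langle (M\otimes I_{\mathcal H})\phi,\phi\rangle\ \ge\ 0 .
\]
The plan is to show the opposite inequality: harmonicity forces this quantity to be at most $-\bigl(\tfrac1{2n-1}-\text{error}\bigr)\|\phi\|^2$. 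Then $\phi=0$, and in particular $c(s)=0$ for all $s$.

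\textbf{Decomposing $\phi$.} Split $\phi=\phi^{\mathrm{fl}}+\psi$, where $\phi^{\mathrm{fl}}$ is the first-letter projection. On $V_1^s$ it is the $m$-weighted average of $\phi$ over $V_1^s$; on $V_0^s$ it is the average of $c(w^{-1})$ over the words whose first letter is $s$. By \Cref{prop: sum c (w)}, together with condition (1) (almost constant degrees on each $V_i^s$), this average is $\approx \frac{(2n-1)^{k}-1}{(2n-2)|W_{s,k}|}c(s)\approx\frac{2n-1}{2n-2}c(s)$ up to a relative error $O(\varepsilon_1/\ell)$. For $V_0$ the relevant identity is the analogue for $\pi(w)^{-1}$-twisted words; I expect to handle it by writing $c(w^{-1})=-\pi(w^{-1})c(w)$ via \Cref{prop:c g-inv} and averaging the last letter. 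Hence $\phi^{\mathrm{fl}}$ is, up to small errors, the function $v\mapsto a_{s}$ on $V_0^s$ and $V_1^s$, where $\sum_s a_s=0$ by harmonicity.

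\textbf{The first-letter part.} Compute $\langle (M\otimes I)\phi^{\mathrm{fl}},\phi^{\mathrm{fl}}\rangle$ using condition (2): a vertex of $V_0^s$ sends a fraction $\approx\frac1{2n-1}$ of its edges to each $V_1^{s'}$ with $s'\neq s$, and none to $V_1^s$ by the Observation. The quadratic form is then essentially
\[
\frac{2}{2n-1}\sum_{s\neq s'}\mu\,\langle a_s,a_{s'}\rangle=-\frac{2\mu}{2n-1}\sum_s\|a_s\|^2 ,
\]
using $\sum_s a_s=0$ and condition (3) to keep the block weights $\mu=m(V_i^s)/m(V)$ comparable to $\frac{1}{4n}$. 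This gives $-\frac1{2n-1}\|\phi^{\mathrm{fl}}\|^2$ plus errors. These are the $\varepsilon_1\sqrt n$ and $\varepsilon_1^2 n/4$ terms, with $\ell^{-2}\varepsilon_2$ entering from the degree deviation.

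\textbf{The complement and cross terms.} Decompose $(V,E)$ into the pieces $E_{s,s'}$, as in \Cref{prop: M phi decomp}. The component $\psi$ is orthogonal to constants on each $V_i^s$ only with respect to $m$, not $m_{s,s'}$; condition (2) makes the discrepancy small, exactly as in the proof of \Cref{prop: decomp prop}. Because the graphs $(V_{s,s'},E_{s,s'})$ are connected and bipartite, with $\lambda\le\ell^{-2}\varepsilon_4$ by condition (4), the bipartite fact from \S2 (norm bounded by $\lambda$ off $\mathbbm 1_{V_0},\mathbbm 1_{V_1}$) gives
\[
\bigl\|(M\otimes I)\psi\bigr\|\lesssim \sqrt{\varepsilon_4^2\ell^{-4}+\ell^{-4}\varepsilon_2^2/(1-\ell^{-2}\varepsilon_2)}\;\|\psi\| .
\]
It remains to bound $\|\psi\|$ relative to $\|\phi^{\mathrm{fl}}\|$, since $\psi$ could a priori be large. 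Here I would use \Cref{prop: bound norm c (w)}: $\|\phi(v)\|\le \frac{\ell}{4}\max_s\|c(s)\|$, so $\|\psi\|\lesssim \ell\sqrt{n}\,\|\phi^{\mathrm{fl}}\|/\sqrt{1-\varepsilon_3}$. This factor of $\ell$ is exactly cancelled by the $\ell^{-2}$ in $\varepsilon_2$ and $\varepsilon_4$: the cross terms are of size $\ell^{-1}\sqrt n(\cdots)$ and the $\psi$–$\psi$ term is of size $\frac n4(\cdots)$, matching the stated inequality.

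Assembling the pieces yields
\[
0\le\langle (M\otimes I)\phi,\phi\rangle\le\Bigl(-\tfrac1{2n-1}+\text{RHS}\Bigr)\|\phi^{\mathrm{fl}}\|^2 .
\]
This forces every $a_s=0$, hence $c(s)=0$ for all $s$, so $c\equiv0$.

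The main obstacle I expect is the accurate computation of the first-letter averages on $V_0$, where the words are inverted and twisted by $\pi$. The careful bookkeeping of the normalization constants there must produce precisely the leading $-\frac1{2n-1}$.
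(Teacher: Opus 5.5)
Your overall architecture is the paper's: project onto $\mathcal W=\mathrm{span}\{\mathbbm 1_{V_i^s}\}$, get $-\frac{1}{2n-1}$ from harmonicity and condition (2), control the complement with condition (4) plus almost-orthogonality, and compare norms via \Cref{prop: bound norm c (w)} and condition (3). However, your test function is wrong, and the first step fails.

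\textbf{The positivity step.} By definition, the number of edges between $(0,w_0)$ and $(1,w_1)$ is $\cpnum(w_0^{-1},w_1;R)$. So an occurrence of $w_i$ cyclically preceding $w_{i+1}$ in $r$ produces an edge between $(0,w_i^{-1})$ and $(1,w_{i+1})$, not between $(0,w_i)$ and $(1,w_{i+1})$. With your choice $\phi(0,w)=c(w^{-1})$, the vertex $(0,w_i^{-1})$ carries $c(w_i)$. Your quadratic form is therefore $2\sum_{r}\sum_i\langle c(w_i),c(w_{i+1})\rangle$, and \Cref{prop: relation pos} says nothing about this sum. In general it even has the wrong sign. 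For the trivial representation we have $c(w^{-1})=-c(w)$, so your $\phi$ is the paper's $\phi_c$ with its sign flipped on $V_0$. On a bipartite graph this negates the form, so you get $\le 0$ rather than $\ge 0$. Concretely, with $x_i=c(w_i)$ and $\sum_i x_i=0$, each relator contributes $\langle x_1+x_3,x_2+x_4\rangle=-\|x_1+x_3\|^2$.

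\textbf{The obstacle on $V_0$.} The same misreading causes the difficulty you anticipated there. The average of $c(w^{-1})$ over $w\in W_{s,\ell/4}$ is an average of $c$ over words ending in $s^{-1}$, and harmonicity gives no identity for it. Already for length $2$ it equals $\frac{1}{2n-1}\bigl(-c(s)+\sum_{t\neq s}\pi(t)\,c(s^{-1})\bigr)$, which involves an uncontrolled operator applied to $c(s^{-1})$. Consequently the block value $b_s$ on $V_0^s$ is not close to the block value $a_s$ on $V_1^s$. The first-letter form, which is essentially proportional to $\sum_{s\neq s'}\langle b_s,a_{s'}\rangle=-\sum_s\langle b_s,a_s\rangle$, then has no sign, so no $-\frac{1}{2n-1}$ emerges.

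\textbf{The fix.} Take $\phi(0,w)=c(w)$ on $V_0$ as well, as the paper does; the inversion is already built into the vertex labels. Positivity then follows directly from \Cref{prop: relation pos}. On both $V_0^s$ and $V_1^s$ the uniform average is exactly $\frac{(2n-1)^{\ell/4}-1}{(2n-2)(2n-1)^{\ell/4-1}}\,c(s)$ by \Cref{prop: sum c (w)}. With this change your remaining steps go through as in the paper's proof.
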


The following definition will play an important role in the proof of the theorem:
\begin{definition}
\label{def: phi c}
Let $\pi$ be an orthogonal representation of $\Gamma$ and let $c : \Gamma \rightarrow \mathcal{H}$ be a $1$-cocycle of $\pi$.  Define $\phi_c \in \ell^2 (V ; \mathcal{H})$ as $\phi_c ((i,w)) = c (w)$ for $i =0,1, w \in W_{\frac{\ell}{4}}$. 
\end{definition}

\begin{proposition}
\label{prop: M phi c} 
For a $1$-cocycle $c$ and the function $\phi_c \in \ell^2 (V ; \mathcal{H})$ defined above, it holds that 
$$\langle (M \otimes I_{\mathcal{H}}) \phi_c,  \phi_c \rangle \geq 0.$$
\end{proposition}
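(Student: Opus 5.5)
We expand the quadratic form edge by edge, group the edges according to the relator and the cyclic position that produced them, and then apply \Cref{prop: relation pos} to each relator.

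By the second fact recorded for $\ell^2(V;\mathcal{H})$,
$$\langle (M\otimes I_{\mathcal{H}})\phi_c,\phi_c\rangle = 2\sum_{\{u,v\}\in E} m(\{u,v\})\langle \phi_c(u),\phi_c(v)\rangle .$$
Since the graph is bipartite, every edge joins some $(0,w_0)$ to some $(1,w_1)$. The multiplicity of such a pair is $\cpnum(w_0^{-1},w_1;R)=\sum_{r\in R}\cpnum(w_0^{-1},w_1;r)$. Hence the sum can be rewritten as
$$2\sum_{r\in R}\ \sum_{\text{occurrences}} \langle c(w_0), c(w_1)\rangle .$$
Here the inner sum runs over all occurrences, of one of the four types (1)--(4), in which a word $u=w_0^{-1}$ cyclically precedes $w_1$ in $r$. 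The contributing term is $\langle \phi_c(0,w_0),\phi_c(1,w_1)\rangle=\langle c(u^{-1}),c(w_1)\rangle$. It therefore suffices to show that, for each fixed $r\in R$, the contribution of that relator is nonnegative.

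Fix $r$ and split it as $r=a_1a_2a_3a_4$, each $a_j$ of length $\ell/4$. These subwords are reduced, since $r$ is reduced, so each $a_j$ lies in $W_{\frac{\ell}{4}}$. The four types (1)--(4) of cyclic precedence correspond exactly to the four consecutive pairs $(a_1,a_2)$, $(a_2,a_3)$, $(a_3,a_4)$, $(a_4,a_1)$.

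\emph{Main bookkeeping point.} We must check that each such pair gives exactly one edge occurrence in $r$, and that every occurrence counted in $\cpnum$ arises this way. Indeed, an occurrence of type (1) forces $w_1=a_1$ and $w_2=a_2$, because lengths determine the positions; types (2), (3), (4) similarly force the pairs $(a_2,a_3)$, $(a_3,a_4)$, $(a_4,a_1)$. Coincidences such as $r=wwww$ are handled correctly, since $\cpnum$ counts occurrences with multiplicity, exactly as in the example given in its definition.

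So the occurrence for the pair $(a_j,a_{j+1})$ corresponds to an edge from $(0,a_j^{-1})$ to $(1,a_{j+1})$, and the contribution of $r$ is
$$2\sum_{j=1}^{4}\langle c(a_j^{-1}),c(a_{j+1})\rangle,$$
with indices taken modulo $4$. Since $r$ is a relation of $\Gamma$, \Cref{prop: relation pos}, applied with $w_j=a_j$, gives that this sum is $\ge 0$ (the index shift there is immaterial modulo $4$). Summing over $r\in R$ gives $\langle (M\otimes I_{\mathcal{H}})\phi_c,\phi_c\rangle\ge 0$.

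The main obstacle is the bookkeeping step. One has to make sure that the unordered-pair sum over $E$, with the factor $2$ and the multiplicities, matches precisely the sum over the four cyclic positions of each relator. This means no double counting between $V_0$ and $V_1$: each edge is only counted in the direction $V_0\to V_1$, and the vertex $(0,w_0)$ carries the value $c(w_0)=c(u^{-1})$ for $u=w_0^{-1}$.
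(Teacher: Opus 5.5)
Your proposal is correct and follows essentially the same route as the paper: you expand $\langle (M\otimes I_{\mathcal{H}})\phi_c,\phi_c\rangle$ as $2\sum m(\{u,v\})\langle\phi_c(u),\phi_c(v)\rangle$ over the bipartite edges, regroup by relator $r=a_1a_2a_3a_4$ and its four cyclic positions, and apply \Cref{prop: relation pos} to each relator. Your explicit check is the bookkeeping the paper compresses into its chain of equalities: the four types of cyclic precedence correspond bijectively to the pairs $(a_j,a_{j+1})$, and the vertex $(0,a_j^{-1})$ carries $c(a_j^{-1})$.
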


\begin{proof}
As noted above,  for every $\phi \in \ell^2 (V ; \mathcal{H})$ it holds that 
$$\langle (M \otimes I_{\mathcal{H}}) \phi,   \phi \rangle = 2 \sum_{\lbrace u,v \rbrace \in E} m (\lbrace u,v \rbrace ) \langle \phi (u),  \phi (v) \rangle.$$
Thus for $\phi_c$ defined above
\begin{align*}
& \langle (M \otimes I_{\mathcal{H}}) \phi_c,   \phi_c \rangle = 2 \sum_{\lbrace u,v \rbrace \in E} m (\lbrace u,v \rbrace ) \langle \phi_c (u),  \phi_c (v) \rangle = \\
& 2 \sum_{(0, w) \in V_0,  (1,w') \in V_1} \cpnum (w^{-1} , w ' ; R) \langle c (w),  c (w ') \rangle = \\
& 2 \sum_{w,  w' \in W_{\frac{\ell}{4}}} \sum_{r \in R} \cpnum (w^{-1} , w ' ; r) \langle c (w),  c (w ') \rangle = \\
&  2 \sum_{w,  w' \in W_{\frac{\ell}{4}}} \sum_{r \in R} \cpnum (w , w ' ; r) \langle c (w^{-1}),  c (w ') \rangle = \\
& 2 \sum_{w_1 w_2 w_3 w_4 \in R}  \sum_{i=0}^{3} \langle c (w_i^{-1}),  c (w_{i+1}) \rangle \geq^{\Cref{prop: relation pos}} 0.
\end{align*}
\end{proof}

The idea behind the proof of the theorem is to show that if $c$ is a harmonic $1$-cocycle that is not $0$,  then $\langle (M \otimes I_{\mathcal{H}}) \phi_c,  \phi_c \rangle < 0$,  thus getting a contradiction to the proposition above and deducing that the only harmonic $1$-cocycle of $\Gamma$ is $c \equiv 0$ and thus $\Gamma$ has property (T).  This is somewhat in the same spirit as \.{Z}uk's criterion of defining a graph associated to the presentation of the group and proving property (T) by bounding the second eigenvalue of the random walk on that graph.  However,  in our case,  we need to show that $\langle (M \otimes I_{\mathcal{H}}) \phi_c,  \phi_c \rangle < 0$ where the graph does have non-trivial eigenvalues that are strictly positive.  The idea of the proof here is to show that $\phi_c$ has a large projection on an eigenspace of an eigenvalue that is roughly $- \frac{1}{2n-1}$ and that the complement of this projection is concentrated on eigenspaces of eigenvalues that may be positive,  but are small.  The parameters in the theorem are meant to ensure that the negative part will be larger in absolute size than the positive one.

More explicitly,  let $\mathcal{W} = span \lbrace \mathbbm{1}_{V_i^s} : s \in S,  i=0,1 \rbrace  \subseteq \ell^2 (V )$.   Below,  we basically show that 
$$\langle (M P_{\mathcal{W}}) \otimes I_{\mathcal{H}}) \phi_c ,  \phi_c \rangle \leq - \frac{1}{2n-1} (1- \text{ small error}) (\text{non-negligible part of } \Vert \phi_c \Vert^2) $$
and  
$$\langle (M (I - P_{\mathcal{W}}) \otimes I_{\mathcal{H}}) \phi_c ,  \phi_c \rangle \leq ( \text{small positive eigenvales} ) \Vert \phi_c \Vert^2,$$
where $P_{\mathcal{W}}$ denotes the orthogonal projection on $\mathcal{W}$.  Adding the two up and controlling the parameters such that the negative part ``outweighs'' the positive part yields the needed result.

\subsection{The large part in negative eigenspace}

As above, we denote by $P_{\mathcal{U}}$ the orthogonal projection on ${\mathcal{U}}$.  

\begin{lemma}
\label{lemma: phi c i s}
Assume that $(V,E)$ fulfills the conditions of \Cref{thm: main criterion} and that $c$ is a harmonic $1$-cocycle.   For every $s \in S$ and every $i=0,1$,  
$$(P_{span \lbrace  \mathbbm{1}_{V_i^s}  \rbrace} \otimes I_{\mathcal{H}})   \phi_c  (v)= 
\begin{cases}
 \frac{1}{(2n-1)^{\frac{\ell}{4}-1}} \frac{(2n-1)^{\frac{\ell}{4}}-1}{2n-2} c (s) + \phi_c^{i,s} (v) &  v \in V_i^s \\
  \underline{0} & v \notin  V_i^s
  \end{cases}.$$
with 
$$\Vert \phi_c^{i,s}  \Vert^2 \leq \frac{\varepsilon_1^2}{\ell^2} \sum_{v \in V_i^s} m(v)  \Vert \phi_c (v) \Vert^2. $$
\end{lemma}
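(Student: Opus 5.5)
The approach is to compute the projection explicitly and then split it into the harmonic main term, given by \Cref{prop: sum c (w)}, plus an error controlled by condition (1) of \Cref{thm: main criterion}. Since $\mathbbm{1}_{V_i^s}$ is a single vector in $\ell^2(V)$, the operator $P_{span \lbrace \mathbbm{1}_{V_i^s} \rbrace} \otimes I_{\mathcal{H}}$ acts on $\phi \in \ell^2(V;\mathcal{H})$ by
$$\phi \mapsto \frac{1}{m(V_i^s)} \Big( \sum_{u \in V_i^s} m(u) \phi(u) \Big) \mathbbm{1}_{V_i^s},$$
with the vector-valued coefficient in $\mathcal{H}$. This is checked directly from the matrix of the rank-one projection, whose entries are $P(v,u) = \mathbbm{1}_{V_i^s}(v)\mathbbm{1}_{V_i^s}(u) m(u)/m(V_i^s)$. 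In particular the projection of $\phi_c$ vanishes off $V_i^s$. On $V_i^s$ it equals the constant vector $\frac{1}{m(V_i^s)}\sum_{(i,w) \in V_i^s} m((i,w))\, c(w)$.

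Next I compare this weighted average with the uniform average $\frac{1}{\vert V_i^s \vert}\sum_{w \in W_{s,\frac{\ell}{4}}} c(w)$. We have $\vert V_i^s \vert = \vert W_{s,\frac{\ell}{4}} \vert = (2n-1)^{\frac{\ell}{4}-1}$. Since $c$ is harmonic, \Cref{prop: sum c (w)} gives $\sum_{w \in W_{s,\frac{\ell}{4}}} c(w) = \frac{(2n-1)^{\frac{\ell}{4}}-1}{2n-2} c(s)$. So the uniform average is exactly the claimed main term. The remainder is therefore the constant vector
$$e := \sum_{v \in V_i^s} \frac{m(v)}{m(V_i^s)} \Big( 1 - \frac{m(V_i^s)}{\vert V_i^s \vert m(v)} \Big) \phi_c(v),$$
and I define $\phi_c^{i,s} := e \, \mathbbm{1}_{V_i^s}$.

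To bound $e$, condition (1) bounds each bracket in absolute value by $\varepsilon_1/\ell$. Applying the triangle inequality and then Cauchy--Schwarz with respect to the probability weights $m(v)/m(V_i^s)$ gives
$$\Vert e \Vert^2 \leq \frac{\varepsilon_1^2}{\ell^2} \sum_{v \in V_i^s} \frac{m(v)}{m(V_i^s)} \Vert \phi_c(v) \Vert^2 .$$
Finally, $\Vert \phi_c^{i,s} \Vert^2_{\ell^2(V;\mathcal{H})} = m(V_i^s) \Vert e \Vert^2$, and this cancels the denominator to give exactly the stated bound.

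There is no real obstacle here. The only points needing care are identifying the weighted projection correctly in the tensored space, and rewriting the weight difference $\frac{m(v)}{m(V_i^s)} - \frac{1}{\vert V_i^s\vert}$ as $\frac{m(v)}{m(V_i^s)}$ times the quantity controlled by condition (1). That rewriting is what lets Cauchy--Schwarz produce the $m(v)$-weighted sum.
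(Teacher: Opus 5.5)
Your proposal is correct and follows the paper's proof essentially step for step. It uses the same explicit rank-one projection, the same split into the uniform average (evaluated via \Cref{prop: sum c (w)}) plus the weighted remainder rewritten with the factor controlled by condition (1), and the same cancellation of $m(V_i^s)$. The only cosmetic difference is order: you bound the bracket with the triangle inequality before applying Cauchy--Schwarz, whereas the paper applies convexity of the squared norm first and bounds the bracket afterward.
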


\begin{proof}
By the definition of $P_{span \lbrace  \mathbbm{1}_{V_i^s}  \rbrace}$ it holds that
$$(P_{span \lbrace  \mathbbm{1}_{V_i^s}  \rbrace} \otimes I_{\mathcal{H}})   \phi_c  (u) = 
\begin{cases}
\sum_{v \in V_i^s} \frac{m (v)}{m (V_{i}^s)} \phi_c (v) & u \in V_i^s \\
\underline{0} & u \notin  V_i^s
\end{cases}.$$
Thus,  the case where $u \notin  V_i^s$ is clear and we can assume that $u \in  V_i^s$.  For such $u$, 
\begin{align*}
&(P_{span \lbrace  \mathbbm{1}_{V_i^s}  \rbrace} \otimes I_{\mathcal{H}})   \phi_c  (u)  = 
 \sum_{v \in V_i^s} \frac{m (v)}{m (V_{i}^s)} \phi_c (v)  = \\
 & \sum_{v \in V_i^s} \left( \frac{m (v)}{m (V_{i}^s)} - \frac{1}{\vert V_{i}^s \vert} \right) \phi_c (v) + \frac{1}{\vert V_{i}^s \vert} \sum_{v \in V_i^s} \phi_c (v)  = \\
 & \sum_{v \in V_i^s} \left( \frac{m (v)}{m (V_{i}^s)} - \frac{1}{\vert V_{i}^s \vert} \right) \phi_c (v) + \frac{1}{\vert V_{i}^s \vert} \sum_{w \in W_{s, \frac{\ell}{4}}} c (w)  =^{\Cref{prop: sum c (w)}} \\
&  \sum_{v \in V_i^s} \left( \frac{m (v)}{m (V_{i}^s)} - \frac{1}{\vert V_{i}^s \vert} \right) \phi_c (v)  + \frac{1}{\vert V_{i}^s \vert} (\frac{(2n-1)^{\frac{\ell}{4}}-1}{2n-2} ) c (s) =^{\vert V_i^s \vert = (2n-1)^{\frac{\ell}{4}-1}} \\
&   \sum_{v \in V_i^s} \left( \frac{m (v)}{m (V_{i}^s)} - \frac{1}{\vert V_{i}^s \vert} \right) \phi_c (v)  + \frac{1}{(2n-1)^{\frac{\ell}{4}-1}} \frac{(2n-1)^{\frac{\ell}{4}}-1}{2n-2}  c (s).
\end{align*}
We denote 
\begin{align*}
\phi_c^{i,s} (u) =
\begin{cases}
\sum_{v \in V_i^s} \left( \frac{m (v)}{m (V_{i}^s)} - \frac{1}{\vert V_{i}^s \vert} \right) \phi_c (v)  & u \in  V_i^s \\
 \underline{0} & u \notin  V_i^s
 \end{cases} = \\
 \begin{cases}
\sum_{v \in V_i^s} \frac{m (v)}{m (V_{i}^s)} \left( 1 - \frac{m (V_{i}^s)}{\vert V_{i}^s \vert m (v)} \right) \phi_c (v)  & u \in  V_i^s \\
 \underline{0} & u \notin  V_i^s
 \end{cases}
 \end{align*}
and we are left to prove the norm bound on it.  We note that by our assumptions 
$$ \left\vert 1 - \frac{m (V_{i}^s)}{\vert V_{i}^s \vert m (v)}  \right\vert \leq \frac{\varepsilon_1}{\ell}.$$

Thus, 
\begin{align*}
& \Vert \phi_c^{i,s}  \Vert^2 = \\
&m (V_i^s) \left\Vert \sum_{v \in V_i^s} \frac{m (v)}{m (V_{i}^s)} \left( 1 - \frac{m (V_{i}^s)}{\vert V_{i}^s \vert m (v)} \right) \phi_c (v)  \right\Vert^2 \leq \\ 
& \sum_{v \in V_i^s} m(v) \left( 1 - \frac{m (V_{i}^s)}{\vert V_{i}^s \vert m (v)} \right)^2 \Vert \phi_c (v)  \Vert^2  \leq 
 \frac{\varepsilon_1^2}{\ell^2} \sum_{v \in V_i^s} m(v)  \Vert \phi_c (v) \Vert^2. 
\end{align*}
as needed.
\end{proof}

\begin{definition}
\label{def: phi c V}
We denote $\mathcal{W}$ as above to be $\mathcal{W} = span \lbrace \mathbbm{1}_{V_i^s} : s \in S,  i=0,1 \rbrace  \subseteq \ell^2 (V )$ and
define $\phi_c^{\mathcal{W}}$ as
$$\forall s \in S,  \forall v \in V_0^s \cup V_1^s,  \phi_c^{\mathcal{W}} (v) =  \frac{1}{(2n-1)^{\frac{\ell}{4}-1}} \frac{(2n-1)^{\frac{\ell}{4}}-1}{2n-2} c (s).$$
\end{definition}

\begin{lemma}
\label{lemma: phi c V bounds}
Assume that $(V,E)$ fulfills the conditions of \Cref{thm: main criterion},  then
$$\left( \frac{n}{1-\varepsilon_3} \ \frac{\ell^2}{4} \right)\Vert \phi_c^{\mathcal{W}} \Vert^2  \geq   \Vert \phi_c \Vert^2.$$
\end{lemma}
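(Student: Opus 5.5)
The plan is to compare both sides with the single quantity $m(V)\max_{s\in S}\Vert c(s)\Vert^2$. I will bound $\Vert \phi_c\Vert^2$ above by it using \Cref{prop: bound norm c (w)}, and bound $\Vert\phi_c^{\mathcal W}\Vert^2$ below by it using condition (3) of \Cref{thm: main criterion}. Harmonicity of $c$ is not needed here. The estimate only concerns the function $\phi_c^{\mathcal W}$ of \Cref{def: phi c V}.

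\emph{Upper bound for $\Vert\phi_c\Vert^2$.} Every $w\in W_{\frac{\ell}{4}}$ has length $\frac{\ell}{4}$. By \Cref{prop: bound norm c (w)}, $\Vert c(w)\Vert\le \frac{\ell}{4}\max_{s\in S}\Vert c(s)\Vert$. Hence
$$\Vert \phi_c\Vert^2=\sum_{v\in V}m(v)\Vert\phi_c(v)\Vert^2\le \frac{\ell^2}{16}\, m(V)\max_{s\in S}\Vert c(s)\Vert^2 .$$

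\emph{Lower bound for $\Vert\phi_c^{\mathcal W}\Vert^2$.} Write $a=\frac{1}{(2n-1)^{\frac{\ell}{4}-1}}\frac{(2n-1)^{\frac{\ell}{4}}-1}{2n-2}$. By the formula in \Cref{prop: sum c (w)},
$$a=\frac{1+(2n-1)+\dots+(2n-1)^{\frac{\ell}{4}-1}}{(2n-1)^{\frac{\ell}{4}-1}}.$$
The last term of the numerator equals the denominator, so $a\ge 1$. The sets $V_0^s\cup V_1^s$, $s\in S$, partition $V$, and $\phi_c^{\mathcal W}$ is the constant $a\,c(s)$ on each of them. Therefore
$$\Vert\phi_c^{\mathcal W}\Vert^2=a^2\sum_{s\in S}\big(m(V_0^s)+m(V_1^s)\big)\Vert c(s)\Vert^2 .$$
By condition (3), each $m(V_i^s)\ge \frac{1-\varepsilon_3}{4n}m(V)$. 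Using $a\ge1$ and bounding the sum over $s$ from below by its largest term gives
$$\Vert\phi_c^{\mathcal W}\Vert^2\ge \frac{1-\varepsilon_3}{2n}\,m(V)\sum_{s\in S}\Vert c(s)\Vert^2\ge\frac{1-\varepsilon_3}{2n}\,m(V)\max_{s\in S}\Vert c(s)\Vert^2 .$$

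\emph{Combining.} The two bounds give
$$\Vert\phi_c\Vert^2\le \frac{\ell^2}{16}\cdot\frac{2n}{1-\varepsilon_3}\Vert\phi_c^{\mathcal W}\Vert^2=\frac{n}{1-\varepsilon_3}\frac{\ell^2}{8}\Vert\phi_c^{\mathcal W}\Vert^2 .$$
This is stronger than the claimed inequality by a factor of $2$.

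There is no real obstacle in this argument. The only points needing care are checking $a\ge 1$ from the closed form in \Cref{prop: sum c (w)}, and noting that each $v$ lies in exactly one $V_0^s\cup V_1^s$, so the norm of $\phi_c^{\mathcal W}$ splits as a sum over $s$.
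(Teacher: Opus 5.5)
Your proof is correct and follows the paper's argument. Both proofs compare $\Vert\phi_c\Vert^2$ and $\Vert\phi_c^{\mathcal W}\Vert^2$ with $m(V)\max_{s\in S}\Vert c(s)\Vert^2$, using the word-length bound on $\Vert c(w)\Vert$, the fact that the constant $a$ is at least $1$, and condition (3). Your sharper constant comes only from keeping both the $i=0$ and $i=1$ contributions, where the paper's final step discards one of them; that is where your extra factor of $2$ comes from.
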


\begin{proof}
First,  we note that
\begin{align*}
& \Vert \phi_c^{\mathcal{W}} \Vert^2 = 
\sum_{i=0}^1 \sum_{s \in S} m (V_i^s) \left( \frac{1}{(2n-1)^{\frac{\ell}{4}-1}} \frac{(2n-1)^{\frac{\ell}{4}}-1}{2n-2} \right)^2  \Vert c (s) \Vert^2 \geq  \\
& \sum_{i=0}^1 \sum_{s \in S} m (V_i^s) \Vert c (s) \Vert^2  = \sum_{i=0}^1 \sum_{s \in S} \frac{m (V_i^s)}{m(V)} m (V) \Vert c (s) \Vert^2 \geq \\
&  \sum_{i=0}^1 \sum_{s \in S} \frac{1-\varepsilon_3}{4n} m (V) \Vert c (s) \Vert^2 \geq \frac{1-\varepsilon_3}{4n} m (V) \max_{s \in S} \Vert c (s) \Vert^2.
\end{align*}

Second  we note that 
\begin{align*}
\Vert \phi_c \Vert^2 \leq m (V) \max_{v \in V} \Vert \phi_c (v) \Vert^2 = 
m (V) \max_{w \in W_{\frac{\ell}{4}}} \Vert c (w) \Vert^2 \leq^{\Cref{prop: bound norm c (w)}} m (V) \frac{\ell^2}{16} \max_{s \in S} \Vert c (s) \Vert^2.
\end{align*}
Thus by the first inequality
\begin{align*}
\Vert \phi_c^{\mathcal{W}} \Vert^2 \geq \frac{1-\varepsilon_3}{4n} m (V) \max_{s \in S} \Vert c (s) \Vert^2 = 
 \frac{1-\varepsilon_3}{4n} \frac{16}{\ell^2} \left(\frac{\ell^2}{16} m (V) \max_{s \in S} \Vert c (s) \Vert^2 \right) \geq 
  \frac{1-\varepsilon_3}{n} \frac{4}{\ell^2} \Vert \phi_c \Vert^2,
\end{align*}
as needed.
\end{proof}

\begin{lemma}
\label{lemma: diff of P V and phi c V}
Assume that $(V,E)$ fulfills the conditions of \Cref{thm: main criterion} and that $c$ is a harmonic $1$-cocycle.  Then
$$\Vert (P_{\mathcal{W}} \otimes I_{\mathcal{H}}) \phi_c - \phi_c^{\mathcal{W}} \Vert^2 \leq  \frac{n}{4}  \frac{\varepsilon_1^2}{1-\varepsilon_3}  \Vert \phi_c^{\mathcal{W}} \Vert^2.$$
\end{lemma}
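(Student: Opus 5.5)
Since the sets $V_i^s$ ($s\in S$, $i=0,1$) partition $V$, the indicator functions $\mathbbm{1}_{V_i^s}$ are pairwise orthogonal in $\ell^2(V)$. Hence $P_{\mathcal{W}} = \sum_{i,s} P_{span\lbrace \mathbbm{1}_{V_i^s}\rbrace}$, and the same decomposition holds after tensoring with $I_{\mathcal{H}}$. On $V_i^s$ the function $\phi_c^{\mathcal{W}}$ equals the constant $\frac{1}{(2n-1)^{\frac{\ell}{4}-1}} \frac{(2n-1)^{\frac{\ell}{4}}-1}{2n-2} c(s)$. By \Cref{lemma: phi c i s}, the difference $(P_{\mathcal{W}} \otimes I_{\mathcal{H}}) \phi_c - \phi_c^{\mathcal{W}}$ is therefore exactly $\sum_{i,s} \phi_c^{i,s}$. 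These summands have disjoint supports, so
$$\Vert (P_{\mathcal{W}} \otimes I_{\mathcal{H}}) \phi_c - \phi_c^{\mathcal{W}} \Vert^2 = \sum_{i,s} \Vert \phi_c^{i,s}\Vert^2 \leq \frac{\varepsilon_1^2}{\ell^2}\sum_{i,s}\sum_{v\in V_i^s} m(v)\Vert\phi_c(v)\Vert^2 = \frac{\varepsilon_1^2}{\ell^2}\Vert \phi_c\Vert^2,$$
where the inequality is the norm bound from \Cref{lemma: phi c i s}.

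Next I invoke \Cref{lemma: phi c V bounds}, which gives $\Vert\phi_c\Vert^2 \le \frac{n}{1-\varepsilon_3}\frac{\ell^2}{4}\Vert\phi_c^{\mathcal{W}}\Vert^2$. Substituting this into the display, the factors $\ell^2$ cancel, and the bound becomes $\frac{n}{4}\frac{\varepsilon_1^2}{1-\varepsilon_3}\Vert\phi_c^{\mathcal{W}}\Vert^2$, which is the claimed inequality.

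The main point to check is the first step: the sum of the single-block projections must reproduce $P_{\mathcal{W}}$, and this requires that the sets $V_i^s$ really partition $V$. This holds because every word in $W_{\frac{\ell}{4}}$ has a unique first letter. A second point is that $\phi_c^{\mathcal{W}}$ agrees blockwise with the constant term in \Cref{lemma: phi c i s}, for both $i=0$ and $i=1$; this is immediate from the definition. Apart from these checks the argument is bookkeeping.
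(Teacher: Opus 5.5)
Your proposal is correct and follows the paper's proof essentially step for step. The paper likewise uses the pairwise orthogonality of the $\mathbbm{1}_{V_i^s}$ to write the difference as $\sum_{i,s}\phi_c^{i,s}$, bounds $\sum_{i,s}\Vert\phi_c^{i,s}\Vert^2$ by $\frac{\varepsilon_1^2}{\ell^2}\Vert\phi_c\Vert^2$ via \Cref{lemma: phi c i s}, and then applies \Cref{lemma: phi c V bounds} so that the factors of $\ell^2$ cancel.
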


\begin{proof}
We note that for $(i,s) \neq (i', s')$,  $\mathbbm{1}_{V_i^s} \perp \mathbbm{1}_{V_{i'}^{s'}}$,  thus
\begin{align*}
& \Vert (P_{\mathcal{W}} \otimes I_{\mathcal{H}}) \phi_c - \phi_c^{\mathcal{W}} \Vert^2 = 
 \Vert \sum_{i=0}^1 \sum_{s \in S} (P_{span \lbrace  \mathbbm{1}_{V_i^s}  \rbrace} \otimes I_{\mathcal{H}}) \phi_c - \phi_c^{\mathcal{W}} \Vert^2 = \\
& \sum_{i=0}^1 \sum_{s \in S} \Vert \phi_c^{i,s}  \Vert^2 \leq^{\Cref{lemma: phi c i s}} 
 \sum_{i=0}^1 \sum_{s \in S} \frac{\varepsilon_1^2}{\ell^2} \sum_{v \in V_i^s} m(v)  \Vert \phi_c (v) \Vert^2 = \\
& \frac{\varepsilon_1^2}{\ell^2}  \Vert \phi_c \Vert^2 \leq^{\Cref{lemma: phi c V bounds}} 
\frac{n}{4 } \frac{\varepsilon_1^2}{1-\varepsilon_3}  \Vert \phi_c^{\mathcal{W}} \Vert^2.
\end{align*}
\end{proof}

\begin{corollary}
\label{corollary: bound on P V phi c}
Assume that $(V,E)$ fulfills the conditions of \Cref{thm: main criterion} and that $c$ is a harmonic $1$-cocycle.  Then
$$\Vert (P_{\mathcal{W}} \otimes I_{\mathcal{H}}) \phi_c \Vert \leq  (1 +\frac{\sqrt{n}}{2}  \frac{\varepsilon_1}{\sqrt{1-\varepsilon_3}}  )   \Vert \phi_c^{\mathcal{W}} \Vert.$$
\end{corollary}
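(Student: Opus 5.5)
This follows from \Cref{lemma: diff of P V and phi c V} by the triangle inequality in $\ell^2 (V ; \mathcal{H})$. We write
$$(P_{\mathcal{W}} \otimes I_{\mathcal{H}}) \phi_c = \phi_c^{\mathcal{W}} + \left( (P_{\mathcal{W}} \otimes I_{\mathcal{H}}) \phi_c - \phi_c^{\mathcal{W}} \right).$$
Taking norms gives
$$\Vert (P_{\mathcal{W}} \otimes I_{\mathcal{H}}) \phi_c \Vert \leq \Vert \phi_c^{\mathcal{W}} \Vert + \Vert (P_{\mathcal{W}} \otimes I_{\mathcal{H}}) \phi_c - \phi_c^{\mathcal{W}} \Vert .$$

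To bound the second term, we take square roots in \Cref{lemma: diff of P V and phi c V}. Since $c$ is a harmonic $1$-cocycle and the conditions of \Cref{thm: main criterion} hold, that lemma applies and gives
$$\Vert (P_{\mathcal{W}} \otimes I_{\mathcal{H}}) \phi_c - \phi_c^{\mathcal{W}} \Vert \leq \sqrt{\frac{n}{4} \frac{\varepsilon_1^2}{1-\varepsilon_3}} \, \Vert \phi_c^{\mathcal{W}} \Vert = \frac{\sqrt{n}}{2} \frac{\varepsilon_1}{\sqrt{1-\varepsilon_3}} \Vert \phi_c^{\mathcal{W}} \Vert .$$
This uses $\varepsilon_1 > 0$ and $1-\varepsilon_3 > 0$. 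Substituting into the triangle inequality and factoring out $\Vert \phi_c^{\mathcal{W}} \Vert$ yields the stated bound.

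There is essentially no obstacle here. The only point to check is that the square root of the constant in \Cref{lemma: diff of P V and phi c V} simplifies exactly to the factor $\frac{\sqrt{n}}{2} \frac{\varepsilon_1}{\sqrt{1-\varepsilon_3}}$ appearing in the statement, which it does.
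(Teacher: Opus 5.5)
Your proposal is correct and follows the paper's proof exactly: the paper also just applies the triangle inequality together with \Cref{lemma: diff of P V and phi c V}. Your computation $\sqrt{\frac{n}{4}\frac{\varepsilon_1^2}{1-\varepsilon_3}} = \frac{\sqrt{n}}{2}\frac{\varepsilon_1}{\sqrt{1-\varepsilon_3}}$ supplies the only detail the paper leaves implicit.
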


\begin{proof}
This inequality readily follows from the preceding lemma and the triangle inequality.
\end{proof}

\begin{lemma}
\label{lemma: rw on phi c V}
Assume that $(V,E)$ fulfills the conditions of \Cref{thm: main criterion} and that $c$ is a harmonic $1$-cocycle,  then
$$\langle (M \otimes I_{\mathcal{H}} )\phi_c^{\mathcal{W}},  \phi_c^{\mathcal{W}} \rangle \leq \left(- \frac{1}{2n-1} + (\ell^{-2} \varepsilon_2) \frac{4 \sqrt{n}}{\sqrt{1-\varepsilon_3}} \right) \Vert \phi_c^{\mathcal{W}} \Vert^2 .$$
\end{lemma}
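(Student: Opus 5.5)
The plan is to compute $\langle (M\otimes I_{\mathcal H})\phi_c^{\mathcal W},\phi_c^{\mathcal W}\rangle$ directly from the edge formula
$$\langle (M \otimes I_{\mathcal{H}}) \phi , \phi \rangle = 2 \sum_{\lbrace v,u \rbrace \in E} m (\lbrace u,v \rbrace ) \langle \phi (v), \phi (u) \rangle .$$
Two facts make this tractable: $\phi_c^{\mathcal W}$ is constant on each block $V_0^s\cup V_1^s$, and harmonicity lets us sum over the second letter. Write $a=\frac{1}{(2n-1)^{\frac{\ell}{4}-1}}\frac{(2n-1)^{\frac{\ell}{4}}-1}{2n-2}$, so that $\phi_c^{\mathcal W}(v)=a\,c(s)$ for $v\in V_0^s\cup V_1^s$. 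Every edge joins some $V_0^s$ to some $V_1^{s'}$, and by the Observation $s\neq s'$. Hence
$$\langle (M\otimes I_{\mathcal H})\phi_c^{\mathcal W},\phi_c^{\mathcal W}\rangle = 2a^2\sum_{s\neq s'} e_{s,s'}\langle c(s),c(s')\rangle ,$$
where $e_{s,s'}$ is the total edge multiplicity in $E_{s,s'}$.

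The key step is to split the factor $2$ as two copies of $e_{s,s'}$ and count each copy from a different side. Since $(V_{s,s'},E_{s,s'})$ is bipartite,
$$e_{s,s'}=\sum_{v\in V_0^s} m_{s,s'}(v)=\sum_{v\in V_1^{s'}} m_{s,s'}(v).$$
Condition (2) then gives
$$e_{s,s'}=\frac{m(V_0^s)}{2n-1}+\delta^0_{s,s'}=\frac{m(V_1^{s'})}{2n-1}+\delta^1_{s,s'},$$
with
$$|\delta^0_{s,s'}|\le \frac{\ell^{-2}\varepsilon_2}{2n-1}m(V_0^s),\qquad |\delta^1_{s,s'}|\le \frac{\ell^{-2}\varepsilon_2}{2n-1}m(V_1^{s'}).$$
For the first copy we fix $s$ and use harmonicity, $\sum_{s'\neq s}c(s')=-c(s)$, to get the main term $-\frac{1}{2n-1}a^2\sum_s m(V_0^s)\Vert c(s)\Vert^2$. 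For the second copy we fix $s'$ and sum over $s\neq s'$, which gives $-\frac{1}{2n-1}a^2\sum_{s'} m(V_1^{s'})\Vert c(s')\Vert^2$. Adding the two, the main terms equal exactly
$$-\frac{1}{2n-1}\,a^2\sum_{s}\big(m(V_0^s)+m(V_1^s)\big)\Vert c(s)\Vert^2=-\frac{1}{2n-1}\Vert\phi_c^{\mathcal W}\Vert^2 .$$

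It remains to bound the error terms $a^2\sum_{s\neq s'}(|\delta^0_{s,s'}|+|\delta^1_{s,s'}|)\Vert c(s)\Vert\Vert c(s')\Vert$. Using the bound on $\delta^0$ and $m(V_0^s)\le m(V)$, the first error is at most
$$\frac{\ell^{-2}\varepsilon_2}{2n-1}a^2 m(V)\Big(\sum_s\Vert c(s)\Vert\Big)^2\le \frac{\ell^{-2}\varepsilon_2}{2n-1}a^2m(V)\,2n\sum_s\Vert c(s)\Vert^2$$
by Cauchy--Schwarz. The second error is handled in the same way. Condition (3) gives
$$\Vert\phi_c^{\mathcal W}\Vert^2\ge a^2\,\frac{2(1-\varepsilon_3)}{4n}\,m(V)\sum_s\Vert c(s)\Vert^2,$$
and dividing converts the error into $C(n)\,\frac{\ell^{-2}\varepsilon_2}{1-\varepsilon_3}\Vert\phi_c^{\mathcal W}\Vert^2$.

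The main obstacle is matching the stated constant $\frac{4\sqrt n}{\sqrt{1-\varepsilon_3}}$, that is, getting only a square root of $\frac{1}{1-\varepsilon_3}$. The crude division above loses a full factor $\frac{1}{1-\varepsilon_3}$. To fix this, I would apply Cauchy--Schwarz asymmetrically: bound $\sum_s m(V_0^s)\Vert c(s)\Vert\,\Vert c(s')\Vert$ by
$$\Big(\sum_s m(V_0^s)\Vert c(s)\Vert^2\Big)^{1/2}\Big(\sum_s m(V_0^s)\Vert c(s')\Vert^2\Big)^{1/2}.$$
The first factor is at most $a^{-1}\Vert\phi_c^{\mathcal W}\Vert$. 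For the second factor, I would compare $m(V)\Vert c(s')\Vert^2$ with $\frac{4n}{1-\varepsilon_3}m(V_1^{s'})\Vert c(s')\Vert^2$ using condition (3). This gives a single factor $\sqrt{4n/(1-\varepsilon_3)}$ per side, and summing the two sides should yield the stated constant, up to bookkeeping of the $2n-1$ factors.
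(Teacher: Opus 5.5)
Your proposal is correct. Its core is the paper's argument with different bookkeeping. The paper computes pointwise $(M\otimes I_{\mathcal{H}})\phi_c^{\mathcal{W}}=-\frac{1}{2n-1}\phi_c^{\mathcal{W}}+\psi$ at vertices of $V_0^s$ and $V_1^{s'}$, using harmonicity. Pairing this with $\phi_c^{\mathcal{W}}$ is exactly your device of counting one copy of $e_{s,s'}$ from each side, and your error term is precisely $\langle \psi,\phi_c^{\mathcal{W}}\rangle$.

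The routes differ only in how this term is estimated. The paper bounds $\psi$ pointwise by $L\,\ell^{-2}\varepsilon_2\max_s\Vert c(s)\Vert$. It then uses $1\le L\le 2$ together with $m(V)\max_s\Vert c(s)\Vert^2\le\frac{4n}{1-\varepsilon_3}\Vert\phi_c^{\mathcal{W}}\Vert^2$, and finishes with $\langle\psi,\phi_c^{\mathcal{W}}\rangle\le\Vert\psi\Vert\,\Vert\phi_c^{\mathcal{W}}\Vert$. Only $\Vert\psi\Vert$ needs condition (3), which is where the single square root comes from.

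Your weighted Cauchy--Schwarz also closes, but you must account for the extra $\sqrt{2n}$ incurred when summing over $s'$. Write $\eta=\ell^{-2}\varepsilon_2$ and $A_i=\sum_s m(V_i^s)\Vert c(s)\Vert^2$, so that $\Vert\phi_c^{\mathcal{W}}\Vert^2=a^2(A_0+A_1)$. Then the $V_0$-side error is at most
\[
a^2\frac{\eta}{2n-1}\Big(\sum_s m(V_0^s)\Vert c(s)\Vert\Big)\Big(\sum_{s'}\Vert c(s')\Vert\Big)\le a^2\frac{\eta}{2n-1}\sqrt{m(V)A_0}\,\sqrt{2n}\,\sqrt{\frac{4n\,A_1}{(1-\varepsilon_3)\,m(V)}}=\frac{2\sqrt{2}\,n}{2n-1}\,\frac{\eta}{\sqrt{1-\varepsilon_3}}\,a^2\sqrt{A_0A_1}.
\]
The $V_1$-side is symmetric, and $2\sqrt{A_0A_1}\le A_0+A_1$ gives a total error of at most $\frac{2\sqrt{2}\,n}{2n-1}\frac{\eta}{\sqrt{1-\varepsilon_3}}\Vert\phi_c^{\mathcal{W}}\Vert^2$. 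Since $\frac{2\sqrt{2}\,n}{2n-1}\le\frac{4\sqrt{2}}{3}<4\sqrt{n}$ for $n\ge 2$, the stated inequality follows with room.

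Your route needs no bounds on $a$, because it cancels, and it gives an $n$-independent constant; the price is a two-factor Cauchy--Schwarz. The paper's route is shorter and reuses the estimate from the lemma comparing $\Vert\phi_c\Vert$ with $\Vert\phi_c^{\mathcal{W}}\Vert$. As you noticed, your crude first estimate alone would not give the stated constant.
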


\begin{proof}
In order to avoid carrying messy constants,  we denote in this proof  $L =  \frac{1}{(2n-1)^{\frac{\ell}{4}-1}} \frac{(2n-1)^{\frac{\ell}{4}}-1}{2n-2}$ and note that
$\forall v \in V_{0}^s \cup V_1^{s'},   \phi_c^{\mathcal{W}} (v) =L c(s)$. 

Fix $s \in S$ and $v \in V_0^s$.  Then
\begin{align*}
& (M \otimes I_{\mathcal{H}} )  \phi_c^{\mathcal{W}} (v) = \frac{1}{m(v)} \sum_{\lbrace u,v\rbrace \in E} m (\lbrace u,v\rbrace) \phi_c^{\mathcal{W}} (u) =
\frac{1}{m(v)} \sum_{s' \in S,  s' \neq s} \sum_{u \in V_1^{s'},  \lbrace u,v\rbrace \in E} m (\lbrace u,v\rbrace) L c (s') = \\
& \frac{1}{m(v)} \sum_{s' \in S,  s' \neq s}  m_{s,s'} (v) L c (s') = 
L \sum_{s' \in S,  s' \neq s} \frac{m_{s,s'} (v)}{m (v)}  c (s') = \\
& L \sum_{s' \in S,  s' \neq s} \frac{1}{2n-1}  c (s') +  L \sum_{s' \in S,  s' \neq s} \left(\frac{m_{s,s'} (v)}{m (v)} - \frac{1}{2n-1} \right)  c (s') =^{c \text{ is harmonic}} \\
&- \frac{1}{2n-1} L c (s) +  L \sum_{s' \in S,  s' \neq s} \left(\frac{m_{s,s'} (v)}{m (v)} - \frac{1}{2n-1} \right)  c (s') = \\
&- \frac{1}{2n-1} \phi_c^{\mathcal{W}} (v)  +  L \sum_{s' \in S,  s' \neq s} \left(\frac{m_{s,s'} (v)}{m (v)} - \frac{1}{2n-1} \right)  c (s') 
\end{align*}

By a similar computation,  for every $s' \in S$ and every $v \in V_1^{s'}$,
$$(M \otimes I_{\mathcal{H}} )  \phi_c^{\mathcal{W}} (v)  =  - \frac{1}{2n-1} \phi_c^{\mathcal{W}} (v)  +  L \sum_{s \in S,  s \neq s'} \left(\frac{m_{s,s'} (v)}{m (v)} - \frac{1}{2n-1} \right)  c (s) .$$

Define 
$$\psi (v) = 
\begin{cases}
 L \sum_{s' \in S,  s' \neq s} \left(\frac{m_{s,s'} (v)}{m (v)} - \frac{1}{2n-1} \right)  c (s')  & v \in V_0^s \\
  L \sum_{s \in S,  s \neq s'} \left(\frac{m_{s,s'} (v)}{m (v)} - \frac{1}{2n-1} \right)  c (s)  & v \in V_1^{s '}
\end{cases}.$$

With this notation, we showed that $(M \otimes I_{\mathcal{H}})  \phi_c^{\mathcal{W}} = - \frac{1}{2n-1} \phi_c^{\mathcal{W}} + \psi$.  We recall that by our assumptions, it holds that for every $s,s' \in S,  s \neq s'$,  
$$\forall v \in V_{s,s'}, \left\vert \frac{m_{s,s'} (v)}{m (v)} - \frac{1}{2n-1} \right\vert \leq \frac{(\ell^{-2} \varepsilon_2)}{2n-1}  .$$
Thus for every $s \in S$ and every $v \in V_0^s$, 
\begin{align*}
& \Vert \psi (v) \Vert^2 \leq 
\vert L \vert^2 \left( \sum_{s' \in S,  s' \neq s}  \left\vert \frac{m_{s,s'} (v)}{m (v)} - \frac{1}{2n-1} \right\vert \Vert c (s') \Vert \right)^2 \leq \\
& \vert L \vert^2 \left(\left( \max_{s'' \in S} \Vert c (s'') \Vert \right) \sum_{s' \in S,  s' \neq s}   \left\vert \frac{m_{s,s'} (v)}{m (v)} - \frac{1}{2n-1} \right\vert  \right)^2 \leq \\
& \vert L \vert^2 \left(\left( \max_{s'' \in S} \Vert c (s'') \Vert \right) (\ell^{-2} \varepsilon_2)  \right)^2 =  \vert L \vert^2 \left( \max_{s'' \in S} \Vert c (s'') \Vert^2 \right) (\ell^{-2} \varepsilon_2)^2 \leq^{\vert L \vert \leq 2} 
4 \left( \max_{s'' \in S} \Vert c (s'') \Vert^2 \right) (\ell^{-2} \varepsilon_2)^2.
\end{align*}

Similarly,  for every $s' \in S$ and every $v \in V_1^{s'}$, 
$$\Vert \psi (v) \Vert^2 \leq 4 \left( \max_{s'' \in S} \Vert c (s'') \Vert^2 \right) (\ell^{-2} \varepsilon_2)^2.$$

Thus
\begin{align*}
& \Vert \psi \Vert^2 = \sum_{v \in V} m (v) \Vert \psi (v) \Vert^2 \leq 
(\ell^{-2} \varepsilon_2)^2 \sum_{v \in V} m(v) 4 \left( \max_{s'' \in S} \Vert c (s'') \Vert^2 \right)  =  \\
&4 (\ell^{-2} \varepsilon_2)^2 m(V) \left( \max_{s'' \in S} \Vert c (s'') \Vert^2 \right) \leq^{\Cref{lemma: phi c V bounds}} \\
& 4 (\ell^{-2} \varepsilon_2)^2 \frac{4n}{1-\varepsilon_3} \Vert \phi_c^{\mathcal{W}} \Vert^2 =   (\ell^{-2} \varepsilon_2)^2 \frac{16n}{1-\varepsilon_3} \Vert \phi_c^{\mathcal{W}} \Vert^2
\end{align*}
and it follows that 
$$\Vert \psi \Vert \leq  (\ell^{-2} \varepsilon_2) \frac{4 \sqrt{n}}{\sqrt{1-\varepsilon_3}} \Vert \phi_c^{\mathcal{W}} \Vert .$$

Combining all of the above yields that 
\begin{align*}
& \langle (M \otimes I_{\mathcal{H}} ) \phi_c^{\mathcal{W}},   \phi_c^{\mathcal{W}} \rangle = 
\langle - \frac{1}{2n-1}  \phi_c^{\mathcal{W}} + \psi,   \phi_c^{\mathcal{W}} \rangle \leq  \\
& - \frac{1}{2n-1} \Vert \phi_c^{\mathcal{W}} \Vert^2 + \Vert  \psi \Vert \Vert  \phi_c^{\mathcal{W}} \Vert \leq \\
& - \frac{1}{2n-1} \Vert \phi_c^{\mathcal{W}} \Vert^2 + (\ell^{-2} \varepsilon_2) \frac{4 \sqrt{n}}{\sqrt{1-\varepsilon_3}}  \Vert  \phi_c^{\mathcal{W}} \Vert^2= \\
& \left(- \frac{1}{2n-1} + (\ell^{-2} \varepsilon_2) \frac{4 \sqrt{n}}{\sqrt{1-\varepsilon_3}} \right) \Vert  \phi_c^{\mathcal{W}} \Vert^2.
\end{align*}
\end{proof}

Combining all the above results yields the main result of this part:
\begin{theorem}
\label{thm: aux1}
Assume that $(V,E)$ fulfills the conditions of \Cref{thm: main criterion} and that $c$ is a harmonic $1$-cocycle,  then
$$\langle (M P_{\mathcal{W}} \otimes I_{\mathcal{H}}) \phi_c,  (P_{\mathcal{W}} \otimes I_{\mathcal{H}} )\phi_c \rangle \leq \left(- \frac{1}{2n-1} +  \frac{\varepsilon_1+ 4 (\ell^{-2} \varepsilon_2)}{\sqrt{1-\varepsilon_3}} \sqrt{n} +   \frac{\varepsilon_1^2}{1-\varepsilon_3} \frac{n}{4}  \right) \Vert \phi_c^{\mathcal{W}} \Vert^2.$$
\end{theorem}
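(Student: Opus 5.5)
Write $\Phi = (P_{\mathcal{W}} \otimes I_{\mathcal{H}})\phi_c$, $\Phi_0 = \phi_c^{\mathcal{W}}$ and $\Delta = \Phi - \Phi_0$. Since $P_{\mathcal{W}}$ is idempotent, the left-hand side of the theorem is $\langle (M\otimes I_{\mathcal{H}})\Phi, \Phi\rangle$. The plan is to expand this around $\Phi_0$ using self-adjointness of $M\otimes I_{\mathcal{H}}$:
$$\langle (M\otimes I)\Phi,\Phi\rangle = \langle (M\otimes I)\Phi_0,\Phi_0\rangle + 2\langle (M\otimes I)\Phi_0,\Delta\rangle + \langle (M\otimes I)\Delta,\Delta\rangle .$$
The three terms are then bounded separately.

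For the first term I will invoke \Cref{lemma: rw on phi c V} directly. This gives the bound $\left(-\frac{1}{2n-1} + (\ell^{-2}\varepsilon_2)\frac{4\sqrt{n}}{\sqrt{1-\varepsilon_3}}\right)\Vert\Phi_0\Vert^2$, which accounts for the $-\frac{1}{2n-1}$ and the $4(\ell^{-2}\varepsilon_2)\sqrt{n}/\sqrt{1-\varepsilon_3}$ parts of the target constant.

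For the last term, $\Vert M\otimes I\Vert\le 1$ gives $\langle (M\otimes I)\Delta,\Delta\rangle\le \Vert\Delta\Vert^2$. By \Cref{lemma: diff of P V and phi c V} this is at most $\frac{n}{4}\frac{\varepsilon_1^2}{1-\varepsilon_3}\Vert\Phi_0\Vert^2$, which is exactly the quadratic error term in the statement.

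The cross term is the only place that needs care, because of the constant. Cauchy--Schwarz together with $\Vert M\otimes I\Vert\le1$ gives $2\Vert\Phi_0\Vert\Vert\Delta\Vert \le 2\cdot\frac{\sqrt n}{2}\frac{\varepsilon_1}{\sqrt{1-\varepsilon_3}}\Vert\Phi_0\Vert^2 = \frac{\varepsilon_1\sqrt n}{\sqrt{1-\varepsilon_3}}\Vert\Phi_0\Vert^2$. This matches the $\varepsilon_1\sqrt n/\sqrt{1-\varepsilon_3}$ contribution claimed, so the factor $2$ from the expansion is absorbed by the $\frac{\sqrt n}{2}$ coming from the square root of the lemma's bound. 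One could refine the cross term using $(M\otimes I)\Phi_0 = -\frac{1}{2n-1}\Phi_0+\psi$ from the proof of \Cref{lemma: rw on phi c V}, but the crude operator-norm bound already suffices.

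Summing the three estimates gives exactly the stated constant. The main obstacle is just this bookkeeping of the cross-term constant, and the only structural inputs needed are the self-adjointness of $M$ and the idempotence of $P_{\mathcal W}$.
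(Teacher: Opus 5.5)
Your proposal is correct and matches the paper's proof. The paper uses the same three-term expansion around $\phi_c^{\mathcal{W}}$ and bounds the main term by \Cref{lemma: rw on phi c V}. It bounds the cross and quadratic terms via Cauchy--Schwarz, $\Vert M\otimes I_{\mathcal{H}}\Vert\le 1$ and \Cref{lemma: diff of P V and phi c V}, with the same constant bookkeeping, including the factor $2$ being absorbed by $\sqrt{n}/2$.
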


\begin{proof}
\begin{align*}
& \langle (M P_{\mathcal{W}} \otimes I_{\mathcal{H}}) \phi_c,  (P_{\mathcal{W}} \otimes I_{\mathcal{H}} ) \phi_c \rangle = \\
& \langle (M  \otimes I_{\mathcal{H}}) \phi_c^{\mathcal{W}},    \phi_c^{\mathcal{W}} \rangle + \\
& 2 \langle (M  \otimes I_{\mathcal{H}}) ((P_{\mathcal{W}} \otimes I_{\mathcal{H}} ) \phi_c - \phi_c^{\mathcal{W}}),  \phi_c^{\mathcal{W}}   \rangle +\\  
& \langle (M  \otimes I_{\mathcal{H}}) ((P_{\mathcal{W}} \otimes I_{\mathcal{H}} ) \phi_c - \phi_c^{\mathcal{W}}),  (P_{\mathcal{W}} \otimes I_{\mathcal{H}} ) \phi_c - \phi_c^{\mathcal{W}}  \rangle \leq \\
& \langle (M  \otimes I_{\mathcal{H}}) \phi_c^{\mathcal{W}},    \phi_c^{\mathcal{W}} \rangle + \\
&  2 \Vert (P_{\mathcal{W}} \otimes I_{\mathcal{H}} ) \phi_c - \phi_c^{\mathcal{W}} \Vert \Vert \phi_c^{\mathcal{W}} \Vert + \Vert (P_{\mathcal{W}} \otimes I_{\mathcal{H}} ) \phi_c - \phi_c^{\mathcal{W}} \Vert^2 \leq^{\Cref{lemma: diff of P V and phi c V}} \\
 & \langle (M  \otimes I_{\mathcal{H}}) \phi_c^{\mathcal{W}},    \phi_c^{\mathcal{W}} \rangle + 2 \sqrt{\frac{n}{4}  \frac{\varepsilon_1^2}{1-\varepsilon_3}} \Vert \phi_c^{\mathcal{W}} \Vert^2 + \frac{n}{4}  \frac{\varepsilon_1^2}{1-\varepsilon_3} \Vert \phi_c^{\mathcal{W}} \Vert^2 = \\
 & \langle (M  \otimes I_{\mathcal{H}}) \phi_c^{\mathcal{W}},    \phi_c^{\mathcal{W}} \rangle + \left( \sqrt{n}  \frac{\varepsilon_1}{\sqrt{1-\varepsilon_3}} +  \frac{n}{4}  \frac{\varepsilon_1^2}{1-\varepsilon_3} \right) \Vert \phi_c^{\mathcal{W}} \Vert^2  \leq^{\Cref{lemma: rw on phi c V}} \\
& \left(- \frac{1}{2n-1} +  \frac{\varepsilon_1+ 4 (\ell^{-2} \varepsilon_2)}{\sqrt{1-\varepsilon_3}} \sqrt{n} +   \frac{\varepsilon_1^2}{1-\varepsilon_3} \frac{n}{4}  \right) \Vert \phi_c^{\mathcal{W}} \Vert^2.
\end{align*}
\end{proof}

\subsection{The eigenspaces of small positive eigenvalues}

\begin{lemma}
\label{lemma: sum of norms}
For $\phi \in \ell^2 (V)$ and $s,s' \in S,  s \neq s'$,  we denote $\phi_{s,s'}$ to be the restriction of $\phi$ to $V_{s,s'}$.  Then 
$$\Vert \phi \Vert^2 = \sum_{s,s ' \in S,  s \neq s'} \Vert \phi_{s,s'} \Vert_{\ell^2 (V_{s,s'})}^{2}.$$
\end{lemma}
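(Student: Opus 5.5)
The proof is a degree-counting identity. It rests on the edge partition of $(V,E)$ into the subgraphs $(V_{s,s'},E_{s,s'})$, $s \neq s'$.

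First, every edge of $(V,E)$ joins some $(0,w_0)\in V_0^s$ to some $(1,w_1)\in V_1^{s'}$, where $s,s'$ are the first letters of $w_0,w_1$. By the Observation preceding the definition of $V_{s,s'}$, such an edge can exist only if $s\neq s'$. Hence every edge lies in exactly one of the induced sub-multigraphs $E_{s,s'}$ with $s\neq s'$. Conversely, an edge of $E_{s,s'}$ has both endpoints in $V_0^s\cup V_1^{s'}$. Since the graph is bipartite, such an edge joins $V_0^s$ to $V_1^{s'}$, so it is an edge of exactly this type. Therefore, for every pair of vertices $\{u,v\}$,
\[
m(\{u,v\})=\sum_{s\neq s'} m_{s,s'}(\{u,v\}),
\]
where $m_{s,s'}(\{u,v\})$ is set to $0$ when $\{u,v\}\not\subseteq V_{s,s'}$.

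Next, sum over edges at a fixed vertex. For $v=(0,w)\in V_0^s$, the identity gives
\[
m(v)=\sum_{s'\neq s} m_{s,s'}(v),
\]
because $v$ belongs to $V_{s,s'}$ exactly when the first index is $s$, and to no $V_{t,s'}$ with $t\neq s$. Symmetrically, for $v\in V_1^{s'}$ we get $m(v)=\sum_{s\neq s'} m_{s,s'}(v)$. In both cases this says
\[
m(v)=\sum_{s\neq s'}\mathbbm{1}_{V_{s,s'}}(v)\, m_{s,s'}(v).
\]

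Finally, interpret $\Vert \phi_{s,s'}\Vert^2_{\ell^2(V_{s,s'})}=\sum_{v\in V_{s,s'}} m_{s,s'}(v)\,\phi(v)^2$ and exchange the order of summation:
\[
\sum_{s\neq s'}\Vert\phi_{s,s'}\Vert^2_{\ell^2(V_{s,s'})}=\sum_{v\in V}\phi(v)^2\sum_{s\neq s'}\mathbbm{1}_{V_{s,s'}}(v)\,m_{s,s'}(v)=\sum_{v\in V}m(v)\,\phi(v)^2=\Vert\phi\Vert^2 .
\]
The same computation works verbatim for $\mathcal{H}$-valued $\phi$.

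The only real point is the bookkeeping in the first step: checking that no edge is counted twice across different pairs $(s,s')$, and that no edge is missed because of the same-first-letter exclusion. The Observation settles exactly this. Once the identity for $m(v)$ is in hand, the rest is an exchange of finite sums.
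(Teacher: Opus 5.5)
Your proof is correct and follows the paper's own argument: both rest on the degree identity $m(v)=\sum_{s'\neq s}m_{s,s'}(v)$ for $v\in V_0^s$ (and its mirror for $v\in V_1^{s'}$), followed by an exchange of finite sums. You additionally justify that identity explicitly, via the Observation that no edge joins two words with the same first letter, whereas the paper uses it without comment.
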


\begin{proof}
We observe that 
$$\Vert \phi \Vert^2 = \sum_{v \in V_0} m(v) (\phi (v))^2 + \sum_{v \in V_1} m(v) (\phi (v))^2.$$
For the first summand it holds that
\begin{align*}
& \sum_{v \in V_0} m(v) (\phi (v))^2 = \sum_{s \in S} \sum_{v \in V_0^s} m(v) (\phi (v))^2  = \\
&  \sum_{s \in S} \sum_{v \in V_0^s}  \sum_{s' \in S, s' \neq s} m_{s,s'} (v) (\phi_{s,s'} (v))^2 = \\
&  \sum_{s,s ' \in S,  s \neq s'} \sum_{v \in V_0^s} m_{s,s'} (v) (\phi_{s,s'} (v))^2.
\end{align*}
Similarly, 
\begin{align*}
& \sum_{v \in V_1} m(v) (\phi (v))^2 = \sum_{s,s ' \in S,  s \neq s'} \sum_{v \in V_1^{s '}} m_{s,s'} (v) (\phi_{s,s'} (v))^2.
\end{align*}
Thus
\begin{align*}
& \Vert \phi \Vert^2 =  \sum_{s,s ' \in S,  s \neq s'} \sum_{v \in V_0^s} m_{s,s'} (v) (\phi_{s,s'} (v))^2 + \sum_{s,s ' \in S,  s \neq s'} \sum_{v \in V_1^{s'}} m_{s,s'} (v) (\phi_{s,s'} (v))^2 = \\
&  \sum_{s,s ' \in S,  s \neq s'} \sum_{v \in V_{s,s'}} m_{s,s'} (v) (\phi_{s,s'} (v))^2  = \sum_{s,s ' \in S,  s \neq s'} \Vert \phi_{s,s'} \Vert_{\ell^2 (V_{s,s'})}^{2}
\end{align*}
as needed.
\end{proof}

\begin{lemma}
\label{lemma: rw sum}
We denote $M_{s,s'}$ to be the random walk on $(V_{s,s'},  E_{s,s'})$.  For every $\phi \in \ell^2 (V)$ it holds that
$$\Vert M \phi \Vert^2 \leq \sum_{s,s' \in S, s \neq s'} \Vert M_{s,s'} \phi_{s,s'} \Vert^2_{\ell^2 (V_{s,s'})},$$
where $\phi_{s,s'}$ is the restriction of $\phi$ to $V_{s,s'}$.  
\end{lemma}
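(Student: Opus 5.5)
Our plan is to compute $M\phi(v)$ pointwise, write it as a convex combination of the local walks $M_{s,s'}\phi_{s,s'}(v)$, and then apply Jensen's inequality (convexity of $x\mapsto x^2$). The graph is bipartite and there are no edges between $V_0^s$ and $V_1^s$ (the Observation). Hence every edge incident to $v\in V_0^s$ lies in exactly one of the subgraphs $E_{s,s'}$, $s'\neq s$. It follows that $m(v)=\sum_{s'\neq s} m_{s,s'}(v)$, as already used in the proof of \Cref{lemma: sum of norms}. The same holds for $v\in V_1^{s'}$, with the decomposition running over $s\neq s'$.

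Fix $v\in V_0^s$. Splitting the neighbours of $v$ according to the first letter $s'$ of their word gives
\begin{align*}
M\phi(v)=\frac{1}{m(v)}\sum_{s'\neq s}\sum_{u\in V_1^{s'}} m(\{u,v\})\phi(u)=\sum_{s'\neq s}\frac{m_{s,s'}(v)}{m(v)}\, M_{s,s'}\phi_{s,s'}(v).
\end{align*}
This uses that the edges of $E_{s,s'}$ between $v$ and $u\in V_1^{s'}$ are exactly the edges of $E$ between them. A small technicality arises if $m_{s,s'}(v)=0$; such terms contribute zero, and we simply omit them. Condition (2) of \Cref{thm: main criterion} rules this out anyway.

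The weights $m_{s,s'}(v)/m(v)$ are nonnegative and sum to $1$. By Jensen (or Cauchy--Schwarz),
\begin{align*}
(M\phi(v))^2\le \sum_{s'\neq s}\frac{m_{s,s'}(v)}{m(v)}\,(M_{s,s'}\phi_{s,s'}(v))^2 .
\end{align*}
We multiply by $m(v)$ and sum over $v\in V_0$. The right-hand side becomes $\sum_{s\neq s'}\sum_{v\in V_0^s} m_{s,s'}(v)(M_{s,s'}\phi_{s,s'}(v))^2$. The symmetric computation for $v\in V_1$ yields the remaining $V_1^{s'}$ parts. Adding the two, and using $V_{s,s'}=V_0^s\cup V_1^{s'}$, gives exactly $\sum_{s\neq s'}\|M_{s,s'}\phi_{s,s'}\|^2_{\ell^2(V_{s,s'})}$. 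The same argument works verbatim for $\mathcal{H}$-valued $\phi$, via the convexity of $\|\cdot\|^2$.

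The only real point to check is the identification of $M\phi(v)$ as the $m_{s,s'}$-weighted average of the local walks. This rests on the bookkeeping that each edge at $v$ belongs to exactly one $E_{s,s'}$, with the first coordinate determined by $v$'s side. Once that is verified, the rest is a direct convexity computation.
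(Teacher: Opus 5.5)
Your proposal is correct and is essentially the paper's own proof. Like the paper, you write $M\phi(v)$ for $v\in V_0^s$ (resp.\ $V_1^{s'}$) as the $\frac{m_{s,s'}(v)}{m(v)}$-weighted average of the values $M_{s,s'}\phi_{s,s'}(v)$, apply convexity of $x^2$, multiply by $m(v)$, and sum over $V_0\cup V_1$. Your explicit treatment of the case $m_{s,s'}(v)=0$ also covers a division that the paper performs without comment.
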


\begin{proof}

Fix some $s \in S$ and let $v \in V_0^s$.  Then
\begin{align*}
& M \phi (v) = \frac{1}{m(v)} \sum_{u \in V_1,  u \sim v} m(\lbrace v,u \rbrace) \phi (u) =  \\
& \frac{1}{m(v)} \sum_{s' \in S,  s' \neq s} \sum_{u \in V_1^{s'},  u \sim v} m_{s,s'} (\lbrace v,u \rbrace) \phi_{s,s'} (u) = \\
& \sum_{s' \in S,  s' \neq s}  \frac{m_{s,s'} (v)}{m(v)} \left(\frac{1}{m_{s,s'} (v)}  \sum_{u \in V_1^{s'},  u \sim v} m_{s,s'} (\lbrace v,u \rbrace) \phi_{s,s'} (u) \right).
\end{align*}
By convexity of $x^2$ it follows that
\begin{align*}
& (M \phi (v) )^2 \leq \sum_{s' \in S,  s' \neq s}  \frac{m_{s,s'} (v)}{m(v)}\left(\frac{1}{m_{s,s'} (v)}  \sum_{u \in V_1^{s'},  u \sim v} m_{s,s'} (\lbrace v,u \rbrace) \phi_{s,s'} (u) \right)^2 = \\
& \sum_{s' \in S,  s' \neq s}  \frac{m_{s,s'} (v)}{m(v)} \left(M_{s,s'} \phi_{s,s'} (v) \right)^2.
\end{align*}
This yields that 
$$m(v) (M \phi (v) )^2 \leq \sum_{s' \in S,  s' \neq s} m_{s,s'} (v) \left(M_{s,s'} \phi_{s,s'} (v) \right)^2.$$
Similarly, for every $s' \in S$ and every $v \in V_1^{s'}$,  it holds that 
\begin{align*}
m(v) (M \phi (v) )^2 \leq  \sum_{s \in S,  s \neq s'} m_{s,s'} (v) \left(M_{s,s'} \phi_{s,s'} (v) \right)^2.
\end{align*}

Thus for every $\phi \in \ell^2 (V)$,
\begin{align*}
& \Vert M \phi \Vert^2 = \sum_{v \in V} m (v) (M \phi (v) )^2 = \\
& \sum_{s \in S} \sum_{v \in V_0^s} m (v) (M \phi (v) )^2 + \sum_{s' \in S} \sum_{v \in V_1^{s'}} m (v) (M \phi (v) )^2 \leq \\
& \sum_{s \in S} \sum_{v \in V_0^s}  \sum_{s' \in S,  s' \neq s} m_{s,s'} (v) \left(M_{s,s'} \phi_{s,s'} (v) \right)^2 + \sum_{s' \in S} \sum_{v \in V_1^{s'}} \sum_{s \in S,  s \neq s'} m_{s,s'} (v) \left(M_{s,s'} \phi_{s,s'} (v) \right)^2 = \\
& \sum_{s,s' \in S, s \neq s'} \sum_{v \in V_{s,s'}} m_{s,s'} (v) \left(M_{s,s'} \phi_{s,s'} (v) \right)^2 = \sum_{s,s' \in S, s \neq s'} \Vert M_{s,s'} \phi_{s,s'} \Vert^2_{\ell^2 (V_{s,s'})}.
\end{align*}
\end{proof}

\begin{lemma}
\label{lemma: m set ineq}
Assume that $(V,E)$ fulfills the conditions of \Cref{thm: main criterion}.  Then for every $s,s' \in S,  s \neq s'$ it holds that 
$$\frac{m (V_0^s)}{m_{s,s'} (V_0^s)} \leq \frac{2n-1}{1- (\ell^{-2} \varepsilon_2)},$$
$$\frac{m (V_1^{s'})}{m_{s,s'} (V_1^{s'})} \leq \frac{2n-1}{1- (\ell^{-2} \varepsilon_2)}.$$
\end{lemma}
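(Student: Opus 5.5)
This follows vertex-by-vertex from condition (2) of \Cref{thm: main criterion}, summed over the relevant vertex set. Fix $s \neq s'$ and treat the first inequality; the second is symmetric. For every $v \in V_0^s \subseteq V_{s,s'}$, condition (2) gives
$$\frac{m_{s,s'}(v)}{m(v)} \geq \frac{1}{2n-1} - \frac{\ell^{-2}\varepsilon_2}{2n-1} = \frac{1-\ell^{-2}\varepsilon_2}{2n-1}.$$
Since $\varepsilon_2 < 1$ and $\ell \geq 4$, this lower bound is positive. Multiplying through by $m(v)$ yields $m_{s,s'}(v) \geq \frac{1-\ell^{-2}\varepsilon_2}{2n-1}\, m(v)$ for each $v \in V_0^s$.

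Summing over $v \in V_0^s$ gives
$$m_{s,s'}(V_0^s) \geq \frac{1-\ell^{-2}\varepsilon_2}{2n-1}\, m(V_0^s).$$
Here $m_{s,s'}(V_0^s) = \sum_{v\in V_0^s} m_{s,s'}(v)$ is the degree sum taken in the subgraph $(V_{s,s'},E_{s,s'})$. Dividing then gives the first claimed inequality. This division needs $m_{s,s'}(V_0^s) > 0$, which holds because condition (4) makes $(V_{s,s'},E_{s,s'})$ connected, and a connected graph on at least two vertices has positive degree everywhere. Alternatively, condition (2) already forces $m_{s,s'}(v)>0$ whenever $m(v)>0$, and the standing assumption of no isolated vertices gives $m(v)>0$.

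The second inequality is identical, summing instead over $v \in V_1^{s'} \subseteq V_{s,s'}$, where condition (2) applies in the same way.

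There is no real obstacle. The only points to check are that condition (2) is used for vertices on both sides of $V_{s,s'}$ (which is exactly how it is stated) and that the denominators are nonzero, which the argument above covers.
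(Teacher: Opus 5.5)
Your proof is correct and is essentially the paper's argument. The paper averages condition (2) over $V_0^s$ with weights $m(v)/m(V_0^s)$ to get $\bigl|\frac{m_{s,s'}(V_0^s)}{m(V_0^s)}-\frac{1}{2n-1}\bigr|\le\frac{\ell^{-2}\varepsilon_2}{2n-1}$ and then rearranges; you sum only the lower half of condition (2) directly, which is the same computation done a bit more economically. Your explicit check that $m_{s,s'}(V_0^s)>0$ covers a point the paper leaves implicit when it multiplies through by $m(V_0^s)/m_{s,s'}(V_0^s)$.
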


\begin{proof}
The proofs of the two inequalities are similar and we will only prove the first one.  We will first prove that 
$$ \left\vert \frac{m_{s,s'} (V_0^s)}{m (V_0^s)} - \frac{1}{2n-1} \right\vert  \leq  \frac{(\ell^{-2} \varepsilon_2)}{2n-1}.$$
Indeed,
\begin{align*}
& \left\vert \frac{m_{s,s'} (V_0^s)}{m (V_0^s)} - \frac{1}{2n-1} \right\vert = 
\left\vert \sum_{v \in V_0^s} \frac{m_{s,s'} (v)}{m (v)} \frac{m (v)}{m (V_0^s)} - \frac{1}{2n-1} \right\vert = \\
&  \left\vert \sum_{v \in V_0^s} \left( \frac{m_{s,s'} (v)}{m (v)} - \frac{1}{2n-1}  \right)\frac{m (v)}{m (V_0^s)} \right\vert \leq 
 \sum_{v \in V_0^s}  \left\vert  \frac{m_{s,s'} (v)}{m (v)} - \frac{1}{2n-1}  \right\vert \frac{m (v)}{m (V_0^s)}  \leq \\
& \frac{(\ell^{-2} \varepsilon_2)}{2n-1} \sum_{v \in V_0^s}  \frac{m (v)}{m (V_0^s)}  = \frac{(\ell^{-2} \varepsilon_2)}{2n-1}.
\end{align*}
Multiplying the inequality by $(2n-1) \frac{m (V_0^s) }{m_{s,s'} (V_0^s)}$ yields
$$ \left\vert 2n-1 - \frac{m (V_0^s) }{m_{s,s'} (V_0^s)} \right\vert  \leq  (\ell^{-2} \varepsilon_2) \frac{m (V_0^s) }{m_{s,s'} (V_0^s)}$$
and the inequality stated in the lemma readily follows.
\end{proof}



\begin{lemma}
\label{lemma: bound on M (I-P)}
Assume that $(V,E)$ fulfills the conditions of \Cref{thm: main criterion}.  Let $\mathcal{W} = span \lbrace \mathbbm{1}_{V_i^s} : s \in S,  i=0,1 \rbrace  \subseteq \ell^2 (V )$ and denote $P_{\mathcal{W}}$ to be the orthogonal projection on this subspace.  Then 
$$\Vert M (I - P_{\mathcal{W}}) \Vert \leq \sqrt{(\ell^{-2} \varepsilon_4)^2 + \frac{(\ell^{-2} \varepsilon_2)^2}{1- (\ell^{-2} \varepsilon_2)} } .$$
\end{lemma}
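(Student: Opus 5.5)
Take $\phi \in \ell^2(V)$ with $\phi \perp \mathcal{W}$, i.e.\ $\langle \phi, \mathbbm{1}_{V_i^s}\rangle = 0$ for all $i,s$. It suffices to show
$$\Vert M\phi\Vert^2 \le \Big((\ell^{-2}\varepsilon_4)^2 + \tfrac{(\ell^{-2}\varepsilon_2)^2}{1-(\ell^{-2}\varepsilon_2)}\Big)\Vert \phi\Vert^2 .$$
Indeed, $M(I-P_{\mathcal{W}})$ vanishes on $\mathcal{W}$ and equals $M$ on $\mathcal{W}^\perp$, so its norm is the supremum of $\Vert M\phi\Vert/\Vert\phi\Vert$ over $\phi\perp\mathcal{W}$.

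By \Cref{lemma: rw sum},
$$\Vert M\phi\Vert^2 \le \sum_{s\neq s'} \Vert M_{s,s'}\phi_{s,s'}\Vert^2_{\ell^2(V_{s,s'})},$$
and by \Cref{lemma: sum of norms}, $\Vert\phi\Vert^2 = \sum_{s\ne s'}\Vert\phi_{s,s'}\Vert^2_{\ell^2(V_{s,s'})}$. So it is enough to bound each $\Vert M_{s,s'}\phi_{s,s'}\Vert^2$ by the constant times $\Vert\phi_{s,s'}\Vert^2_{\ell^2(V_{s,s'})}$.

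The graph $(V_{s,s'},E_{s,s'})$ is connected (condition (4)) and bipartite with sides $V_0^s, V_1^{s'}$. We decompose $\phi_{s,s'} = \alpha\mathbbm{1}_{V_0^s} + \beta\mathbbm{1}_{V_1^{s'}} + \eta$, where $\eta$ is orthogonal, in $\ell^2(V_{s,s'})$, to both side indicators. The three pieces are mutually orthogonal. $M_{s,s'}$ swaps the two indicators, so it maps $\mathrm{span}\{\mathbbm{1}_{V_0^s},\mathbbm{1}_{V_1^{s'}}\}$ to itself; being self-adjoint, it then preserves the orthogonal complement as well. Hence
$$\Vert M_{s,s'}\phi_{s,s'}\Vert^2 = \alpha^2 m_{s,s'}(V_1^{s'})+\beta^2 m_{s,s'}(V_0^s) + \Vert M_{s,s'}\eta\Vert^2 .$$
The bipartite fact listed at the start of \Cref{section: Random walks on multigraphs}, together with condition (4), gives $\Vert M_{s,s'}\eta\Vert\le \ell^{-2}\varepsilon_4\Vert\eta\Vert\le \ell^{-2}\varepsilon_4\Vert\phi_{s,s'}\Vert$. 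Also, the total $m_{s,s'}$-weights of the two sides are equal (each edge has one endpoint on each side), so the first two terms equal $\alpha^2m_{s,s'}(V_0^s)+\beta^2 m_{s,s'}(V_1^{s'})$, which is the squared norm of the projection onto the indicators.

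The main step is to show that $\alpha$ and $\beta$ are small, using $\phi\perp\mathbbm{1}_{V_0^s}$ in $\ell^2(V)$, i.e.\ $\sum_{v\in V_0^s}m(v)\phi(v)=0$. Then
$$\alpha\, m_{s,s'}(V_0^s) = \sum_{v\in V_0^s} m_{s,s'}(v)\phi(v) = \sum_{v\in V_0^s}\Big(m_{s,s'}(v)-\tfrac{m(v)}{2n-1}\Big)\phi(v).$$
Condition (2) bounds $|m_{s,s'}(v)-\tfrac{m(v)}{2n-1}|$ by $\tfrac{\ell^{-2}\varepsilon_2}{2n-1}m(v)$, and also gives $m(v)\le \tfrac{2n-1}{1-\ell^{-2}\varepsilon_2}m_{s,s'}(v)$. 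Applying Cauchy--Schwarz with weights $m_{s,s'}(v)$ yields
$$\alpha^2 m_{s,s'}(V_0^s)\le \frac{(\ell^{-2}\varepsilon_2)^2}{(2n-1)^2}\cdot\frac{(2n-1)^2}{(1-\ell^{-2}\varepsilon_2)^2}\sum_{v\in V_0^s}m_{s,s'}(v)\phi(v)^2 ,$$
and the same bound holds for $\beta$. This appears to give the exponent $(1-\ell^{-2}\varepsilon_2)^{-2}$ rather than $^{-1}$. I expect the sharper constant to come from being more careful: write $|m_{s,s'}-m/(2n-1)|^2\le \tfrac{\ell^{-2}\varepsilon_2}{2n-1}m\cdot |m_{s,s'}-m/(2n-1)|$ and apply Cauchy--Schwarz with weights $m(v)$, using \Cref{lemma: m set ineq} to convert $m(V_0^s)$ to $m_{s,s'}(V_0^s)$. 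Any such bound of order $(\ell^{-2}\varepsilon_2)^2$ suffices.

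Summing the two estimates over each pair $s\neq s'$ gives
$$\Vert M_{s,s'}\phi_{s,s'}\Vert^2\le\Big((\ell^{-2}\varepsilon_4)^2+\tfrac{(\ell^{-2}\varepsilon_2)^2}{1-\ell^{-2}\varepsilon_2}\Big)\Vert\phi_{s,s'}\Vert^2 .$$
Summing over all pairs via the two lemmas above then completes the proof.
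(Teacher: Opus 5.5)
Most of your argument is sound: the reduction to $\phi\perp\mathcal W$, the use of \Cref{lemma: rw sum} and \Cref{lemma: sum of norms}, the orthogonal splitting $\phi_{s,s'}=\alpha\mathbbm{1}_{V_0^s}+\beta\mathbbm{1}_{V_1^{s'}}+\eta$, the bound on $M_{s,s'}\eta$, and the identity $\alpha\, m_{s,s'}(V_0^s)=\sum_{v\in V_0^s}\bigl(m_{s,s'}(v)-\frac{m(v)}{2n-1}\bigr)\phi(v)$.

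\textbf{The gap is the constant.} Write $\delta=\ell^{-2}\varepsilon_2$. Your estimate gives $\alpha^2 m_{s,s'}(V_0^s)\le\frac{\delta^2}{(1-\delta)^2}\sum_{v\in V_0^s}m_{s,s'}(v)\phi(v)^2$. Hence you only get $\Vert M(I-P_{\mathcal W})\Vert^2\le(\ell^{-2}\varepsilon_4)^2+\frac{\delta^2}{(1-\delta)^2}$. Your final per-pair inequality with $\frac{\delta^2}{1-\delta}$ is asserted, not derived.

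The remedy you sketch does not close this. Cauchy--Schwarz with weights $m(v)$ plus \Cref{lemma: m set ineq} gives $\alpha^2 m_{s,s'}(V_0^s)\le\frac{\delta^2}{(1-\delta)(2n-1)}\sum_{v\in V_0^s}m(v)\phi(v)^2$. But once you compare this with $\Vert\phi_{s,s'}\Vert^2_{\ell^2(V_{s,s'})}$ pair by pair, you must use $\frac{m(v)}{2n-1}\le\frac{m_{s,s'}(v)}{1-\delta}$, and the second factor $(1-\delta)^{-1}$ comes back. It is true that any bound of order $\delta^2$ suffices for the application, after adjusting the explicit inequality in \Cref{thm: main criterion}. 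It does not prove the lemma as stated.

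\textbf{How the paper avoids the loss.} It does not normalize pair by pair. It keeps the bound above in terms of $m(v)$ (and likewise on $V_1^{s'}$) and sums over all ordered pairs $s\neq s'$. A vertex of $V_0^t$ lies in $V_{s,s'}$ exactly when $s=t$, i.e.\ for $2n-1$ pairs, and symmetrically for $V_1^t$. So $\sum_{s\neq s'}\sum_{v\in V_{s,s'}}m(v)\phi(v)^2=(2n-1)\Vert\phi\Vert^2$. The factor $\frac{1}{2n-1}$ cancels exactly and the total is $\frac{\delta^2}{1-\delta}\Vert\phi\Vert^2$. The $\eta$-parts are summed by \Cref{lemma: sum of norms}, as you intended.

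\textbf{Alternative per-pair fix.} Your scheme can be rescued, even with constant $\delta^2$, by subtracting a rescaled multiple. Since $\sum_{v\in V_0^s}m(v)\phi(v)=0$, also $\alpha\,m_{s,s'}(V_0^s)=\sum_{v\in V_0^s}\bigl(m_{s,s'}(v)-(1-\delta^2)\frac{m(v)}{2n-1}\bigr)\phi(v)$. Condition (2) gives $(1-\delta^2)\frac{m(v)}{(2n-1)m_{s,s'}(v)}\in[1-\delta,1+\delta]$. So each coefficient is at most $\delta\, m_{s,s'}(v)$ in absolute value. Cauchy--Schwarz with weights $m_{s,s'}(v)$ then yields $\alpha^2 m_{s,s'}(V_0^s)\le\delta^2\sum_{v\in V_0^s}m_{s,s'}(v)\phi(v)^2$, and the same holds for $\beta$.
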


\begin{proof}
Below,  we will show that for every $\phi \perp \mathcal{W}$ and every $s,s ' \in S,  s \neq s'$, it holds that
\begin{equation}
\label{ineq1}
\Vert M_{s,s'} \phi_{s,s'} \Vert_{\ell^2 (V_{s,s'})}^2 \leq  (\ell^{-2} \varepsilon_4)^2 \Vert  \phi_{s,s'} \Vert_{\ell^2 (V_{s,s'})}^2 + \frac{(\ell^{-2} \varepsilon_2)^2}{(1- (\ell^{-2} \varepsilon_2)) (2n-1)}  \sum_{v \in V_{s,s'}} m (v) ( \phi (v) )^2 .
\end{equation}

Before proving \eqref{ineq1} we will show that it indeed yields the desired norm bound.  Assume that  \eqref{ineq1} holds,  then
\begin{align*}
& \Vert M \phi \Vert^2 \leq^{\Cref{lemma: rw sum}} \sum_{s,s' \in S,  s \neq s'}  \Vert M_{s,s'} \phi_{s,s'} \Vert_{\ell^2 (V_{s,s'})}^2 \leq^{\eqref{ineq1}} \\ & (\ell^{-2} \varepsilon_4)^2 \sum_{s,s' \in S,  s \neq s'}   \Vert  \phi_{s,s'} \Vert_{\ell^2 (V_{s,s'})}^2 + \frac{(\ell^{-2} \varepsilon_2)^2}{(1- (\ell^{-2} \varepsilon_2)) (2n-1)} \sum_{s,s' \in S,  s \neq s'}  \sum_{v \in V_{s,s'}} m (v) ( \phi (v) )^2 .
\end{align*}

For the first summand,
\begin{align*}
(\ell^{-2} \varepsilon_4)^2 \sum_{s,s' \in S,  s \neq s'}   \Vert  \phi_{s,s'} \Vert_{\ell^2 (V_{s,s'})}^2 =^{\Cref{lemma: sum of norms}} (\ell^{-2} \varepsilon_4)^2 \Vert \phi \Vert^2.
\end{align*}

For the second summand,
\begin{align*}
& \frac{(\ell^{-2} \varepsilon_2)^2}{(1- (\ell^{-2} \varepsilon_2)) (2n-1)} \sum_{s,s' \in S,  s \neq s'}  \sum_{v \in V_{s,s'}} m (v) ( \phi (v) )^2  = \\
& \frac{(\ell^{-2} \varepsilon_2)^2}{1- (\ell^{-2} \varepsilon_2)} \sum_{v \in V} m (v) ( \phi (v) )^2 = \frac{(\ell^{-2} \varepsilon_2)^2}{1- (\ell^{-2} \varepsilon_2)}  \Vert \phi \Vert^2.
\end{align*}

Combining all of the above,  yields that 
$$ \Vert M \phi \Vert^2 \leq \left((\ell^{-2} \varepsilon_4)^2 + \frac{(\ell^{-2} \varepsilon_2)^2}{1- (\ell^{-2} \varepsilon_2)} \right) \Vert \phi \Vert^2.$$

This inequality holds for every $\phi \perp \mathcal{W}$,  thus we can replace $\phi$ with $(I - P_{\mathcal{W}}) \phi$ and get the desired operator norm bound.

Thus we are left to prove \eqref{ineq1}.  Fix $s,s' \in S,  s \neq s'$ and $\phi \in \mathcal{W}^\perp$.  The multigraph $(V_{s,s'},  E_{s,s'})$ is bipartite and therefore it has the eigenvectors $\mathbbm{1}_{V_0^s} + \mathbbm{1}_{V_1^{s'}}$ and $\mathbbm{1}_{V_0^s} - \mathbbm{1}_{V_1^{s'}}$.  Let $$\mathcal{W}_{s,s'} = span \lbrace \mathbbm{1}_{V_0^s} + \mathbbm{1}_{V_1^{s'}},   \mathbbm{1}_{V_0^s} - \mathbbm{1}_{V_1^{s'}} \rbrace = span \lbrace \mathbbm{1}_{V_0^s},  \mathbbm{1}_{V_1^{s'}} \rbrace .$$

Denote $P_{\mathcal{W}_{s,s'}}^{s,s'}$ to be the projection on $\mathcal{W}_{s,s'}$ with respect to the inner product of $\ell^2 (V_{s,s'})$.  Since the eigenspaces of $M_{s,s'}$ are orthogonal,  $M_{s,s'}$ commutes with $P_{\mathcal{W}_{s,s'}}^{s,s'}$ and 
\begin{align*}
\Vert M_{s,s'} \phi_{s,s'} \Vert_{\ell^2 (V_{s,s'})}^2 = \Vert M_{s,s'} P_{\mathcal{W}_{s,s'}}^{s,s'} \phi_{s,s'} \Vert_{\ell^2 (V_{s,s'})}^2 +  \Vert M_{s,s'} (I-P_{\mathcal{W}_{s,s'}}^{s,s'}) \phi_{s,s'} \Vert_{\ell^2 (V_{s,s'})}^2. 
\end{align*}

We will finish the proof by showing that
\begin{equation}
\label{ineq2}
 \Vert M_{s,s'} (I-P_{\mathcal{W}_{s,s'}}^{s,s'}) \phi_{s,s'} \Vert_{\ell^2 (V_{s,s'})}^2 \leq (\ell^{-2} \varepsilon_4)^2 \Vert  \phi_{s,s'} \Vert_{\ell^2 (V_{s,s'})}^2
\end{equation}
and 
\begin{equation}
\label{ineq3}
  \Vert M_{s,s'} P_{\mathcal{W}_{s,s'}}^{s,s'} \phi_{s,s'} \Vert_{\ell^2 (V_{s,s'})}^2  \leq  \frac{(\ell^{-2} \varepsilon_2)^2}{(1- (\ell^{-2} \varepsilon_2)) (2n-1)}  \sum_{v \in V_{s,s'}} m (v) ( \phi (v) )^2 .
\end{equation}

For \eqref{ineq2},  we note that since $(V_{s,s'},  E_{s,s'})$ is bipartite and connected by our assumptions,   it holds that
\begin{align*}
 \Vert M_{s,s'} (I-P_{\mathcal{W}_{s,s'}}^{s,s'}) \phi_{s,s'} \Vert_{\ell^2 (V_{s,s'})}^2 \leq \lambda ((V_{s,s'},  E_{s,s'}))^2 \Vert (I-P_{\mathcal{W}_{s,s'}}^{s,s'}) \phi_{s,s'} \Vert_{\ell^2 (V_{s,s'})}^2 \leq (\ell^{-2} \varepsilon_4)^2 \Vert  \phi_{s,s'} \Vert_{\ell^2 (V_{s,s'})}^2 .
\end{align*}
as needed.

For \eqref{ineq3},  we note that by assumption it holds that $\phi$ is perpendicular to $\mathbbm{1}_{V_0^s}$ and $\mathbbm{1}_{V_1^{s'}}$ with respect to the inner product of $\ell^2 (V)$.   Below,  we will show that this implies ``almost orthogonality'' (in the sense of  \eqref{ineq3}) with respect to the inner product of $\ell^2 (V_{s,s'})$.  We note that 
\begin{align*}
& \Vert M_{s,s'} P_{\mathcal{W}_{s,s'}}^{s,s'} \phi_{s,s'} \Vert_{\ell^2 (V_{s,s'})}^2 \leq^{\Vert M_{s,s'} \Vert =1}  \Vert  P_{\mathcal{W}_{s,s'}}^{s,s'} \phi_{s,s'} \Vert_{\ell^2 (V_{s,s'})}^2 =^{ \mathbbm{1}_{V_0^s} \perp \mathbbm{1}_{V_1^{s'}}} \\
& \Vert  P_{span \lbrace \mathbbm{1}_{V_0^s} \rbrace}^{s,s'} \phi_{s,s'} \Vert_{\ell^2 (V_{s,s'})}^2 + \Vert  P_{span \lbrace \mathbbm{1}_{V_1^{s'}} \rbrace}^{s,s'} \phi_{s,s'} \Vert_{\ell^2 (V_{s,s'})}^2.
\end{align*}

By definition
$$\Vert  P_{span \lbrace \mathbbm{1}_{V_0^s} \rbrace}^{s,s'} \phi_{s,s'} \Vert_{\ell^2 (V_{s,s'})}^2  = \frac{\left(\langle \phi_{s,s'},   \mathbbm{1}_{V_0^s}  \rangle_{\ell^2 (V_{s,s'})} \right)^2}{\langle \mathbbm{1}_{V_0^s},   \mathbbm{1}_{V_0^s}  \rangle_{\ell^2 (V_{s,s'})}}.$$
We will first bound the numerator
\begin{align*}
& \left(\langle \phi_{s,s'},   \mathbbm{1}_{V_0^s}  \rangle_{\ell^2 (V_{s,s'})} \right)^2 = 
\left( \sum_{v \in V_0^s} m_{s,s'} (v) \phi (v)  \right)^2 = 
 \left( \sum_{v \in V_0^s} \frac{m_{s,s'} (v)}{m (v)} m(v) \phi (v)  \right)^2 = \\
& \left( \sum_{v \in V_0^s} \left( \frac{m_{s,s'} (v)}{m (v)} - \frac{1}{2n-1} \right) m(v) \phi (v) + \frac{1}{2n-1} \langle \phi,   \mathbbm{1}_{V_0^s}  \rangle_{\ell^2 (V)}   \right)^2   =^{\langle \phi,   \mathbbm{1}_{V_0^s}  \rangle_{\ell^2 (V)}   =0} \\
& \left( \sum_{v \in V_0^s} \left( \frac{m_{s,s'} (v)}{m (v)} - \frac{1}{2n-1} \right) m(v) \phi (v)   \right)^2  = \\
& m (V_{0}^s)^2 \left( \sum_{v \in V_0^s} \frac{m(v)}{m (V_0^s)} \left( \frac{m_{s,s'} (v)}{m (v)} - \frac{1}{2n-1} \right)  \phi (v)   \right)^2 \leq^{\text{convexity of } x^2} \\
& m (V_{0}^s)^2 \sum_{v \in V_0^s} \frac{m(v)}{m (V_0^s)} \left( \left( \frac{m_{s,s'} (v)}{m (v)} - \frac{1}{2n-1} \right)  \phi (v)   \right)^2 \leq \\
& m (V_{0}^s) \sum_{v \in V_0^s} m (v) \frac{(\ell^{-2} \varepsilon_2)^2}{(2n-1)^2}  ( \phi (v) )^2  = \\
&  \frac{(\ell^{-2} \varepsilon_2)^2}{(2n-1)^2}  m (V_{0}^s) \sum_{v \in V_0^s} m (v) ( \phi (v) )^2.
\end{align*}

Next, we note that 
$$\langle \mathbbm{1}_{V_0^s},   \mathbbm{1}_{V_0^s}  \rangle_{\ell^2 (V_{s,s'})} = m_{s,s'} (V_0^s).$$
Combining these yields that 
\begin{align*}
& \frac{\left(\langle \phi_{s,s'},   \mathbbm{1}_{V_0^s}  \rangle_{\ell^2 (V_{s,s'})} \right)^2}{\langle \mathbbm{1}_{V_0^s},   \mathbbm{1}_{V_0^s}  \rangle_{\ell^2 (V_{s,s'})}} \leq 
  \frac{(\ell^{-2} \varepsilon_2)^2}{(2n-1)^2}  \frac{m (V_{0}^s)}{m_{s,s'} (V_0^s)}  \sum_{v \in V_0^s} m (v) ( \phi (v) )^2 \leq^{\Cref{lemma: m set ineq}} \\
  &  \frac{(\ell^{-2} \varepsilon_2)^2}{(1- (\ell^{-2} \varepsilon_2)) (2n-1)}  \sum_{v \in V_0^s} m (v) ( \phi (v) )^2.
\end{align*}
Thus we deduced that 
$$\Vert  P_{span \lbrace \mathbbm{1}_{V_0^s} \rbrace}^{s,s'} \phi_{s,s'} \Vert_{\ell^2 (V_{s,s'})}^2 \leq  \frac{(\ell^{-2} \varepsilon_2)^2}{(1- (\ell^{-2} \varepsilon_2)) (2n-1)}  \sum_{v \in V_0^s} m (v) ( \phi (v) )^2.$$
By a similar computation 
$$\Vert  P_{span \lbrace \mathbbm{1}_{V_1^{s'}} \rbrace}^{s,s'} \phi_{s,s'} \Vert_{\ell^2 (V_{s,s'})}^2 \leq  \frac{(\ell^{-2} \varepsilon_2)^2}{(1- (\ell^{-2} \varepsilon_2)) (2n-1)}  \sum_{v \in V_1^{s'}} m (v) ( \phi (v) )^2$$
and \eqref{ineq3} follows.
\end{proof}

After all this set-up,  we are ready to prove the main result in this part using the functions $\phi_c$ and $\phi_c^{\mathcal{W}}$ defined above (see \Cref{def: phi c} and \Cref{def: phi c V}):
\begin{theorem}
\label{thm: aux2}
Assume that $(V,E)$ fulfills the conditions of \Cref{thm: main criterion} and that $c$ is a harmonic $1$-cocycle.  Then 
$$\langle (M (I-P_{\mathcal{W}}) \otimes I_{\mathcal{H}}) \phi_c,  ((I-P_{\mathcal{W}}) \otimes I_{\mathcal{H}}) \phi_c \rangle \leq \frac{n}{4} \sqrt{ \varepsilon_4^2 + \frac{ \varepsilon_2^2}{1- (\ell^{-2} \varepsilon_2)} }  \frac{1}{1-\varepsilon_3}  \Vert \phi_c^{\mathcal{W}} \Vert^2$$
and 
$$\langle (M (I-P_{\mathcal{W}}) \otimes I_{\mathcal{H}}) \phi_c,  (P_{\mathcal{W}} \otimes I_{\mathcal{H}}) \phi_c \rangle \leq \frac{\sqrt{n}}{2 \ell}  \sqrt{\varepsilon_4^2 + \frac{ \varepsilon_2^2}{1- (\ell^{-2} \varepsilon_2)} }  \frac{1}{\sqrt{1-\varepsilon_3}}  (1 +\frac{\sqrt{n}}{2}  \frac{\varepsilon_1}{\sqrt{1-\varepsilon_3}}  )  \Vert \phi_c^{\mathcal{W}}  \Vert^2.$$
\end{theorem}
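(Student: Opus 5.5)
Both bounds follow from Cauchy--Schwarz combined with the operator-norm bound of \Cref{lemma: bound on M (I-P)}. The norm of $\phi_c$ is then converted into a multiple of $\Vert \phi_c^{\mathcal{W}} \Vert$ via \Cref{lemma: phi c V bounds} and \Cref{corollary: bound on P V phi c}.

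Write $\delta = \sqrt{(\ell^{-2}\varepsilon_4)^2 + \frac{(\ell^{-2}\varepsilon_2)^2}{1-(\ell^{-2}\varepsilon_2)}}$. Then $\delta \leq \ell^{-2}\sqrt{\varepsilon_4^2 + \frac{\varepsilon_2^2}{1-(\ell^{-2}\varepsilon_2)}}$, because $\ell^{-4}\leq \ell^{-4}$ in the first term and $(\ell^{-2}\varepsilon_2)^2 = \ell^{-4}\varepsilon_2^2$ in the second. By the fact $\Vert T\otimes I_{\mathcal{H}}\Vert \leq \Vert T\Vert$, \Cref{lemma: bound on M (I-P)} gives $\Vert (M(I-P_{\mathcal{W}}))\otimes I_{\mathcal{H}}\Vert \leq \delta$.

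For the first inequality, apply Cauchy--Schwarz to get a bound of $\delta\,\Vert \phi_c\Vert\,\Vert ((I-P_{\mathcal{W}})\otimes I_{\mathcal{H}})\phi_c\Vert \leq \delta \Vert \phi_c\Vert^2$, using that $I-P_{\mathcal{W}}$ is an orthogonal projection. \Cref{lemma: phi c V bounds} gives $\Vert\phi_c\Vert^2 \leq \frac{n}{1-\varepsilon_3}\frac{\ell^2}{4}\Vert\phi_c^{\mathcal{W}}\Vert^2$. The factor $\ell^2$ cancels the $\ell^{-2}$ in $\delta$, which leaves $\frac{n}{4}\sqrt{\cdots}\frac{1}{1-\varepsilon_3}\Vert\phi_c^{\mathcal{W}}\Vert^2$, as stated.

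For the second inequality, Cauchy--Schwarz gives a bound of $\delta\,\Vert\phi_c\Vert\,\Vert (P_{\mathcal{W}}\otimes I_{\mathcal{H}})\phi_c\Vert$. Here only one factor of $\Vert\phi_c\Vert$ appears, so \Cref{lemma: phi c V bounds} gives $\Vert\phi_c\Vert \leq \frac{\sqrt{n}\,\ell}{2\sqrt{1-\varepsilon_3}}\Vert\phi_c^{\mathcal{W}}\Vert$. \Cref{corollary: bound on P V phi c} bounds the other factor by $(1+\frac{\sqrt n}{2}\frac{\varepsilon_1}{\sqrt{1-\varepsilon_3}})\Vert\phi_c^{\mathcal{W}}\Vert$. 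Multiplying, $\ell\cdot\ell^{-2} = \ell^{-1}$, and this yields exactly the claimed $\frac{\sqrt n}{2\ell}$ bound.

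There is no real obstacle here. The only care needed is bookkeeping the powers of $\ell$: $\delta$ must be written with $\ell^{-2}$ factored out, so that the $\ell^2$ from \Cref{lemma: phi c V bounds} cancels it in the first bound and leaves $\ell^{-1}$ in the second. One must also use that $(I-P_{\mathcal{W}})\otimes I_{\mathcal{H}}$ is a contraction, since $I - P_{\mathcal{W}}$ is an orthogonal projection with respect to the $m$-weighted inner product.
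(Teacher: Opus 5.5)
Your proposal is correct and matches the paper's proof step for step. It uses Cauchy--Schwarz, the operator-norm bound of \Cref{lemma: bound on M (I-P)} passed through $\otimes I_{\mathcal{H}}$, and contractivity of $(I-P_{\mathcal{W}})\otimes I_{\mathcal{H}}$; then \Cref{lemma: phi c V bounds} and \Cref{corollary: bound on P V phi c} convert $\Vert\phi_c\Vert$ and $\Vert (P_{\mathcal{W}}\otimes I_{\mathcal{H}})\phi_c\Vert$ into multiples of $\Vert\phi_c^{\mathcal{W}}\Vert$. (Your $\delta \leq \ell^{-2}\sqrt{\cdots}$ is in fact an equality, which is all the $\ell$-bookkeeping needs.)
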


\begin{proof}
Both inequalities use the fact that 
$$\Vert M (I-P_{\mathcal{W}}) \otimes I_{\mathcal{H}} \Vert \leq \Vert M (I-P_{\mathcal{W}}) \Vert \leq^{\Cref{lemma: bound on M (I-P)}}  \sqrt{(\ell^{-2} \varepsilon_4)^2 + \frac{(\ell^{-2} \varepsilon_2)^2}{1- (\ell^{-2} \varepsilon_2)} }.$$

Proof of first inequality:
\begin{align*}
& \langle (M (I-P_{\mathcal{W}}) \otimes I_{\mathcal{H}}) \phi_c,  ((I-P_{\mathcal{W}}) \otimes I_{\mathcal{H}}) \phi_c \rangle  \leq \\
& \Vert (M (I-P_{\mathcal{W}}) \otimes I_{\mathcal{H}}) \phi_c \Vert \Vert ((I-P_{\mathcal{W}}) \otimes I_{\mathcal{H}}) \phi_c \Vert \leq \\
& \sqrt{(\ell^{-2} \varepsilon_4)^2 + \frac{(\ell^{-2} \varepsilon_2)^2}{1- (\ell^{-2} \varepsilon_2)} } \Vert \phi_c \Vert \Vert ((I-P_{\mathcal{W}}) \otimes I_{\mathcal{H}}) \phi_c \Vert  \leq^{\Vert ( I-P_{\mathcal{W}}) \otimes I_{\mathcal{H}} \Vert \leq \Vert  I-P_{\mathcal{W}} \Vert \leq 1} \\
& \sqrt{(\ell^{-2} \varepsilon_4)^2 + \frac{(\ell^{-2} \varepsilon_2)^2}{1- (\ell^{-2} \varepsilon_2)} } \Vert \phi_c \Vert^2 \leq^{\Cref{lemma: phi c V bounds}} \\
& \frac{n \ell^2}{4} \sqrt{(\ell^{-2} \varepsilon_4)^2 + \frac{(\ell^{-2} \varepsilon_2)^2}{1- (\ell^{-2} \varepsilon_2)} }  \frac{1}{1-\varepsilon_3}  \Vert \phi_c^{\mathcal{W}} \Vert^2 = \\
& \frac{n}{4} \sqrt{\varepsilon_4^2 + \frac{\varepsilon_2^2}{1- (\ell^{-2} \varepsilon_2)} }  \frac{1}{1-\varepsilon_3}  \Vert \phi_c^{\mathcal{W}} \Vert^2.
\end{align*}

Proof of the second inequality:
\begin{align*}
& \langle (M (I-P_{\mathcal{W}}) \otimes I_{\mathcal{H}}) \phi_c,  (P_{\mathcal{W}} \otimes I_{\mathcal{H}}) \phi_c \rangle \leq \\
& \Vert (M (I-P_{\mathcal{W}}) \otimes I_{\mathcal{H}}) \phi_c \Vert \Vert (P_{\mathcal{W}} \otimes I_{\mathcal{H}}) \phi_c \Vert \leq \\
& \sqrt{(\ell^{-2} \varepsilon_4)^2 + \frac{(\ell^{-2} \varepsilon_2)^2}{1- (\ell^{-2} \varepsilon_2)} } \Vert \phi_c \Vert  \Vert (P_{\mathcal{W}} \otimes I_{\mathcal{H}}) \phi_c \Vert \leq^{\Cref{lemma: phi c V bounds},  \Cref{corollary: bound on P V phi c}} \\
& \frac{\sqrt{n}}{2 \ell}  \sqrt{\varepsilon_4^2 + \frac{ \varepsilon_2^2}{1- (\ell^{-2} \varepsilon_2)} }  \frac{1}{\sqrt{1-\varepsilon_3}}  (1 +\frac{\sqrt{n}}{2}  \frac{\varepsilon_1}{\sqrt{1-\varepsilon_3}}  )  \Vert \phi_c^{\mathcal{W}}  \Vert^2.
\end{align*}
\end{proof}

\subsection{Proof of criterion}

Using the results above,  we can finally prove \Cref{thm: main criterion}:

\begin{proof}[Proof of \Cref{thm: main criterion}]
Assume that $(V,E)$ fulfills the conditions of \Cref{thm: main criterion}.  Assume towards contradiction that $\Gamma$ has a non-zero harmonic $1$-cocycle $c$.  By \Cref{def: phi c V},  this implies that $\Vert \phi_c^{\mathcal{W}} \Vert >0$.  However, 
\begin{align*}
& 0 \leq^{\Cref{prop: M phi c}} \langle (M \otimes I_{\mathcal{H}}) \phi_c,  \phi_c \rangle = \\
&\langle (M P_{\mathcal{W}} \otimes I_{\mathcal{H}}) \phi_c,  (P_{\mathcal{W}} \otimes I_{\mathcal{H}}) \phi_c \rangle   + \\
& 2 \langle (M (I-P_{\mathcal{W}}) \otimes I_{\mathcal{H}}) \phi_c,  (P_{\mathcal{W}} \otimes I_{\mathcal{H}}) \phi_c \rangle + \\
& \langle (M (I-P_{\mathcal{W}}) \otimes I_{\mathcal{H}}) \phi_c,  ((I-P_{\mathcal{W}}) \otimes I_{\mathcal{H}}) \phi_c \rangle \leq^{\Cref{thm: aux1},  \Cref{thm: aux2}} \\
& \left(- \frac{1}{2n-1} +  \frac{\varepsilon_1+ 4 (\ell^{-2} \varepsilon_2)}{\sqrt{1-\varepsilon_3}} \sqrt{n} +   \frac{\varepsilon_1^2}{1-\varepsilon_3} \frac{n}{4}  \right) \Vert \phi_c^{\mathcal{W}} \Vert^2 + \\
& \left( \frac{\sqrt{n}}{ \ell}  \sqrt{\varepsilon_4^2 + \frac{ \varepsilon_2^2}{1- (\ell^{-2} \varepsilon_2)} }  \frac{1}{\sqrt{1-\varepsilon_3}}  (1 +\frac{\sqrt{n}}{2}  \frac{\varepsilon_1}{\sqrt{1-\varepsilon_3}}  ) \right) \Vert \phi_c^{\mathcal{W}} \Vert^2 + \\
& \left( \frac{n}{4} \sqrt{ \varepsilon_4^2 + \frac{ \varepsilon_2^2}{1- (\ell^{-2} \varepsilon_2)} }  \frac{1}{1-\varepsilon_3}  
\right) \Vert \phi_c^{\mathcal{W}} \Vert^2 <^{\text{assumptions of } \Cref{thm: main criterion}, \Vert \phi_c^{\mathcal{W}} \Vert >0 } 0 
\end{align*}
which is a contradiction.
\end{proof}

\subsection{Sufficient conditions}
\label{subsection: Sufficient conditions}
Here we will give simpler sufficient conditions than the one given in \Cref{thm: main criterion}.  

Let $(V, E)$ be as above.  We will decompose $(V,E)$ into four graphs $G^i = (V^i,  E^i)$ as follows: We recall that $w_1$ cyclically precedes $w_2$ in $r$ if one of the following occurs: 
\begin{enumerate}
\item There is $w$ of length $\frac{\ell}{2}$ such that $w_1 w_2 w  = r$.
\item There are $w,  w' $ of length $\frac{\ell}{4}$ such that $w w_1 w_2 w' =r$.
\item There  is $w$ of length $\frac{\ell}{2}$ such that $w w_1 w_2  = r$.
\item There  is $w$ of length $\frac{\ell}{2}$ such that $w_2 w w_1  =r$.
\end{enumerate}

We decompose $(V,E)$ according to these four different cases:  $(V^1,  E^1)$ is the multigraph with the vertex set $V^1 = V =V_0 \cup V_1$  and the number of edges between $(0, w_1) \in V_0$ and $(1,w_2) \in V_1$ is the number of $r \in R$ such that there is $w$ of length $\frac{\ell}{2}$ such that $w_1^{-1} w_2 w  = r$.  Similarly define $(V^2,  E^2)$ where the edges are defined according to condition 2 above (there are $w,  w' $ of length $\frac{\ell}{4}$ such that $w w_1^{-1} w_2 w' =r$) and so on.  Then every edge copy in $E$ is exactly in one of the $E^1,...,E^4$.  

We define $m^i$ and $m_{s,s'}^i$ to be the vertex degrees in $(V^i,  E^i)$ and in $(V_{s,s'}^i,  E_{s,s'}^i)$ in analogy to the definitions of $m$ and $m_{s,s'}$ above.  We note that for every $v \in V$,  $m (v) = \sum_{i=1}^{4} m^i (v)$ and for every $v \in V_{s,s'}$,  $m_{s,s'} (v) = \sum_{i=1}^{4} m_{s,s'}^i (v)$.  

\begin{theorem}
\label{thm: sufficient cond}
Assume that there are $1 > \mu ,  \lambda >0$ and $D >0$ such that for every $1 \leq i \leq 4$ and every $s, s' \in S,  s \neq s'$, it holds that  
\begin{itemize}
\item For every $v \in V_{s,s'}^i$,  $\vert m_{s,s'}^i (v) - D \vert \leq \mu D$.
\item $\lambda ((V_{s,s'}^i,  E_{s,s ' }^i)) \leq \lambda$.
\end{itemize}
Then 
\begin{enumerate}[label=(\arabic*)]
\item For every $i =0,1$ every $s \in S$ and every $v \in V_i^s$ it holds that 
$$\left\vert 1 - \frac{ m (V_i^s)}{\vert V_i^s  \vert m(v)} \right\vert \leq  \frac{2 \mu}{1 - \mu}.$$
\item For every $s,s' \in S,  s \neq s'$,  
$$\forall v \in V_{s,s'}, \left\vert \frac{m_{s,s'} (v)}{m (v)} - \frac{1}{2n-1} \right\vert \leq \frac{2\mu}{1-\mu}  \frac{1}{2n-1} .$$
\item For every $i=0,1$ and $s \in S$, 
$$\frac{m (V_i^{s})}{m (V)} \geq \frac{1-\mu}{1+\mu} \frac{1}{4n}.$$
\item For every $s,s' \in S,  s \neq s'$,  the graph $(V_{s,s'},  E_{s,s'})$ is connected and moreover $\lambda ((V_{s,s'},  E_{s,s'}))  \leq \lambda + \frac{16 \mu^2}{(1-\mu)^4} $.
\end{enumerate}
Also,  there is $\varepsilon_0 >0$ that depends only on $n$ and independent of $\ell$ such that if $\ell^{-2} \varepsilon_0 > \mu , \lambda $,  then $\Gamma$ has property (T).
\end{theorem}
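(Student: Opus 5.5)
The plan is to read off all four conditions of \Cref{thm: main criterion} from the per-type almost-regularity, using only the decompositions $m(v)=\sum_{i=1}^4 m^i(v)$ and $m_{s,s'}(v)=\sum_{i=1}^4 m^i_{s,s'}(v)$. The key structural fact is that for $v\in V_0^s$, \Cref{section: Property (T) criterion}'s observation gives $m(v)=\sum_{s'\neq s} m_{s,s'}(v)$, a sum over exactly $2n-1$ indices. The same holds for $v\in V_1^{s'}$, summing over $s\neq s'$.

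\emph{Degree estimates.} By hypothesis each $m^i_{s,s'}(v)\in[(1-\mu)D,(1+\mu)D]$, so $m_{s,s'}(v)\in[4(1-\mu)D,4(1+\mu)D]$ and $m(v)\in[4(2n-1)(1-\mu)D,\,4(2n-1)(1+\mu)D]$ for every $v$.

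\emph{Condition (2).} We have
\[
\frac{m_{s,s'}(v)}{m(v)}\in\left[\frac{1-\mu}{1+\mu}\frac{1}{2n-1},\ \frac{1+\mu}{1-\mu}\frac{1}{2n-1}\right].
\]
Since $\frac{1+\mu}{1-\mu}-1=\frac{2\mu}{1-\mu}$ dominates $1-\frac{1-\mu}{1+\mu}$, this gives (2).

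\emph{Condition (1).} The quantity $m(V_i^s)/|V_i^s|$ is an average of degrees, so it also lies in $[4(2n-1)(1-\mu)D,4(2n-1)(1+\mu)D]$. The ratio to $m(v)$ therefore lies in the same interval as above, which gives (1).

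\emph{Condition (3).} Here $m(V_i^s)\ge |V_i^s|\,4(2n-1)(1-\mu)D$ and $m(V)\le |V|\,4(2n-1)(1+\mu)D$. Moreover $|V_i^s|/|V|=\frac{1}{2}\cdot\frac{1}{2n}$, since the $2n$ first letters partition each side equally. This yields (3).

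\emph{Condition (4).} I will apply \Cref{prop: decomp prop} to $(V_{s,s'},E_{s,s'})$ with the disjoint partition into $(V^i_{s,s'},E^i_{s,s'})$, $i=1,\dots,4$. Each part is $\mu$-almost regular with constant $D$ and has $\lambda\le\lambda$. The proposition requires each part to be connected. This is implicit in the hypothesis: $\lambda(\cdot)\le\lambda<1$ forces eigenvalue $1$ to be simple. The whole graph is connected because already one part is connected on the full vertex set, so connectivity of the union is automatic. The conclusion is $\lambda\le\lambda+16\mu^2/(1-\mu)^4$, as stated.

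\emph{Final claim.} I will set $\varepsilon_1=\ell\cdot\frac{2\mu}{1-\mu}$, $\varepsilon_2=\ell^2\frac{2\mu}{1-\mu}$, $\varepsilon_3=1-\frac{1-\mu}{1+\mu}=\frac{2\mu}{1+\mu}$, and $\varepsilon_4=\ell^2(\lambda+16\mu^2/(1-\mu)^4)$, and check the inequality in \Cref{thm: main criterion}.

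If $\mu,\lambda<\ell^{-2}\varepsilon_0$, then:
\begin{itemize}
\item $\varepsilon_1\le 4\varepsilon_0/\ell\le\varepsilon_0$;
\item $\varepsilon_2\le 4\varepsilon_0$;
\item $\varepsilon_3\le 2\varepsilon_0$;
\item $\varepsilon_4\le\varepsilon_0+32\varepsilon_0^2\ell^{-2}\le 2\varepsilon_0$;
\item $\ell^{-2}\varepsilon_2\le 4\varepsilon_0$.
\end{itemize}
Thus all $\varepsilon_j=O(\varepsilon_0)$ uniformly in $\ell$.

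Each term on the right-hand side of the criterion is then bounded by $C(n)\varepsilon_0$ with $C(n)$ independent of $\ell$. Note that the $\sqrt n/\ell$ term only helps, and the $\frac n4\sqrt{\varepsilon_4^2+\varepsilon_2^2/(1-\ell^{-2}\varepsilon_2)}$ term is $O(n\varepsilon_0)$. Choosing $\varepsilon_0$ small in terms of $n$ (with all $\varepsilon_j<1$) makes the sum less than $1/(2n-1)$, so $\Gamma$ has property (T).

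The main obstacle is the bookkeeping in the last step. The scalings of $\varepsilon_1$ (by $\ell$) versus $\varepsilon_2,\varepsilon_4$ (by $\ell^2$) must match how they enter the criterion. The crucial point is that the hypothesis $\mu<\ell^{-2}\varepsilon_0$ is strong enough that the $\ell^{-1}$ normalization in condition (1) costs nothing. Everything else is direct.
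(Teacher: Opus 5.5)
Your proposal is correct and follows the paper's proof essentially step by step: you use the same degree sandwich $m(v)\in[4(2n-1)(1-\mu)D,\,4(2n-1)(1+\mu)D]$, which rests on the observation that $v\in V_0^s$ only has neighbors in $V_1^{s'}$ for $s'\neq s$, to get (1)--(3), and you obtain (4) from \Cref{prop: decomp prop} applied to the four-type partition. You also supply two details the paper leaves implicit: connectivity of each part follows from $\lambda<1$ (with no isolated vertices, by the degree bound), and an explicit choice of $\varepsilon_1,\dots,\varepsilon_4$ shows that the inequality of \Cref{thm: main criterion} holds with constants depending only on $n$, uniformly in $\ell$.
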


\begin{proof}
Let $v \in V$.  Assume that $v \in V_0^s$.  Then for every $1 \leq i \leq 4$,  it follows that 
\begin{align*}
\vert m_{s,s'} (v) - 4D \vert \leq \sum_{i=1}^{4} \vert m_{s,s'}^i (v) - D \vert \leq \mu (4D),
\end{align*}
and
\begin{align*}
\vert m (v) - 4 (2n-1) D \vert \leq \sum_{s' \in S,  s' \neq s} \vert m_{s,s'} (v) - 4D \vert \leq \mu (4(2n-1) D).
\end{align*}
A similar computation holds in the case where $v \in V_1^{s'}$.  Thus for every $v \in V$,   $\vert m (v) - 4 (2n-1) D \vert \leq \mu (4(2n-1) D)$ and for every $s,s ' \in S,  s \neq s'$ and every $v \in V_{s,s'}$,  $\vert m_{s,s'} (v) - 4D \vert \leq \mu (4D)$. 

Proving (1) - (3)  is now straightforward.
  
For (1),  we note that for every $i =0,1$,   every $s \in S$ and every $v \in V_i^s$
\begin{align*}
 \frac{m (V_i^s)}{\vert V_i^s \vert m(v)}  \leq 
\frac{(1+\mu) (4(2n-1) D) \vert V_i^s \vert}{\vert V_i^s \vert (1- \mu ) (4(2n-1) D)} = 
\frac{1+\mu}{1 - \mu}
\end{align*}
and similarly
\begin{align*}
 \frac{m (V_i^s)}{\vert V_i^s \vert m(v)}  \geq \frac{1-\mu}{1 + \mu}
\end{align*}

Therefore 
$$\left\vert 1-  \frac{m (V_i^s)}{\vert V_i^s \vert m(v)} \right\vert \leq \frac{2 \mu}{1 - \mu} .$$

For (2),  fix $s,s' \in S,  s \neq s'$ and $v \in V_{s,s'}$,  then
\begin{align*}
\frac{m_{s,s'} (v)}{m(v)} - \frac{1}{2n-1} \geq 
\frac{(1- \mu) 4D}{ (1+ \mu) (4(2n-1) D)}- \frac{1}{2n-1} =
\frac{1-\mu}{1+\mu} \frac{1}{2n-1} - \frac{1}{2n-1} = \frac{-2\mu}{1+\mu}  \frac{1}{2n-1}
\end{align*}
and similarly,
\begin{align*}
\frac{m_{s,s'} (v)}{m(v)} - \frac{1}{2n-1} \leq \frac{2\mu}{1-\mu}  \frac{1}{2n-1}.
\end{align*}

For (3),   we note that for every $i=0,1$ and every $s \in S$,
\begin{align*}
\vert m (V_i^s) - \vert V_i^s \vert (4(2n-1)) D \vert \leq \sum_{v \in V_i^s} \vert m (v) - (4(2n-1)) D \vert \leq \mu \vert V_i^s \vert (4(2n-1)) D.
\end{align*}

Thus (noting that all the $V_i^s$'s are of the same cardinality), it follows that:
\begin{align*}
\frac{m (V_i^s)}{m(V)} \geq \frac{(1 - \mu) \vert V_i^s \vert (4(2n-1)) D}{(1+\mu) \vert V \vert (4(2n-1)) D} = \frac{1-\mu}{1+\mu} \frac{ \vert V_i^s \vert }{\vert V \vert} =  \frac{1-\mu}{1+\mu} \frac{1}{4n}.
\end{align*}

In order to prove (4),  we will use \Cref{prop: decomp prop}.  We note that by our assumptions,  every  $(V_{s,s'}^i , E_{s, s'}^i)$ is $\mu$-almost regular and connected with second eigenvalue $\leq \lambda$.  Thus,  by \Cref{prop: decomp prop} it holds for every $s,s' \in S,  s \neq s'$ that 
$$\lambda ((V_{s,s'},E_{s,s'})) \leq \lambda + \frac{16 \mu^2}{(1-\mu)^4} $$
as needed.

The conclusion regarding a sufficient condition for property (T) given that $\ell^{-2} \varepsilon_0 > \mu, \lambda$ follows from comparing (1)-(4) with the conditions of \Cref{thm: main criterion}.
\end{proof}

\section{Property (T) for random groups in the density model}
\label{section: Property (T) for random groups in the density model}

An alternative model to the density model mentioned in the introduction is the binomial model:
\begin{definition}[Gromov binomial model]
Let $n \in \mathbb{N}, n \geq 2$ be a constant and $\ell \in \mathbb{N}$ be a parameter and $\varrho : \mathbb{N} \rightarrow (0,1)$.  A random group in the Gromov binomial model $\mathcal{B} (n,\ell,\varrho)$ is a group $\Gamma = \langle S  \vert R \rangle$ where $S = \lbrace s_1^{\pm},...,s_n^{\pm} \rbrace$ and $R$ is a set of relators of length $\ell$ (in $S$) chosen as follows: R is obtained by selecting each cyclically reduced word of length \(\ell\) independently with probability \(\varrho(\ell)\).

For a group property $P$, we say that $P$ holds asymptotically almost surely (a.a.s.) in $\mathcal{B} (n,\ell,\varrho)$ if 
$$\lim_{\ell \rightarrow \infty} \mathbb{P} (\Gamma \in \mathcal{B} (n,\ell,\varrho) \text{ has property } P) = 1.$$
\end{definition}

\begin{remark}
We will relax the definition of asymptotically almost surely in order to discuss only cases where $\ell$ is divisible by $4$.  E.g.,  in the binomial model we will write that a property $P$ holds a.a.s.  for $4 \mid \ell$ if 
$$\lim_{\ell \rightarrow \infty,  4 \mid \ell} \mathbb{P} (\Gamma \in \mathcal{B} (n,\ell,\varrho) \text{ has property } P) = 1.$$
\end{remark}

Property (T) for the binomial model can imply property (T) for the density model by the following:
\begin{proposition}\cite[Proposition 10.2]{DrutuM}
\label{prop: DrutuM}
Given $d_0 >0$,  if for every $d > d_0$,  $\Gamma \in \mathcal{B} (n,\ell,\varrho)$ has property (T) a.a.s. for $\varrho (\ell)  = (2n-1)^{-(1-d) \ell}$,  then for every $d > d_0$,  $\Gamma \in \mathcal{D} (n,\ell,d)$ has property (T) a.a.s.
\end{proposition}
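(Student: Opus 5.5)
The plan is a monotone coupling between the two models. It rests on the fact that property (T) passes to quotients. Adding relators to $R$ replaces $\Gamma$ by a quotient, so if $R \subseteq R'$ and $\langle S \vert R \rangle$ has (T), then $\langle S \vert R' \rangle$ has (T) as well. Hence "failing (T)" is a monotone \emph{decreasing} property of the relator set $R$. Throughout, $\ell$ ranges over the relevant lengths (in our application, multiples of $4$). For the density model the same subsequence can be used, since the argument below is performed for each fixed $\ell$.

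Fix $d > d_0$ and choose $d'$ with $d_0 < d' < d$. Let $N_\ell$ be the number of cyclically reduced words of length $\ell$; we have $(2n-1)^{\ell} \leq N_\ell \leq 2n(2n-1)^{\ell-1}$ up to harmless constants. Put $K = \lfloor (2n-1)^{d\ell} \rfloor$. In $\mathcal{B}(n,\ell,\varrho)$ with $\varrho(\ell) = (2n-1)^{-(1-d')\ell}$, the size $|R|$ is binomial with mean $N_\ell \varrho(\ell) \leq C(2n-1)^{d'\ell} = o(K)$. Markov's inequality therefore gives $\mathbb{P}(|R| > K) \to 0$, so $\mathbb{P}(|R| \leq K) \to 1$. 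Moreover, conditioned on $|R| = k$, the set $R$ is a uniformly random $k$-subset $U_k$ of the cyclically reduced words of length $\ell$.

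Next we compare uniform subsets of different sizes. For $k \leq K$, one can couple $U_k \subseteq U_K$: take a uniform $K$-subset and then a uniform $k$-subset of it. By monotonicity,
$$\mathbb{P}(\langle S\vert U_k\rangle \text{ fails (T)}) \geq \mathbb{P}(\langle S\vert U_K\rangle \text{ fails (T)}).$$
Since $U_K$ is exactly the density model $\mathcal{D}(n,\ell,d)$, we get
$$\mathbb{P}(\Gamma \in \mathcal{B} \text{ fails (T)}) \geq \sum_{k \leq K} \mathbb{P}(|R|=k)\, \mathbb{P}(U_k \text{ fails (T)}) \geq \mathbb{P}(|R|\leq K)\, \mathbb{P}(\Gamma\in\mathcal{D}(n,\ell,d) \text{ fails (T)}).$$

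The hypothesis applied at density $d' > d_0$ says the left side tends to $0$, and we showed $\mathbb{P}(|R| \leq K) \to 1$. Hence $\mathbb{P}(\Gamma \in \mathcal{D}(n,\ell,d) \text{ fails (T)}) \to 0$. I do not expect a genuine obstacle. The only points requiring care are the choice of the intermediate density $d' < d$, which is what makes $|R| \leq K$ hold a.a.s. (a first-moment bound suffices), and the observation that conditioning the binomial model on its size gives the uniform model. The latter is immediate from the independence of the selections.
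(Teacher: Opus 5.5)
Your proof is correct and complete. The argument runs as follows: choose an intermediate density $d_0<d'<d$; use a first-moment bound to show $|R|\le K=\lfloor (2n-1)^{d\ell}\rfloor$ a.a.s. in $\mathcal{B}(n,\ell,(2n-1)^{-(1-d')\ell})$; note that conditioning on $|R|=k$ gives a uniform $k$-subset; then couple $U_k\subseteq U_K$, so that (T) passing to quotients yields $\mathbb{P}(\mathcal{B}\text{ fails (T)})\ge \mathbb{P}(|R|\le K)\,\mathbb{P}(\mathcal{D}(n,\ell,d)\text{ fails (T)})$. This is the standard monotone-coupling transfer argument.

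The paper gives no proof of its own and simply imports the statement from \cite[Proposition 10.2]{DrutuM}, so your write-up makes it self-contained. Because your argument works separately for each fixed $\ell$, it also applies unchanged along the subsequence $4\mid\ell$, which is the form the paper actually needs.
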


Below,  we will show that for any $ d > \frac{1}{4}$,  when $4 \mid \ell$,   $\Gamma \in \mathcal{B} (n,\ell,\varrho)$ with $\varrho (\ell)  = (2n-1)^{-(1-d) \ell}$ satisfies the conditions of \Cref{thm: main criterion} a.a.s. and thus has property (T) a.a.s.  

Fix some $ d > \frac{1}{4}$ and let $\Gamma \in \mathcal{B} (n,\ell,\varrho)$ with $\varrho (\ell)  = (2n-1)^{-(1-d) \ell}$ with $4 \mid \ell$.  By definition $\Gamma = \langle S \vert R \rangle$.  Let $G^1,...,G^4$ be the graphs associated to the presentation of $\Gamma$ that were defined in \Cref{subsection: Sufficient conditions}.  We will show that $\Gamma \in \mathcal{B} (n,\ell,\varrho)$ has property (T) a.a.s.  by showing that the conditions of \Cref{thm: sufficient cond} hold a.a.s.  

In order to verify the conditions of \Cref{thm: main criterion},  we will need the following:

\begin{proposition}
\label{prop: counting words}
Let $2 \leq k < \ell$ and let $w$ be a reduced word of length $k$.  It holds that  
$$\vert \lbrace w' : ww' \text{ is a cyclically reduced word of length } \ell \rbrace \vert = \frac{(2n-1)^{\ell-k+1}}{2n} + O (1),$$
where $O(1)$ above does not depend on $\ell$ or on the prefix or its on length.
\end{proposition}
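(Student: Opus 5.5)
We count the completions $w'$ of length $m=\ell-k\geq 1$ by a transfer-matrix argument. The word $ww'$ is cyclically reduced of length $\ell$ exactly when three things hold: $w'$ is reduced, its first letter $x_1$ satisfies $x_1 \neq a^{-1}$ where $a$ is the last letter of $w$, and its last letter $x_m$ satisfies $x_m \neq b^{-1}$ where $b$ is the first letter of $w$. Since $w$ is already reduced, nothing else is required. So we must count reduced words of length $m$ whose first letter avoids one prescribed letter and whose last letter avoids another.

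\emph{Step 1.} Let $A$ be the $2n\times 2n$ matrix indexed by $S$ with $A(x,y)=1$ if $y\neq x^{-1}$ and $0$ otherwise. The number of reduced words of length $m$ with first letter $x$ and last letter $y$ is $A^{m-1}(x,y)$. Write $A = J - P$, where $J$ is the all-ones matrix and $P$ is the permutation matrix of inversion. The matrix $P$ is an involution commuting with $J$, and $JP=PJ=J$. The eigenvalues of $A$ are:
\begin{itemize}
\item $2n-1$ on $\mathbbm{1}$;
\item $-1$ on the $(+1)$-eigenvectors of $P$ orthogonal to $\mathbbm{1}$;
\item $+1$ on the $(-1)$-eigenvectors of $P$.
\end{itemize}
Therefore
$$A^{m-1} = \frac{(2n-1)^{m-1}}{2n}J + E_m,$$
where every entry of $E_m$ is bounded by an absolute constant depending only on $n$, since the remaining eigenvalues have modulus $1$. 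Explicitly, $A^{m-1}$ equals $\frac{(2n-1)^{m-1}}{2n}J$ plus a combination of the projections onto the two eigenspaces with coefficients $(-1)^{m-1}$ and $1$.

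\emph{Step 2.} Sum $A^{m-1}(x,y)$ over $x\neq a^{-1}$ and $y\neq b^{-1}$. This is $\mathbbm{1}_{X}^T A^{m-1}\mathbbm{1}_{Y}$ with $|X|=|Y|=2n-1$. The main term is
$$\frac{(2n-1)^{m-1}}{2n}(2n-1)^2 = \frac{(2n-1)^{m+1}}{2n} = \frac{(2n-1)^{\ell-k+1}}{2n}.$$
The error is a sum of at most $(2n)^2$ bounded entries, hence $O(1)$ uniformly in $\ell$, $k$ and $w$.

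\emph{Step 3.} Handle the edge case $m=1$ separately. Here $A^0=I$ and the count is the number of letters $x$ avoiding both $a^{-1}$ and $b^{-1}$, which is $2n-1$ or $2n-2$. This agrees with $(2n-1)^2/(2n)+O(1)$, so it fits the same formula. This is also why the hypothesis $k<\ell$ is needed.

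The only real work is verifying the uniform bound on $E_m$. I would do this by writing the spectral decomposition of $A$ explicitly via the projections $\frac{1}{2n}J$, $\frac{I+P}{2}-\frac{1}{2n}J$ and $\frac{I-P}{2}$. These are genuine orthogonal projections because $PJ=J$, and their entries are bounded by $1$, so the bound on $E_m$ follows directly.
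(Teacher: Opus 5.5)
Your argument is correct. Its reduction is the same as the paper's: the paper writes $w = s w'' t$ and notes that $ww'$ is cyclically reduced of length $\ell$ exactly when $t w' s$ is a reduced word of length $\ell-k+2$. That is precisely your condition on the first and last letters of $w'$, with $a=t$ and $b=s$.

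The difference is in the counting step. The paper cites Park's Lemma 2.1 for the number of reduced words with prescribed first and last letters, while you derive this count from the spectral decomposition of $A=J-P$. Your sum is in fact exactly Park's quantity: since $\mathbbm{1}_X^T = e_a^T A$ and $\mathbbm{1}_Y = A e_b$, you get $\mathbbm{1}_X^T A^{m-1}\mathbbm{1}_Y = A^{m+1}(a,b)$. This is the number of reduced words of length $\ell-k+2$ that start with $a$ and end with $b$.

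Your route makes the proposition self-contained and gives an explicit error depending only on $n$. Every entry of $E_m$ has absolute value less than $1$, so the error is at most $(2n-1)^2$. The paper's route is shorter.

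Two minor remarks:
\begin{itemize}
\item Step 3 is superfluous. The three projections sum to $I$, so your formula already covers $A^0$.
\item The hypothesis $k<\ell$ is not needed for the statement to be true. For $k=\ell$ the count is $0$ or $1$, which is also $\frac{2n-1}{2n}+O(1)$. It only ensures that $w'$ is nonempty in your formulation.
\end{itemize}
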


\begin{proof}
Let $w = s w'' t$,  then $ww' $ is a cyclically reduced word of length $\ell$ if and only if $t w' s$ is a reduced word of length $\ell - k + 2$.  In \cite[Lemma 2.1]{Park},  it was shown that for every $s,t \in S$,  the number of reduced words of length $\ell -k  + 2$ that start with $t$ and end with $s$ is $\frac{(2n-1)^{\ell-k+1}}{2n} + O (1)$.
\end{proof}

\begin{lemma}
\label{lemma: degree reg}
Let $d > \frac{1}{4}$ and  $\Gamma \in \mathcal{B} (n,\ell,\varrho)$ with $\varrho (\ell)  = (2n-1)^{-(1-d) \ell}$ with $4 \mid \ell$.  Define
$D = \frac{(2n-1)^{(d - \frac{1}{4}) \ell}}{2n}$.  Then for every $1 \leq i \leq 4$ and every $s, s' \in S,  s \neq s'$, it holds a.a.s.  that for every $v \in V_{s,s'}^i$,  $\vert m_{s,s'}^i (v) - D \vert \leq D^{\frac{3}{4}} \leq \frac{1}{\ell^{10}} D$.
\end{lemma}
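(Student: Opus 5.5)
Each degree $m^i_{s,s'}(v)$ is a sum of independent Bernoulli indicators, so the plan is a Chernoff bound for each vertex followed by a union bound over vertices. Fix $i$, $s\neq s'$ and a vertex $v$, say $v=(0,w_0)\in V_0^s$; the case $v\in V_1^{s'}$ is symmetric. By definition, $m^i_{s,s'}(v)$ counts pairs $(r,w_1)$ with $r\in R$, $w_1\in W_{s',\ell/4}$, such that $w_0^{-1}w_1$ sits in $r$ in the position prescribed by case $i$. Conversely, every cyclically reduced word $r$ of length $\ell$ in which $w_0^{-1}$ occupies the prescribed quarter, and whose next quarter (cyclically) begins with $s'$, contributes at most a bounded number of edges, and generically exactly one.

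To see this, rotate $r$ so that the occurrence is a prefix. In each of the four cases the constraint is then that the cyclic word reads $w_0^{-1}w_1w$, with $w$ of length $\ell/2$, at a fixed cyclic offset. Cyclic reducedness across the boundary in cases (3) and (4) is handled by this rotation. So the indicator variables are indexed by cyclically reduced words $u$ of length $\ell$ that begin with the fixed prefix $w_0^{-1}t$, where $t=s'$ and the prefix has length $\ell/4+1$, and that are placed at a fixed offset. In case (2), for example, $r=w\,w_0^{-1}w_1w'$ determines $u$, and distinct $u$ give distinct $r$. By \Cref{prop: counting words} with $k=\ell/4+1$, the number of such $u$ is
\[
N=\frac{(2n-1)^{3\ell/4}}{2n}+O(1).
\]
Each such $r$ is included independently with probability $\varrho=(2n-1)^{-(1-d)\ell}$. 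Multiplicities, such as $r$ being periodic and containing the pattern several times at different offsets, only occur for $r$ with a nontrivial period. These are at most $(2n-1)^{\ell/2}$ in number, so their expected contribution is $o(1)$ for $d<1$; a Markov plus union bound shows that a.a.s.\ no periodic word lies in $R$ at all, and I will discard them.

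Hence $m^i_{s,s'}(v)$ is, up to this negligible event, a sum of $N$ independent Bernoulli$(\varrho)$ variables with mean
\[
N\varrho=\frac{(2n-1)^{(d-1/4)\ell}}{2n}+O\!\left((2n-1)^{-(1-d)\ell}\right)=D+o(1).
\]
The Chernoff bound with deviation $t=D^{3/4}-1$ gives
\[
\mathbb{P}\bigl(|m^i_{s,s'}(v)-D|>D^{3/4}\bigr)\le 2\exp\!\left(-cD^{1/2}\right).
\]
Since $d>1/4$, $D$ grows exponentially in $\ell$, so this probability is $\exp(-c(2n-1)^{(d-1/4)\ell/2})$, which is doubly exponentially small. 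The union bound runs over at most $2(2n)^2\cdot 4\cdot(2n-1)^{\ell/4}$ choices of $i$, $s$, $s'$ and $v$, which is only exponential, so the total failure probability tends to $0$. Finally $D^{3/4}\le \ell^{-10}D$ is equivalent to $D^{1/4}\ge \ell^{10}$, which holds for large $\ell$ because $D$ is exponential in $\ell$.

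The main obstacle is the bijection step: checking carefully that in each of the four cases the edges at $v$ correspond exactly to independent indicators indexed by distinct relators. Two things need care here. Relators containing $w_0^{-1}$ at the relevant position could in principle be double counted, and the cyclic wrap-around in cases (3) and (4) must be handled. I would handle both by rotating to put $w_0^{-1}$ first and noting that a fixed offset determines $r$ uniquely from $u$; the remaining coincidences come only from periodic relators, which the separate a.a.s.\ estimate above excludes.
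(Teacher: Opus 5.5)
Your argument is essentially the paper's, and it is correct once you delete one superfluous step that is false for part of the range of $d$. The shared core is this: $m^i_{s,s'}(v)$ is binomial, with the number of trials given by \Cref{prop: counting words} applied to the prefix $w_0^{-1}s'$ of length $\frac{\ell}{4}+1$. You then apply Chernoff with deviation $D^{3/4}$, and the doubly exponential tail beats the exponential union bound over vertices. Your rotation argument for cases (2)--(4) just makes explicit the paper's ``by similar arguments''.

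The step to delete is the periodic-word detour. It is unnecessary: for fixed $i$ and $v=(0,w_0)$, case $i$ fixes the positions of both quarters. So a relator $r$ determines $w_1$ and contributes exactly $0$ or $1$ to $m^i_{s,s'}(v)$, not ``a bounded number''. Hence $m^i_{s,s'}(v)=\lvert R\cap\mathcal{P}\rvert$, where $\mathcal{P}$ is the set of cyclically reduced words carrying $w_0^{-1}s'$ at the prescribed offset. This is an exact binomial, with no exceptional event to discard. Periodicity (e.g.\ $r=wwww$) only makes a relator contribute to several of the graphs $E^1,\dots,E^4$, and that is irrelevant because degrees are taken separately for each $i$.

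The detour is also wrong as stated. Your claim that a.a.s.\ no periodic word lies in $R$ fails for $d\ge\frac12$. There are about $(2n-1)^{\ell/2}$ cyclically reduced words of the form $uu$, so the expected number of them in $R$ is of order $(2n-1)^{(d-\frac12)\ell}$. This does not tend to $0$, and it tends to infinity for $d>\frac12$. Since the lemma is stated for all $d>\frac14$, relying on that discard would break the proof for $d\ge\frac12$.
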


\begin{proof}
We define $D = \frac{(2n-1)^{(d - \frac{1}{4}) \ell}}{2n}$.  Fix $s, s' \in S,  s \neq s'$.  For a fixed $(0,w) \in V_0^s$,  $m^1_{s,s'} ((0,w))$ is the number of words in $R$ with the prefix $w^{-1} s'$.  This number behaves according to the following binomial distribution: by \Cref{prop: counting words},  the number of cyclically reduced words that start with $w^{-1} s'$ is $\frac{(2n-1)^{\frac{3 \ell}{4}}}{2n} + O (1)$, so 
$$m^1_{s,s'} ((0,w))  \sim B (\frac{(2n-1)^{\frac{3 \ell}{4}}}{2n} + O (1),  \varrho (\ell) ) = B (\frac{(2n-1)^{\frac{3 \ell}{4}}}{2n} + O (1),   (2n-1)^{-(1-d) \ell}).$$
Therefore $\mathbb{E} m^1_{s,s'} ((0,w)) = D + O (\varrho (\ell))$.  By the Chernoff bound it holds that 
$$\mathbb{P} (\vert m^1_{s,s'} ((0,w)) - D \vert \geq D^{\frac{3}{4}}) \leq 2 \exp \left( \frac{- c \sqrt{D}}{3} \right),$$
where $c >0$ is some fixed constant for every large $\ell$ inserted to deal with the fact that we took $D$ instead to the exact expectation which is  $D + O (\varrho (\ell))$.
By similar arguments,  such a bound holds for every $s,s' \in S,  s \neq s'$, every vertex $v \in V_{s,s'}$ and every $1 \leq i \leq 4$.  Thus,  by union bound,  
\begin{align*}
\sum_{i=1}^4 \sum_{s,s' \in S,  s \neq s'} \sum_{v \in V_{s,s'}} \mathbb{P} (\vert m^1_{s,s'} (v) - D \vert \geq D^{\frac{3}{4}}) \leq 8 (2n-1) \vert V \vert \exp \left( \frac{- c \sqrt{D}}{3} \right).
\end{align*}
Note that $\vert V \vert$ grows exponentially with $\ell$ and $\exp \left( \frac{- c \sqrt{D}}{3} \right)$ decays doubly exponentially with $\ell$ and therefore this probability tends to $0$.  Thus it holds a.a.s.  for every $s,s' \in S,  s \neq s'$,  every $v \in V_{s,s'}$ and every $1 \leq i \leq 4$ that 
\begin{align*}
\vert m_{s,s'}^i (v) -D \vert \leq D^{\frac{3}{4}} = D^{-\frac{1}{4}} D = \left( \frac{(2n-1)^{(d - \frac{1}{4}) \ell}}{2n} \right)^{- \frac{1}{4}} D <^{\ell \text{ sufficiently large}} \frac{1}{\ell^{10}} D,
\end{align*}
as needed.

\end{proof}

\begin{lemma}
\label{lemma: double edges}
Let $\frac{1}{3} \geq d > \frac{1}{4}$ and  $\Gamma \in \mathcal{B} (n,\ell,\varrho)$ with $\varrho (\ell)  = (2n-1)^{-(1-d) \ell}$ with $4 \mid \ell$.  
For every $1 \leq i \leq 4$,  the following holds a.a.s. :
\begin{enumerate}
\item There is no $v,u \in V$ such that $m^i (\lbrace u,v \rbrace) \geq 4$.
\item There is no $v \in V$ such that $\vert \lbrace u \in V : m^i (\lbrace u,v \rbrace) \geq 2 \rbrace \vert \geq 5$.
\end{enumerate}
\end{lemma}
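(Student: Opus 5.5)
The plan is a first-moment (expected count) argument combined with Markov's inequality, carried out separately for each $1 \leq i \leq 4$. Fix $i$. By the definition of $(V^i,E^i)$ in \Cref{subsection: Sufficient conditions}, an edge of $E^i$ between $(0,w_1)$ and $(1,w_2)$ is a relator $r \in R$ in which the word $w_1^{-1}w_2$ (of length $\frac{\ell}{2}$) occupies one \emph{fixed} set of positions. For $i=1,2,3$ these are consecutive positions; for $i=4$ they are the last $\frac{\ell}{4}$ and the first $\frac{\ell}{4}$ letters of $r$.

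Two consequences follow.

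$\bullet$ A single relator contributes at most one edge to $m^i(\{u,v\})$. Hence $m^i(\{u,v\}) \geq k$ forces $k$ \emph{distinct} relators, which are chosen independently with probability $\varrho(\ell)=(2n-1)^{-(1-d)\ell}$.

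$\bullet$ The number $N$ of cyclically reduced words of length $\ell$ with the prescribed half is at most $C(2n-1)^{\ell/2}$. This follows from \Cref{prop: counting words}, applied after a cyclic rotation in the case $i=4$.

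So for a fixed pair $(u,v)$ we get $\mathbb{P}(m^i(\{u,v\}) \geq k) \leq \binom{N}{k}\varrho^k \leq (N\varrho)^k \leq C^k (2n-1)^{k(d-\frac12)\ell}$.

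For (1), take a union bound over the at most $(2n-1)^{\ell/2}$ pairs $(w_1,w_2)$ with $k=4$. The expected number of bad pairs is at most
$$C^4 (2n-1)^{(\frac12 + 4(d-\frac12))\ell} = C^4(2n-1)^{(4d-\frac32)\ell}.$$
Since $d \leq \frac13$, the exponent satisfies $4d-\frac32 \leq -\frac16<0$, so this tends to $0$.

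For (2), fix $v$ and five distinct neighbours $u_1,\dots,u_5$ on the opposite side. The relators witnessing different pairs $\{v,u_j\}$ are distinct, because they have different prescribed halves. The event therefore requires $10$ distinct relators, two for each $j$, all present in $R$. The expected number of such configurations is at most
$$(2n-1)^{\ell/4}\cdot (2n-1)^{5\ell/4}\cdot \left(\binom{N}{2}\varrho^2\right)^5 \leq C^{10}(2n-1)^{(\frac32+10(d-\frac12))\ell} = C^{10}(2n-1)^{(10d-\frac72)\ell}.$$
For $d\leq\frac13$ the exponent satisfies $10d-\frac72 \leq -\frac16 <0$, so again Markov gives a.a.s. absence.

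The main point to check carefully is the independence and distinctness bookkeeping. The counted relators must be genuinely distinct so that the probability factorizes as $\varrho^{k}$. This holds because for a fixed $i$ the edge type pins down which half of $r$ equals $w_1^{-1}w_2$. One should also check that the same relator cannot serve two different neighbours $u_j$, and it cannot, since it determines $w_2$. The $i=4$ wrap-around case needs only the rotation remark; the rest is exponent arithmetic, which is where the hypothesis $d\leq\frac13$ is used.
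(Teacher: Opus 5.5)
Your proposal is correct and follows the paper's proof essentially step for step. It bounds each pair by $\mathbb{P}(m^i(\{u,v\})\geq k)\leq (N\varrho)^k$ with $N=O((2n-1)^{\ell/2})$ from \Cref{prop: counting words}, then takes union bounds over pairs (with $k=4$) and over a vertex plus five neighbours (with $k=2$, using independence from disjoint candidate sets), reaching the paper's exponent $-\frac{1}{6}$ at $d=\frac{1}{3}$ in both parts. Your remarks that a single relator gives at most one edge of a fixed type, and that the $i=4$ wrap-around case reduces to a prefix count by cyclic rotation, usefully spell out what the paper calls ``similar''. Your pair and neighbour counts are slightly off (for example $2n(2n-1)^{\ell/2-1}$ rather than $(2n-1)^{\ell/2}$), but these are constant factors that go into $C$.
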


\begin{proof}
We will prove the assertions for $i=1$ (the other cases are similar).  

Fix $v = (0, w_0),  u = (1, w_1) \in V$ such that $w_0$ and $w_1$ do not have the same first letter.  By \Cref{prop: counting words}, the number of cyclically reduced words of length $\ell$ with the prefix $w_0^{-1} w_1$ is $\frac{(2n-1)^{\frac{\ell}{2}+1}}{2n} + O (1)$.  Denote $N =   \frac{(2n-1)^{\frac{\ell}{2}+1}}{2n} $.  Then for fixed $v = (0, w_0),  u = (1, w_1) \in V$,  for any $r \in \mathbb{N},  r \geq 2$ there is a constant $C_0$ such that
\begin{align*}
\mathbb{P} (m^1 (\lbrace u,v \rbrace) \geq r ) \leq (N + O(1))^r (\varrho (\ell))^r \leq C_0 (2n-1)^{r(\frac{\ell}{2}+1) - r (1-d) \ell} = \\
 C_0 (2n-1)^{(d- \frac{1}{2}) r \ell +r}  \leq^{d \leq \frac{1}{3}}  (2n-1)^{-\frac{1}{6} r \ell +r} = O ((2n-1)^{-\frac{1}{6} r \ell}) .
\end{align*}

For the first assertion, we note that the number of pairs of vertices that can have an edge between them is $\leq 2n (2n-1)^{\frac{\ell}{2}}$.  Thus by union bound (using $r=4$ in the probability bound above),
\begin{align*}
\mathbb{P} (\exists u,v \in V,  m^1 (\lbrace u,v \rbrace) \geq 4 ) \leq O ( 2n (2n-1)^{\frac{\ell}{2}} (2n-1)^{-\frac{2}{3} \ell} )= O( 2n  (2n-1)^{- \frac{1}{6} \ell})
\end{align*}
and this tends to $0$ as $\ell$ tends to infinity. 

For the second assertion,  for a fixed $v \in V$ and $u_1,...,u_5 \in V$ distinct where $v$ and $u_1,...,u_5$ are on different sides.  The events \(\{m^1(\{v,u_j\})\geq2\}\), \(j=1,\ldots,5\), are mutually independent, since they depend on disjoint sets of candidate relators. Therefore (taking $r=2$ in the computation above),   
\begin{align*}
\mathbb{P} ( \forall 1 \leq i \leq 5,  m^1 (\lbrace u_i,v \rbrace) \geq 2) \leq  O( \left((2n-1)^{-\frac{1}{3}  \ell } \right)^5 )= O( (2n-1)^{-\frac{5}{3}  \ell } ).
\end{align*} 

For a fixed $v$,  the number of sets of $5$ vertices that can be neighbors of $v$ is $\leq (2n-1)^{\frac{5}{4} \ell}$.  Thus for a fixed $v$,
\begin{align*}
& \mathbb{P} (\exists u_1,...,u_5 \in V u_i \neq u_j,   \forall 1 \leq i \leq 5,  m^1 (\lbrace u_i,v \rbrace) \geq 2) \leq \\
& O( (2n-1)^{\frac{5}{4} \ell} (2n-1)^{-\frac{5}{3}  \ell}) = O((2n-1)^{-\frac{5}{12} \ell }).
\end{align*} 
Taking a union bound on all $v \in V$ ($\vert V \vert = 4n (2n-1)^{\frac{\ell}{4} -1}$) shows that the probability that there exists $v$ with at least $5$ neighbors with a double edge is $O ((2n-1)^{-\frac{1}{6} \ell})$ and in particular tends to $0$ with $\ell$.
\end{proof}

\begin{lemma}
\label{lemma: spectral bound}
Let $\frac{1}{3} \geq d > \frac{1}{4}$ and  $\Gamma \in \mathcal{B} (n,\ell,\varrho)$ with $\varrho (\ell)  = (2n-1)^{-(1-d) \ell}$ with $4 \mid \ell$.  
For every $1 \leq i \leq 4$ and every $s, s' \in S,  s \neq s'$, it holds a.a.s.  that $(V_{s,s'}^i, E_{s,s'}^i)$ is connected and $\lambda ((V_{s,s'}^i, E_{s,s'}^i)) \leq  \frac{1}{\ell^{10}}$.
\end{lemma}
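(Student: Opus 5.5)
We treat $i=1$; the cases $i=2,3,4$ are identical after relabelling which positions of the relator $w_0^{-1}$ and $w_1$ occupy. Fix $s\neq s'$. Let $B$ be the $V_0^s\times V_1^{s'}$ biadjacency matrix of $(V^1_{s,s'},E^1_{s,s'})$, with entry
$$B_{w_0,w_1}=m^1(\lbrace (0,w_0),(1,w_1)\rbrace),$$
the number of relators in $R$ with prefix $w_0^{-1}w_1$.

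The key structural observation is that distinct pairs $(w_0,w_1)$ correspond to disjoint sets of candidate relators, since a relator has a unique prefix of length $\frac{\ell}{2}$. Hence the entries of $B$ are independent random variables. By \Cref{prop: counting words}, each entry is distributed as
$$B\Big(\frac{(2n-1)^{\ell/2+1}}{2n}+O(1),\ \varrho(\ell)\Big).$$
So its mean is $p=p_0+O(\varrho(\ell))$ for a constant $p_0$ not depending on the pair, and $p_0\,|V_1^{s'}|=D$ up to a factor $1+O((2n-1)^{-\ell/2})$. By \Cref{lemma: double edges}, a.a.s. every entry is at most $3$, so we may work with the entries truncated at $3$. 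The truncation changes $\mathbb{E}B$ by an exponentially small amount, and it does not change $B$ on the a.a.s. event.

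The plan has three steps.

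\emph{Step 1: concentration of $B$ around its mean.} Bound $\Vert B-\mathbb{E}B\Vert$ using the matrix Bernstein inequality applied to the Hermitian dilation of $B-\mathbb{E}B$. This is a sum of independent, centred, rank-two matrices of norm at most $3$, with variance proxy $O(D)$. Since $|V_{s,s'}|$ is $e^{O(\ell)}$, this gives a.a.s.
$$\Vert B-\mathbb{E}B\Vert\le C\big(\sqrt{D\ell}+\ell\big).$$
The same estimate would follow from a Feige--Ofek type bound for random bipartite graphs whose expected degree is much larger than $\log|V|$, which holds here because $D$ is exponential in $\ell$.

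\emph{Step 2: the mean matrix is essentially rank one.} Write $\mathbb{E}B=p_0J+E'$, where $J$ is the all-ones matrix and the entries of $E'$ are $O(\varrho(\ell))$. Then
$$\Vert E'\Vert\le O(\varrho(\ell))\,|V_0^s|^{1/2}|V_1^{s'}|^{1/2},$$
which is exponentially smaller than $D$. By Weyl's inequality for singular values, $\sigma_1(B)=D(1+o(1))$ and
$$\sigma_2(B)\le C\sqrt{D\ell}+\Vert E'\Vert.$$

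\emph{Step 3: normalisation.} For a bipartite graph, $M$ acting on functions orthogonal to $\mathbbm{1}$ is controlled by the singular values of the normalised biadjacency matrix $\Delta_0^{-1/2}B\Delta_1^{-1/2}$, where $\Delta_0,\Delta_1$ are the diagonal degree matrices. The second largest eigenvalue of $M$ is exactly the second singular value of this matrix. By \Cref{lemma: degree reg}, a.a.s. all degrees lie in $[D-D^{3/4},D+D^{3/4}]$. Therefore
$$\big\Vert \Delta_0^{-1/2}B\Delta_1^{-1/2}-D^{-1}B\big\Vert=O\big(D^{-1/4}\big)\,\Vert B\Vert/D=O\big(D^{-1/4}\big).$$
Combining with Step 2,
$$\lambda\big((V^1_{s,s'},E^1_{s,s'})\big)\le O\big(\sqrt{\ell/D}\big)+O\big(D^{-1/4}\big),$$
which is exponentially small in $\ell$ and hence at most $\ell^{-10}$ for large $\ell$. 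Connectivity follows from the same computation: the second singular value is strictly less than $1$, so the eigenvalue $1$ of $M$ is simple. A union bound over the $4\cdot 2n(2n-1)$ choices of $i,s,s'$ finishes the proof.

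I expect the main obstacle to be Step 1. The difficulty is setting up the matrix concentration cleanly for a multigraph whose entries are binomial rather than Bernoulli, and checking that the relevant expected degree is $D$ in both directions, i.e.\ that row sums and column sums are both about $D$. For $i=2,3,4$, the extra care is to verify that each relator contributes to exactly one pair in $E^i$, so that independence of the entries still holds.
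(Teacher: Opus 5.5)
Your proposal is correct, but it takes a genuinely different route from the paper.

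\textbf{The paper's route.} The paper never analyses the multigraph's biadjacency matrix directly. Instead it:
\begin{enumerate}
\item couples $R$ with a subfamily $\mathcal{R}_\ell'$ in which every pair $(w_0,w_1)$ has exactly $C_{\min}$ candidate relators (the $O(1)$ discarded candidates per pair are a.a.s.\ never selected, since $C\vert V_0^s\vert\vert V_1^{s'}\vert\varrho(\ell)\to 0$);
\item collapses multiple edges to obtain a genuine bipartite Erd\H{o}s--R\'{e}nyi graph $G_1$ with independent edges of equal probability $p(\ell)$;
\item absorbs the collapsed remainder $G_2$, whose degree is at most $8$ by \Cref{lemma: double edges}, via \Cref{prop: two subgraphs};
\item quotes Ashcroft's Theorem A for the spectral gap of $G_1$, from which connectivity of $G_1$ is also implicitly taken.
\end{enumerate}

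\textbf{Your route.} You use directly that the entries of the biadjacency matrix are independent, because distinct pairs have disjoint sets of candidate relators. You prove the concentration yourself with matrix Bernstein on the truncated matrix. Here \Cref{lemma: double edges} is needed only to justify the truncation, not to control a collapsed remainder. You absorb the $O(1)$ non-uniformity of the means into $E'$ via Weyl's inequality, in place of the $C_{\min}$ coupling. You handle the degree normalisation explicitly through \Cref{lemma: degree reg}.

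\textbf{What each buys.} The paper gets the sharp rate $O\big((p(\ell)(2n-1)^{\ell/4-1})^{-1/2}\big)$, at the cost of several reduction steps and an external random-graph theorem. Your argument is self-contained apart from a standard matrix inequality and works with the multigraph as it is. It also makes connectivity an explicit consequence ($\sigma_2<1$ forces the eigenvalue $1$ to be simple). The price is a $\sqrt{\ell}$ loss from the dimension factor in matrix Bernstein and an extra $O(D^{-1/4})$ term from degree fluctuations. Both are harmless here because $D$ is exponential in $\ell$.
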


\begin{proof}
We will prove the assertion for $i=1$ (the other cases are similar).  

Fix some $s,s' \in S,  s \neq s'$.   Our plan is to decompose $(V_{s,s'}^1, E_{s,s'}^1)$ into two graphs - one that is a bipartite Erd\H{o}s--R\'{e}nyi random graph (and thus has a good bound on the spectrum) and the other that is negligible (and thus does not affect the spectrum significantly). 

For $v = (0,w_0) \in V_0^s$ and $u = (1,w_1) \in V_1^{s'}$,  we denote the  number of cyclically reduced words of length $\ell$ with the prefix $w_0^{-1} w_1$ by $C_{v,u}$.  By \Cref{prop: counting words},  $C_{v,u} = \frac{(2n-1)^{\frac{\ell}{2}+1}}{2n} + O (1)$ for every such $v,u$.  Define $C_{\min} = \min_{v \in V_0^s,  u \in V_1^{s'}} C_{v,u}$ and note that there is a constant $C$ (that does not grow with $\ell$) such that $C_{v,u} - C_{\min}  \leq C$ for every $v,u$.  Let $\mathcal{R}_{\ell}$ be the set of all the cyclically reduced words of length $\ell$.  Choose $\mathcal{R}_{\ell} ' \subseteq \mathcal{R}_{\ell}$ to be a set of relations such that for every $v = (0,w_0) \in V_0^s$ and $u = (1,w_1) \in V_1^{s'}$,  the number of words in $\mathcal{R}_{\ell} '$ with a prefix $w_0^{-1} w_1$ is exactly $C_{\min}$.

Now let $(V_{s,s'}^1, E_{s,s'}^1)$ be the graph defined above and let $((V_{s,s'}^1)', (E_{s,s'}^1)')$ be its sub-graph with edges only coming from chosen words in $\mathcal{R}_\ell'$.

We will show that,  a.a.s. ,  that  $(V_{s,s'}^1, E_{s,s'}^1) =((V_{s,s'}^1)', (E_{s,s'}^1)')$.  Indeed,  
\begin{align*}
\mathbb{P} ((V_{s,s'}^1, E_{s,s'}^1) \neq ((V_{s,s'}^1)', (E_{s,s'}^1)') \leq C \vert V_0^s \vert \vert V_1^{s '} \vert  \varrho (\ell) \leq \\
C (2n-1)^{\frac{\ell}{2}} (2n-1)^{-(1-d) \ell} \leq^{d \leq \frac{1}{3}}  C (2n-1)^{- \frac{1}{6} \ell},
\end{align*}
and thus this probability tends to $0$ with $\ell$.  Denote $(m_{s,s'}^{1})'$ to be the degree function of $((V_{s,s'}^1)', (E_{s,s'}^1)')$.

We further define 
$G_1 = (\mathcal{V}_1,  \mathcal{E}_1)$ and $G_2 = (\mathcal{V}_2,  \mathcal{E}_2)$ with degree functions $m_1,  m_2$ defined as follows: $\mathcal{V}_1 = \mathcal{V}_2 = V_{s,s'}^1$ and for every $u,v \in V_{s,s'}$,  
$$m_1 (\lbrace u, v \rbrace) = 
\begin{cases}
1 & \lbrace u, v \rbrace \in (E_{s,s'}^1)' \\
0 & \lbrace u, v \rbrace \notin (E_{s,s'}^1)'
\end{cases},$$
and $m_2 (\lbrace u, v \rbrace) = (m^1_{s,s'})' (\lbrace u, v \rbrace) - m_1 (\lbrace u, v \rbrace) $. 

In other words,  $G_1 $ is the graph $((V_{s,s'}^1)', (E_{s,s'}^1)')$ after double edges (or triple edges) are ``collapsed'' to a single edge and $G_2$ consists of the the edge we removed in this ``collapse''.

We note that by \Cref{lemma: double edges} it holds a.a.s. that for every $v \in V_{s,s'}$,  $ m_2 (v) \leq  8$ (at most $4$ triple edges for a vertex).  We showed in \Cref{lemma: degree reg} that $m^1_{s,s'} (v) = \Omega (\frac{(2n-1)^{(d - \frac{1}{4}) \ell}}{2n})$ (and thus the same holds for $(m^1_{s,s')'}$)  a.a.s.  and thus by \Cref{prop: two subgraphs} 
$$\lambda ((V_{s,s'}^i, E_{s,s'}^i)) =\lambda (((V_{s,s'}^i)', (E_{s,s'}^i)')) \leq O \left(\frac{1}{(2n-1)^{(d - \frac{1}{4}) \ell}} \right) + \lambda ( (\mathcal{V}_1,  \mathcal{E}_1)).$$

The idea is that $G_1$ is a bipartite Erd\H{o}s--R\'{e}nyi random graph.  Indeed,  given $v = (0,w_0) \in V_0^s$ and $u = (1,w_1) \in V_1^{s'}$,  the number of cyclically reduced words of length $\ell$ with the prefix $w_0^{-1} w_1$ in $\mathcal{R}_\ell'$ is $C_{\min} = \Omega (\frac{(2n-1)^{\frac{\ell}{2}+1}}{2n})$.  The vertices $v,u$ are connected by an edge in $G_1$ with probability
$$p (\ell) = 1- (1- \varrho (\ell))^{C_{\min}} .$$
These edge-indicator variables are mutually independent for distinct pairs of vertices on opposite sides.

We note that $C_{\min} \sim \frac{(2n-1)^{\frac{\ell}{2}+1}}{2n}$ and thus $C_{\min}  \varrho (\ell) \rightarrow 0$ and 
$$p (\ell) = 1- (1- \varrho (\ell))^{C_{\min}} \sim C_{\min}  \varrho (\ell).$$

The each side has  $(2n-1)^{\frac{\ell}{4}-1}$ vertices and thus by \cite[Theorem A]{Ash1},  it holds a.a.s. that
$$\lambda ( (\mathcal{V}_1,  \mathcal{E}_1)) \leq O \left(\frac{1}{\sqrt{p (\ell)  (2n-1)^{\frac{\ell}{4}-1}}} \right) = O \left(\frac{1}{(2n-1)^{(d- \frac{1}{4}) \frac{\ell}{2}}} \right).$$
Thus for a sufficiently large $\ell$,  $\lambda ( (\mathcal{V}_1,  \mathcal{E}_1))  \leq \frac{1}{\ell^{10}}$.

We note that by union bound,  this spectral bound holds a.a.s. for all the pairs $s,s' \in S,  s \neq s'$ (their number does not grow with $\ell$).
\end{proof}

\begin{theorem}
For $\frac{1}{4} < d \leq \frac{1}{3}$ and $\ell$ divisible by $4$,  $\Gamma \in \mathcal{B} (n,\ell,\varrho)$ has property (T) a.a.s.  for $\varrho (\ell)  = (2n-1)^{-(1-d) \ell}$.
\end{theorem}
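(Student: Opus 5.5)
The plan is to verify the hypotheses of \Cref{thm: sufficient cond} with high probability and conclude via its final clause. Fix $\frac14<d\le\frac13$ and let $\ell\to\infty$ through multiples of $4$. Set
$$D=\frac{(2n-1)^{(d-\frac14)\ell}}{2n},\qquad \mu=\ell^{-10},\qquad \lambda=\ell^{-10}.$$
By \Cref{lemma: degree reg}, a.a.s.\ for every $1\le i\le 4$, every $s\neq s'$ and every $v\in V^i_{s,s'}$ we have $|m^i_{s,s'}(v)-D|\le \ell^{-10}D=\mu D$. This is the first bullet of \Cref{thm: sufficient cond}. By \Cref{lemma: spectral bound}, a.a.s.\ each $(V^i_{s,s'},E^i_{s,s'})$ is connected with $\lambda((V^i_{s,s'},E^i_{s,s'}))\le \ell^{-10}$. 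This is the second bullet. Both statements involve only $4\cdot 2n(2n-1)$ events, a number independent of $\ell$, so a union bound gives that all of them hold simultaneously a.a.s.

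Next I apply the last clause of \Cref{thm: sufficient cond}. Let $\varepsilon_0=\varepsilon_0(n)>0$ be the constant given there. For $\ell$ large enough that $\ell^{8}\varepsilon_0>1$, we get $\ell^{-2}\varepsilon_0>\ell^{-10}=\mu=\lambda$. Hence $\Gamma$ has property (T) on the a.a.s.\ event above, so the probability that $\Gamma$ has (T) tends to $1$ along $4\mid\ell$.

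The main point to double-check is that the conclusion of \Cref{thm: sufficient cond} really follows with $\varepsilon_0$ independent of $\ell$. Concretely, translating (1)--(4) into the parameters of \Cref{thm: main criterion} gives
$$\varepsilon_1=\frac{2\mu\ell}{1-\mu},\qquad \varepsilon_2=\frac{2\mu\ell^2}{1-\mu},\qquad \varepsilon_3=\frac{2\mu}{1+\mu},\qquad \varepsilon_4=\ell^2\Bigl(\lambda+\frac{16\mu^2}{(1-\mu)^4}\Bigr).$$
With $\mu,\lambda=\ell^{-10}$, every one of these is $O(\ell^{-7})$. Every error term on the right side of the explicit inequality in \Cref{thm: main criterion} is then $o(1)$, so it is eventually smaller than $\frac1{2n-1}$. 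This holds even though some terms carry factors like $\frac n4$, because the $\varepsilon_j$ themselves tend to $0$.

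A second point is that \Cref{lemma: degree reg} and \Cref{lemma: spectral bound} were stated with the bounds $\mu D$ and $\ell^{-10}$ precisely so that they match this requirement; I would simply cite them. Finally, one should note that the graphs $G^i$ and $V_{s,s'}$ are built from the random relator set $R$, which is exactly what those lemmas analyze, so no further probabilistic input is needed.
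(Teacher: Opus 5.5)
Your proposal is correct and is essentially the paper's proof: \Cref{lemma: degree reg} and \Cref{lemma: spectral bound} give $\mu,\lambda\le \ell^{-10}$ a.a.s.\ (a union bound over the boundedly many triples $(i,s,s')$ makes them hold simultaneously), and the last clause of \Cref{thm: sufficient cond} then applies once $\ell^{-2}\varepsilon_0>\ell^{-10}$. Your explicit translation $\varepsilon_1=\frac{2\mu\ell}{1-\mu}$, $\varepsilon_2=\frac{2\mu\ell^2}{1-\mu}$, $\varepsilon_3=\frac{2\mu}{1+\mu}$, $\varepsilon_4=\ell^2\bigl(\lambda+\frac{16\mu^2}{(1-\mu)^4}\bigr)$ into the hypotheses of \Cref{thm: main criterion} is accurate, and it spells out the comparison that the paper leaves to the reader.
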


\begin{proof}
We prove this by showing that the conditions of \Cref{thm: sufficient cond} hold for $\Gamma$ a.a.s. ,  namely that $\mu,  \lambda$ given in \Cref{thm: sufficient cond} are a.a.s.  sufficiently small to yield property (T) by \Cref{thm: sufficient cond}.  Namely, it is enough to show that for every $\varepsilon_0 >0$ it holds that $\ell^{-2} \varepsilon_0 > \mu, \lambda$ for any sufficiently large $\ell$.

By \Cref{lemma: degree reg} it holds a.a.s. that $\mu \leq \frac{1}{\ell^{10}}$ and thus for every $\varepsilon_0 >0$,  if $\ell$ is large enough then $\mu < \ell^{-2} \varepsilon_0$. 

By \Cref{lemma: spectral bound} it holds a.a.s. that $\lambda \leq \frac{1}{\ell^{10}}$ and thus for every $\varepsilon_0 >0$,  if $\ell$ is large enough then $\lambda < \ell^{-2} \varepsilon_0$. 
\end{proof}

\begin{theorem}
For every $d  \in (\frac{1}{4},1)$ and $\ell$ divisible by $4$,  property (T) holds a.a.s.  for $\mathcal{D} (n,\ell,d)$.
\end{theorem}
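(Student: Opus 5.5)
The plan is to combine the binomial-model result just proved with \Cref{prop: DrutuM} and with monotonicity in the density. Set $d_0 = \frac{1}{4}$. By \Cref{prop: DrutuM}, it is enough to show that for every $d > \frac{1}{4}$, $\Gamma \in \mathcal{B}(n,\ell,\varrho)$ with $\varrho(\ell) = (2n-1)^{-(1-d)\ell}$ has property (T) a.a.s. for $4 \mid \ell$. The argument of \Cite{DrutuM} is a coupling/monotonicity argument between the two models. Nothing in it depends on $\ell$ ranging over all integers, so it applies verbatim along the subsequence $4 \mid \ell$. I would remark on this explicitly, since the hypotheses and conclusions are the relaxed a.a.s. statements of the preceding remark.

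For $\frac{1}{4} < d \leq \frac{1}{3}$ the binomial statement is exactly the preceding theorem. For $d > \frac{1}{3}$, I would use that property (T) passes to quotients and is therefore monotone under adding relators. Fix $d' \in (\frac{1}{4}, \frac{1}{3}]$ with $d' < d$. Then $\varrho_{d'}(\ell) \le \varrho_d(\ell)$, so there is a standard monotone coupling of $\mathcal{B}(n,\ell,\varrho_{d'})$ and $\mathcal{B}(n,\ell,\varrho_d)$. Concretely, let $R_{d'} \subseteq R_d$ by selecting each word via a single uniform variable $U_r$ and including it when $U_r \le \varrho$. Under this coupling, $\Gamma_d$ is a quotient of $\Gamma_{d'}$. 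Since $\Gamma_{d'}$ has (T) a.a.s., so does $\Gamma_d$, and property (T) holds a.a.s. in the binomial model for every $d > \frac{1}{4}$.

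Alternatively, one could simply cite the $d > \frac{1}{3}$ results of \.{Z}uk, Kotowski--Kotowski and Ashcroft \cite{Zuk, KK, Ash23} for that range. The coupling argument is self-contained, so I would prefer it.

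Applying \Cref{prop: DrutuM} with $d_0 = \frac14$ then yields property (T) a.a.s. in $\mathcal{D}(n,\ell,d)$ for every $d \in (\frac{1}{4},1)$ and $4 \mid \ell$. This is the claimed statement, and hence the main theorem with $\ell = 4k$.

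The only delicate point is the applicability of \Cref{prop: DrutuM} along the subsequence $4\mid\ell$ and for densities near $1$. For instance, at $d$ close to $1$ the density-model group may be trivial, but that is harmless because trivial groups have (T). I would check that the transfer in \cite{DrutuM} works at each fixed $\ell$, comparing $\mathcal{D}(n,\ell,d)$ with $\mathcal{B}(n,\ell,\varrho_{d''})$ for some $d'' < d$ via concentration of $|R|$ and monotonicity. If it does, restricting to $4\mid\ell$ is immediate.
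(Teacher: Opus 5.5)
Your proposal is correct and follows the paper's proof: property (T) in the binomial model for $\frac14<d\le\frac13$ comes from the preceding theorem, it extends to all $d>\frac14$ because adding relators gives quotients and (T) passes to quotients, and \Cref{prop: DrutuM} then transfers the result to the density model. You make explicit two points the paper leaves implicit: the monotone coupling $R_{d'}\subseteq R_d$, and the fact that the transfer works along the subsequence $4\mid\ell$.
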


\begin{proof}
The proof above is for $\frac{1}{4} < d \leq \frac{1}{3}$ binomial model ,  but since property (T) is preserved by quotients (i.e., by adding relations),  it shows property (T) for every $d > \frac{1}{4}$ in the binomial model.  The result passes also to the density model by \Cref{prop: DrutuM}.
\end{proof}

\bibliographystyle{alpha}
\bibliography{bibl}
\end{document}